\documentclass{hha}

\usepackage{amsmath,amsbsy,amsfonts,amssymb,
stmaryrd,amsthm,mathrsfs, amscd, tikz-cd}
\usepackage{dsfont}
\usepackage[all]{xy}
\usepackage{mathtools}
\usepackage{subcaption}
\usepackage{dutchcal}
\usepackage[numbers,sort&compress]{natbib}
\usepackage{xcolor}
\colorlet{darkgreen}{green!40!black}
\usepackage{ifpdf}
\ifpdf \usepackage[pdftex]{hyperref}
\else \usepackage[ps2pdf]{hyperref} 
      \usepackage{breakurl}
\fi
\hypersetup{colorlinks=true,urlcolor=blue,citecolor=darkgreen,
linkcolor=darkgreen,linktocpage,bookmarksnumbered,unicode}

\newcommand\Sh{\operatorname{Sh}}

\newcommand\Hom{\operatorname{Hom}}

\newcommand\Img{\operatorname{Im}}

\newcommand\Id{\operatorname{Id}}

\newcommand\Dig{\operatorname{Dig}}
\newcommand\hDig{\operatorname{hDig}}
\newcommand\BDig{\operatorname{Dig^{*}}}
\newcommand\hBDig{\operatorname{hDig^{*}}}

\newcommand\Acal{\mathcal A}

\newcommand\Qcal{\mathcal Q}
\newcommand\Pcal{\mathcal P}

\newcommand\Kbb{\mathbb{K}}

\newcommand\Rbb{\mathbb{R}}
\newcommand\Zbb{\mathbb{Z}}

\makeatletter

\newcommand{\Rnum}[1]{\expandafter\@slowromancap\romannumeral #1@}

\begin{document}

\title{Algebra of Iterated Path Integrals on Digraphs}


\author{Shing-Tung Yau}

\email{styau@tsinghua.edu.cn}

\address{Yau Mathematical Sciences Center, Tsinghua
University, Beijing 100084, China\\
Beijing Institute of Mathematical Sciences and Applications, 101408, China}


\author{Mengmeng Zhang}
\email{mengmengzhang@bimsa.cn}
\address{Beijing Key Laboratory of Topological Statistics and Applications for Complex Systems, Beijing Institute of Mathematical Sciences and Applications, Beijing 101408, China}
\thanks{The second author was supported by the Start-up Research Fund at BIMSA, Beijing Postdoctoral Research Foundation, the Interdisciplinary Fund of SIMIS.}
\author{Yunpeng Zi}
\email{ziypmath\_2024@email.sdu.edu.cn}
\address{{School of Mathematics}, Shandong University, Jinan, Shandong 250100, China}
\thanks{The third author was supported by the Natural Science Foundation of Shandong Province China Grant ZR2025QC1476, {the National Natural Science Foundation of China Grant 12601125}}

\classification{05C20, 16T05, 53A70, 55P35.}

\keywords{Iterated Path Integral, Digraph, Fundamental Group, Hopf Algebra}

\begin{abstract}
In this paper, we extend the iterated path integrals from smooth manifolds to digraphs and develop the associated algebraic and geometric structures. Iterated path integrals on a digraph naturally give rise to the iterated path algebra and the iterated loop algebra, both defined as quotient algebras of a shuffle algebra, with the latter carrying a canonical Hopf algebra structure.

We construct a non-degenerate pairing between elementarily equivalent classes of loops on a digraph and the iterated loop algebra. By restricting to iterated path integrals that are invariant under $C_\partial$-homotopy, a distinguished subalgebra is obtained which, under this pairing, corresponds to the group algebra of the fundamental group. We further show that this subalgebra is a homotopy invariant and forms a Hopf algebra with involutive antipode.

\end{abstract}


\received{Month Day, Year}   
\revised{Month Day, Year}    
\published{Month Day, Year}  
\submitted{Henry Adams}      
\volumeyear{*} 
\volumenumber{*} 
\issuenumber{*}   
\startpage{1}     
\articlenumber{*} 

\owner{International Press}

\maketitle


\section{Introduction}
Directed graphs (digraphs) serve as fundamental models for numerous complex systems in the real world.
{A framework for the homology of digraphs was introduced by Grigor'yan, Lin, Muranov, and Yau~\cite{grigor2012homologies}, providing effective tools for detecting and analyzing connectivity patterns and cycles in networks, as well as for studying complex disease systems, molecular isomers, and related applications~\cite{chen2023path,chowdhury2019path,wu2023metabolomic}.} 
{Subsequently, Asao, Hepworth, and others
introduced several homology theories that generalize path homology
\cite{asao2023magnitude, caputi2024hochshild, hepworthbigraded, hepworth2025reachability}, while Muranov, Tang et al. developed
other related homological theories~\cite{Yuricubical, Tangcelluar}. Furthermore, the homotopy theory of digraphs~\cite{grigor2014homotopy} also introduced by Grigor'yan et al., and was subsequently developed from both digraph and cubical set viewpoints~\cite{carranza2024cubical, li2024homotopy, kishimoto2025fundamental,sergeihomotopy}. These
developments have attracted attention from mathematicians
and researchers in related interdisciplinary fields, contributing to
the development of a new area of study known as GLMY theory~\cite{chen2023path,carranza2024cofibration, chowdhury2019path,chowdhurypersis,chowdhury2022path,di2024path,fu2024path,li2024homotopy,wu2023metabolomic}.}

In 2012, Grigor'yan, Lin, Muranov, and Yau defined the path homology and cohomology theories using paths and differential forms on digraphs, proving that both are homotopy invariants. Subsequently, in 2014, they defined the fundamental group of a based digraph as the $C_{\partial}$-homotopy classes of loops and developed a corresponding homotopy theory for digraphs~\cite{grigor2014homotopy}. In 2015, Grigor'yan et al. further provided a combinatorial definition of equivalence for the fundamental group and proved that its abelianization is isomorphic to the first path homology group~\cite{grigor2018fundamental}. The cohomology theory of digraphs was also further studied in~\cite{grigor2015cohomology}.
{These developments of cohomology and homotopy theories on digraphs naturally motivate us to seek a discrete analogue of Chen's theory of iterated path integrals on digraphs.
}

The iterated path integral is a classical tool in calculus that reduces multiple integrals to single integrals. In 1957, K.T. Chen introduced iterated path integrals on the path space of a smooth manifold in \cite{chen1957integration} and \cite{chen1958integration}. He subsequently constructed a Hopf algebra with an involutive antipode, generated by homotopy-invariant iterated path integrals, and showed that this algebra is homotopy invariant and dual to the fundamental group $\pi_1$ \cite{chen1971algebras}. Furthermore, the iterated path integral and homotopy theory were deeply explored by K.T. Chen, who called it de Rham homotopy theory, establishing a connection between de Rham theory, the loop space of a differentiable space, and the homotopy groups. A more recent work by R. Hain pointed out in \cite{hain1987geometry} and \cite{hain1985iterated} that the iterated path integral leads to a mixed Hodge structure on the homotopy group of a smooth complex algebraic variety.

Inspired by de Rham homotopy theory, this paper aims to introduce a new homotopy invariant for {finite} digraphs, based on the cochain complex and fundamental group of digraphs developed by Grigor'yan \emph{et al.} First, we generalize the notion of iterated path integrals to finite path maps $\alpha$ and $1$-forms $\omega_1,\ldots,\omega_r$ on digraphs, and denote such an integral by
\begin{equation*}
    \int_{\alpha}\omega_1\cdots\omega_r.
\end{equation*}
Collecting all iterated path integrals yields an algebra whose multiplication is given by the shuffle product. Moreover, the iterated path integral along the inverse of a path map equals the iterated path integral taken in reverse order, up to a sign $\pm 1$.
To this end, we introduce the notions of indefinite iterated path integrals and indefinite iterated loop integrals by allowing the finite path map $\alpha$ to remain undetermined. The algebras $\Pcal(G)$ and $\Qcal(G)$ are then generated by all such indefinite iterated path integrals and loop integrals, respectively, with unit $1$ under the shuffle multiplication.
They are quotients of the shuffle algebra. Moreover, $\Qcal(G)$ is a Hopf algebra with {an involutive antipode}, and the proof depends on the following well-defined bilinear pairing.

Fix a vertex $x \in V(G)$. Let $P_x(G)$ (resp.\ $L_x(G)$) denote the path space (resp.\ loop space) of a digraph $G$, defined as the $\mathbb{K}$-vector space generated by all path maps starting at $x$ (resp.\ all loop maps based at $x$).

There exists a natural bilinear pairing between the iterated path algebra $\mathcal{P}_x(G)$, which obtained as the restriction of $\mathcal{P}(G)$ at $x$, and the path space $P_x(G)$: 
\begin{equation}\label{eq:bilinear-form-ori}
\begin{aligned}
(-,-): \mathcal{P}_x(G)\times P_x(G) &\longrightarrow \mathbb{K}, \\
\left(\int_x \omega_1 \cdots \omega_r,\; \alpha\right) &\longmapsto \int_\alpha \omega_1 \cdots \omega_r.
\end{aligned}
\end{equation}

Moreover, this pairing naturally restricts to a bilinear pairing between the iterated loop algebra $\mathcal{Q}_x(G)$, regarded as a subalgebra of $\mathcal{P}_x(G)$, and the loop space $L_x(G)$:
\begin{equation}\label{eq:bilinear-form-loop}
\begin{aligned}
(-,-): \mathcal{Q}_x(G)\times L_x(G) &\longrightarrow \mathbb{K}, \\
\left(\int_x \omega_1 \cdots \omega_r,\; \alpha\right) &\longmapsto \oint_{\alpha} \omega_1 \cdots \omega_r.
\end{aligned}
\end{equation}

We show that the kernels of these pairings consist precisely of those path maps that are elementarily equivalent to the trivial path. Here, the equivalence relation is generated by two elementary moves: collapsing trivial arrows and inserting or removing backtracking arrows along finite paths.

In particular, we obtain the following characterization theorem.
\begin{theorem}
{Let $\alpha,\beta \in P_{x,y}(G)$}. Two path maps $\alpha$ and $\beta$ are elementarily equivalent if and only if
\[
\int_{\alpha} \omega_1 \cdots \omega_r
=
\int_{\beta} \omega_1 \cdots \omega_r
\]
for all $1$-forms $\omega_1,\ldots,\omega_r$ with $r \geq 1$.
\end{theorem}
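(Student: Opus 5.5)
We work throughout with the discrete Chen calculus set up earlier in the paper. We write a path map $\alpha\colon I_n\to G$ as a sequence of steps. Each step is a trivial arrow, an arrow $a\to b$ traversed forwards, or an arrow traversed backwards. A $1$-form $\omega$ assigns to each step the value $0$, $\omega^{ab}$ or $-\omega^{ab}$, respectively. We use the concatenation formula
\[
\int_{\alpha\beta}\omega_1\cdots\omega_r=\sum_{k=0}^{r}\int_\alpha\omega_1\cdots\omega_k\int_\beta\omega_{k+1}\cdots\omega_r
\]
and the inversion formula $\int_{\alpha^{-1}}\omega_1\cdots\omega_r=(-1)^r\int_\alpha\omega_r\cdots\omega_1$, both established earlier. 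A compact way to organise everything is the generating series
\[
T(\alpha)=\sum_{r\ge0}\sum_{i_1,\dots,i_r}\Bigl(\int_\alpha e^{i_1}\cdots e^{i_r}\Bigr)X_{i_1}\cdots X_{i_r}
\]
in the completed free associative algebra $\widehat{T}$ on noncommuting variables $X_a$, one for each arrow $a$. Here $\{e^a\}$ is the basis of $1$-forms dual to the arrows. Since every $1$-form is a linear combination of the $e^a$ and the integral is multilinear, agreement of all iterated integrals is equivalent to $T(\alpha)=T(\beta)$. The concatenation formula says $T(\alpha\beta)=T(\alpha)T(\beta)$, and the integral over a single step along an arrow $a$ is the exponential $\exp(\pm X_a)$; a trivial step gives $1$. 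Hence $T(\alpha)=\prod_{\text{steps}}\exp(\pm X_{a_j})$.

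\emph{Necessity.} It suffices to check invariance under the two elementary moves, because $T$ is multiplicative under concatenation. A trivial step contributes the factor $1$, so inserting or collapsing it does not change $T$. A backtracking pair $a$ followed by $a^{-1}$ contributes $\exp(X_a)\exp(-X_a)=1$. Equivalently, by the concatenation and inversion formulas applied to $\gamma\gamma^{-1}$, this is the antipode identity for the shuffle algebra. Hence elementarily equivalent path maps have equal iterated integrals.

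\emph{Sufficiency.} First reduce to loops. Suppose $\alpha$ and $\beta$ have equal integrals. By the two formulas, $T(\alpha\beta^{-1})=T(\alpha)T(\beta)^{-1}=1$. Moreover, $\alpha\sim\beta$ if and only if the loop $\alpha\beta^{-1}$ is elementarily equivalent to the trivial path at $x$: append $\beta$ and cancel $\beta^{-1}\beta$ by backtracking moves. So it suffices to show that a loop $\gamma$ with $T(\gamma)=1$ is elementarily trivial.

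Apply elementary moves to put $\gamma$ in reduced form, with no trivial steps and no consecutive step followed by its reverse. Its sequence of oriented arrows is then a reduced word $w=a_{1}^{\epsilon_1}\cdots a_{m}^{\epsilon_m}$ in the free group $F$ on the set of arrows. Note that $a\to b$ and $b\to a$ are distinct arrows and hence distinct generators. The map $T$ then factors through the homomorphism $\mu\colon F\to\widehat{T}^{\times}$ given by $a\mapsto\exp(X_a)$. We must show $\mu$ is injective, so that $T(\gamma)=1$ forces $m=0$.

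This injectivity is the main obstacle. We intend to deduce it from the Magnus embedding $a\mapsto 1+X_a$, which is injective by the classical theorem. Consider the continuous algebra automorphism $\phi$ of $\widehat{T}$ determined by $X_a\mapsto \exp(X_a)-1$. It is invertible because its linear part is the identity, with inverse given by $X_a\mapsto\log(1+X_a)$. It carries $1+X_a$ to $\exp(X_a)$, so $\mu=\phi\circ(\text{Magnus})$ is injective. Alternatively, we can argue directly: for a reduced word $w\neq1$, the coefficient in $\mu(w)$ of the monomial obtained by grouping consecutive equal letters is a nonzero product of rational numbers, namely the leading terms of $\exp(\pm kX_a)$. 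It then remains to check that no other factorization of that monomial contributes a cancelling term, which is where reducedness is used.
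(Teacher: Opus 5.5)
Your proof is correct, but its key step differs from the paper's.

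\textbf{Necessity.} The paper uses Corollary~\ref{trivial} together with a direct combinatorial cancellation in Proposition~\ref{prop:iter-ele-equ}, via $\sum_k(-1)^k\binom{a}{k}=0$. Your series $T(\alpha)=\prod_j\exp(\epsilon_jX_{a_j})$ reduces this to multiplicativity of $T$ plus $\exp(X_a)\exp(-X_a)=1$.

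\textbf{Sufficiency.} You and the paper both pass to the loop $\alpha\star\beta^{-1}$ and reduce it. The paper then applies Lemma~\ref{lem:redd-path-nonzero}: a nontrivial reduced $\gamma$ with steps $\gamma_1,\dots,\gamma_n$ satisfies $\int_\gamma e^{\gamma_1}\cdots e^{\gamma_n}=\pm1$. That is your ``direct'' alternative, and here it is simpler than you anticipate. Since there are no self-loops, the same arrow never occupies two consecutive steps of a reduced path: traversing it twice in the same direction is impossible because $s(a)\neq t(a)$, and opposite directions is backtracking. So no grouping occurs, and a length count shows that the coefficient of $X_{a_1}\cdots X_{a_n}$ in $\prod_j\exp(\epsilon_jX_{a_j})$ is exactly $\prod_j\epsilon_j$.

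Your main route instead proves faithfulness of $a\mapsto e^{X_a}$ on the free group of arrows. You get it from Magnus's theorem via the substitution automorphism $X_a\mapsto e^{X_a}-1$, which is legitimate since $\mathrm{char}\,\Kbb=0$. This imports a classical theorem where a two-line count suffices, and it does not produce an explicit detecting integral. In exchange it is more conceptual. It shows that elementary equivalence classes embed in the unit group of $\widehat{T}$, a discrete Chen/Magnus-type statement. It also does not depend on the absence of self-loops or on consecutive letters being distinct, where the paper's leading-term argument would need the extra grouping analysis you mention.
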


However, iterated path integrals are not well-defined on the homotopy classes of paths, as their values depend on the choice of a specific representative. To address this issue, we isolate those elements that are invariant under $C_{\partial}$-homotopy class of path maps and thereby construct a new algebra $\pi^{1}(G)$ associated to a digraph $G$. This yields a nontrivial involutive Hopf algebra structure, which depends only on the homotopy type of the digraph.
\begin{theorem}{}
Let $\Kbb$ be a field with characteristic 0, $G^{*}=(G_0,G_1,x)$ be a based digraph and $\omega_1,\cdots,\omega_r$ be 1-forms on $G^{\ast}$. Define
\begin{itemize}
    \item $\Pcal_x(G)$ be the {$\Kbb$-algebra} generated by the iterated path integrals $\int_x\omega_1\cdots\omega_r$,
    \item $\Qcal_x(G)$ be the {$\Kbb$-algebra} generated by the iterated loop integrals $\oint_x\omega_1\cdots\omega_r$,
    \item $\pi^1_x(G)$ be the sub-$\Kbb$-algebra of $\Qcal_x(G)$ consisting of all iterated loop integrals that are independent of {$C_{\partial}$-homotopy} of loops.
\end{itemize}
 There exist {contra}variant functors:
    \begin{equation*}
    \begin{aligned}
    \Pcal\colon  \underline{\BDig}\to \underline{\mathcal{A}};&\quad G^*=(G_0,G_1,x)\mapsto \Pcal_x(G)\\
    \Qcal\colon  \underline{\BDig}\to \underline{\mathcal{H}};&\quad G^*=(G_0,G_1,x)\mapsto \Qcal_x(G)\\
          \pi^1\colon  \underline{\hBDig}\to \underline{\mathcal{H}};&\quad G^*=(G_0,G_1,x)\mapsto\pi^1_x(G),
    \end{aligned}
     \end{equation*}
     where $\underline{\BDig}$ is the category of based digraphs, $\underline{\hBDig}$ is its homotopy category, and $\underline{\mathcal{A}}$ and $\underline{\mathcal{H}}$ {are the categories} of $\mathbb{K}$-algebras and Hopf $\mathbb{K}$-algebras, respectively.
\end{theorem}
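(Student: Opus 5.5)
Functoriality will come from pullback of $1$-forms along digraph maps. For a based digraph map $f\colon G^*\to H^*=(H_0,H_1,y)$ with $f(x)=y$, the plan is as follows. First, check that the induced cochain map $f^*$ sends $1$-forms on $H$ to $1$-forms on $G$. Second, show that for every path map $\alpha$ in $G$ starting at $x$ the composite $f\circ\alpha$ is a path map in $H$ starting at $y$. Third, establish the naturality identity
\[
\int_{f\circ\alpha}\omega_1\cdots\omega_r=\int_{\alpha}f^*\omega_1\cdots f^*\omega_r .
\]
This identity is checked arrow by arrow from the definition of the iterated integral as a sum over ordered tuples of steps of $\alpha$. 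The one subtle case is a step of $\alpha$ that $f$ collapses to a vertex. On such a step every $f^*\omega_i$ vanishes, since a $1$-form evaluated on a degenerate arrow is zero, so both sides agree. We then set
\[
f^\sharp\Bigl(\int_y\omega_1\cdots\omega_r\Bigr)=\int_x f^*\omega_1\cdots f^*\omega_r .
\]
This is well defined on the quotient $\Pcal_y(H)$: if an element pairs to zero with all of $P_y(H)$, the naturality identity shows its image pairs to zero with all of $P_x(G)$, and non-degeneracy of the pairing \eqref{eq:bilinear-form-ori} then gives that the image vanishes. The map $f^\sharp$ preserves shuffle products and the unit because it acts letterwise on words. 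It respects composition, $(g\circ f)^\sharp=f^\sharp\circ g^\sharp$, and sends identities to identities, because $(g f)^*=f^*g^*$. This gives the contravariant functor $\Pcal$.

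For $\Qcal$ we restrict $f^\sharp$ to loop integrals; $f$ sends loops based at $x$ to loops based at $y$. The extra work is to show that $f^\sharp$ is a Hopf map. By duality with $L_x(G)$ via the pairing \eqref{eq:bilinear-form-loop}, the Hopf structure on $\Qcal_x(G)$ is the one built earlier in the paper:
\begin{itemize}
\item the coproduct is deconcatenation, dual to concatenation of loops;
\item the counit is evaluation on the trivial loop;
\item the antipode is reversal with sign, dual to loop inversion.
\end{itemize}
Since $f\circ(\alpha\cdot\beta)=(f\circ\alpha)\cdot(f\circ\beta)$ and $f\circ\alpha^{-1}=(f\circ\alpha)^{-1}$, the naturality identity transports each structure map. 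For the coproduct this reads $(f^\sharp\otimes f^\sharp)\Delta=\Delta f^\sharp$. It suffices to test this by pairing against all $\alpha\otimes\beta$, and by non-degeneracy equality of all such pairings forces equality of elements. Counit and antipode are handled in the same way.

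For $\pi^1$, first $f^\sharp$ maps $\pi^1_y(H)$ into $\pi^1_x(G)$. Indeed, $f$ sends $C_\partial$-homotopic loops to $C_\partial$-homotopic loops, so homotopy invariance of the integrals on $H$ pulls back to homotopy invariance on $G$. Next, $\pi^1_x(G)$ is a Hopf subalgebra of $\Qcal_x(G)$. It is closed under the shuffle product, because the product of homotopy-invariant elements evaluates to a product of invariant values (multiplicativity of the pairing on loops). It is closed under the antipode, because inversion is compatible with homotopy. For the coproduct, the pairing identifies $\Delta\theta$ with the function $(\alpha,\beta)\mapsto(\theta,\alpha\beta)$, and this function factors through $\pi_1\times\pi_1$. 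We then argue that $\Delta\theta$ lies in $\pi^1\otimes\pi^1$ by a finite-dimensionality argument on each filtration level by length $r$, using characteristic $0$ and non-degeneracy.

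The main obstacle is that $\pi^1$ must factor through the homotopy category. If $f\simeq g$ as based digraph maps, we need $f^\sharp=g^\sharp$ on $\pi^1_y(H)$. For every loop $\alpha$ at $x$, the loops $f\circ\alpha$ and $g\circ\alpha$ are $C_\partial$-homotopic; we verify this via the cylinder/one-step homotopy, reducing to a single arrow $f\to g$ or $g\to f$ and producing an explicit $C_\partial$-homotopy of shifted loops. Since elements of $\pi^1_y(H)$ are homotopy invariant, $(f^\sharp\theta,\alpha)=(\theta,f\alpha)=(\theta,g\alpha)=(g^\sharp\theta,\alpha)$ for all $\alpha$, and non-degeneracy of the loop pairing gives $f^\sharp\theta=g^\sharp\theta$. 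I expect the delicate points to be the basepoint bookkeeping in this one-step homotopy and the coproduct-closure argument above.
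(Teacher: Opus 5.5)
Your proposal is correct and follows essentially the paper's route. You pull back forms adjointly to pushing forward paths (Proposition~\ref{prop:dual-map}), transport the Hopf structure through the left non-degenerate pairing with loops, realize $\pi^1_x(G)$ as the annihilator of the kernel of $\bar L_x(G)\to\Kbb\pi_1(G,x)$, and get invariance under homotopic maps from their action on $\pi_1$.

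The differences are minor. The paper obtains coproduct-closure of $\pi^1_x(G)$ by citing Chen's Lemma~\ref{subal}; your sketch only needs the linear-algebra fact $(A\otimes V)\cap(V\otimes B)=A\otimes B$, and characteristic $0$ plays no role there. For homotopies, the paper goes through the base-change isomorphism $\gamma^*$ of Lemma~\ref{lem:pi1-homotopy-commutes}. For a basepoint-fixing homotopy this reduces to your direct argument that $f\circ\alpha\simeq_C g\circ\alpha$ via the cylinder $C_{\mathrm{id}}$.
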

{This theorem shows that $\pi^1(G)$ is a homotopy invariant of any digraph $G$ as a Hopf $\Kbb$-algebra.}

Finally, we emphasize that the constructions and results in this paper are not restricted to digraphs. They extend naturally to quivers without self-loops.

This paper is organized as {following}: we review the differential forms and homotopy theory of digraphs in Section 2. Then in Section 3, we introduce the iterated path integrals of a digraph and discuss {their} properties, where we also explore the elementary equivalence on iterated path integrals. Subsequently, several Hopf algebras {of} iterated path integrals on digraphs are studied in Section 4. Finally we define a homotopy invariant $\pi^1$ in Section 5 as a Hopf sub-algebra of $\Qcal$.
\section{Preliminaries}
In this section, we review some basic definitions and notations about  differential forms on {finite} digraphs~\cite{grigor2015cohomology} and digraph homotopy theory~\cite{grigor2014homotopy} introduced by Grigor'yan, Lin, Muranov and Yau. {And throughout this paper, all digraphs are assumed to be finite}. 

A \textit{digraph} $G$ is a pair $(G_0, G_1)$, where $G_0$ is the set of vertices and
$G_1 \subset (G_0 \times G_0) \setminus \operatorname{diag}(G_0)$ is the set of arrows, with $\operatorname{diag}(G_0) = \{(x, x) \mid x \in G_0\}$.
An arrow $(x, y) \in G_1$ is typically denoted by $x \to y$. {In this paper, we assume that digraphs have no self-loops.}  Moreover, the \textit{source} and \textit{target} maps $s, t \colon G_1 \to G_0$ are defined by
\[
s(x, y) = x, \qquad t(x, y) = y,
\]
respectively. A \emph{based digraph} $G^*$ is a pair $(G, x_0)$, where $G$ is a digraph and $x_0 \in G_0$ is the basepoint.

An important example is the \textit{line digraph}. A \textit{line digraph} $I_n$ of length $n$ is a digraph with vertex set $(I_n)_0 = \{0,1,\cdots,n\}$, and for each $0 \leq i \leq n-1$, there is exactly one arrow between $i$ and $i+1$, either $i \to i+1$ or $i \leftarrow i+1$. For example, $I_0$ consists of a single vertex, and $I_1$ consists of a single arrow.
The associated based line digraph is denoted by $I_n^* = (I_n, 0)$. Moreover, the inverse $\widehat{I}_n$ of the line digraph $I_n$ is defined by
\begin{equation*}
	i \to j \in (\widehat{I}_n)_1 \iff (n-i) \to (n-j) \in (I_n)_1.
\end{equation*}

Let $G, H$ be two digraphs. A \textit{digraph map} $f \colon   G \to H$ is a map $f \colon   G_0 \to H_0$ such that for any arrow $(x, y) \in G_1$, $(f(x), f(y)) \in H_1 \cup \operatorname{diag}(H_0)$.
And for based digraphs $G^{*}=(G,g_0)$ and $H^{*}=(H,h_0)$, a \textit{based digraph map} is a digraph map $f\colon   G\to H$ such that $f(g_0) =h_0$.
In particular, a digraph map $\alpha\colon   I_n\rightarrow G$ from line digraph $ I_n$ to digraph $G$ is called a \textit{path map}. The based path map $\alpha^{\ast}\colon   I_n^{*}\rightarrow G^{\ast}$ is called a path map from $g_0$, and called a loop if in addition $\alpha^{\ast}(0)=\alpha^{\ast}(n)$. {And it is worth noting that, although our digraphs do not contain self-loops, the image $(f(x),f(y))$ of $(x,y)\in G$ may lie in $\operatorname{diag}(H_0)$ for a digraph map $f:G\to H$. For later convenience, we refer to elements $(z,z)\in \operatorname{diag}(H_0)$ as \textit{trivial arrows} and denote them by $r_z$.
}

Several operations {are} defined on finite path maps. The \textit{inverse} $\alpha^{-1}\colon   \widehat{I}_n\to G $ of path map $\alpha$ is a digraph map defined by $\alpha^{-1}(i) = \alpha(n-i)$.  Clearly, the inverse of a loop is still a loop. {For example, the inverse of an arrow $a=x\to y$ is $a^{-1}=y\to x$. }
Given two path maps $\alpha\colon  I_n\to G$ and $\alpha'\colon  I_m\to G$, the \textit{concatenation} $\alpha\star\alpha'$ of $\alpha$ and $\alpha'$ is defined by
\begin{equation*}
\alpha \star \alpha'(i)=
\begin{cases}
0, & \text{if } \alpha(n) \neq \alpha'(0),\\[4pt]
\alpha(i), & \text{if } \alpha(n) = \alpha'(0)\ \text{and}\ 0\leq i \leq n, \\
\alpha'(i-n), & \text{if }\alpha(n) = \alpha'(0)\ \text{and}\ n+1\leq i \leq m+n.\\
\end{cases}
\end{equation*}
The \textit{cut at $[a,b]$} of $\alpha\colon  I_n\to G$, denoted by $\alpha|_{[a,b]}$, is the path map $I_{b-a}\to G$ such that $\alpha|_{[a,b]}(t)=\alpha(t+a)$. For simplicity, $\alpha|_{[a,n]}$ is written as $\alpha|_{[a:]}$.

The \textit{box product} $G\Box H$ of $G$ and $H$ is a digraph defined as {follows}: the vertices are the elements of $G_0\times H_0$, the arrow $(x_1,x'_1)\to (x_2,x'_2)$ lies in $(G\Box  H)_1$ if and only if $x_1=x_2$ and $x'_1\to x'_2\in H_1$, or $x_1\to x_2\in G_1$ and $x'_1=x'_2$. Let  $f\colon  G\to H$ be a digraph map, the \textit{direct cylinder} of $f$ is the digraph  $C_f$ with $(C_f)_0=G_0\sqcup H_0$ and $(C_f)_1=G_1\sqcup H_1\sqcup \{v\to f(v)\colon  v\in G_0\}$. The \textit{inverse cylinder} of $f$ is the digraph  $C^{-}_f$ with $(C^{-}_f)_0=G_0\sqcup H_0$ and $(C^{-}_f)_1=G_1\sqcup H_1\sqcup \{f(v)\to v:v\in G_0\}$.

\subsection{Differential Forms on Digraphs}
Let $G=(G_0,G_1)$ be a digraph, let $\Kbb$ be a fixed field of characteristic $0$, and let $n$ be a non-negative integer. An \textit{elementary $n$-path} on vertex set $G_0$ is a sequence of vertices $v_0 v_1 \cdots v_n$, where $v_i \in G_0$ for $0 \leq i \leq n$.

Denote by $\Lambda_n(G_0)$ the $\Kbb$-vector space generated by all elementary $n$-paths on $G_0$, that is,
\[
\Lambda_n(G_0)
:= \operatorname{span}_{\Kbb}
\bigl\{ e_{v_0 v_1 \cdots v_n} \mid
v_0 v_1 \cdots v_n \text{ is an elementary $n$-path on } G_0 \bigr\}.
\]

Then $\Lambda_{\ast}(G_0)$, together with the boundary homomorphism
\begin{equation*}
\partial_n \colon \Lambda_n(G_0)\to \Lambda_{n-1}(G_0),\quad
a e_{v_0v_1\cdots v_{n}} \mapsto a \sum_{i=0}^{n}(-1)^i e_{v_0v_1\cdots v_{i-1}\widehat{v_i}v_{i+1}\cdots v_n}
\end{equation*}
forms a chain complex. An elementary $n$-path $v_0 v_1 \cdots v_n$ is called \textit{regular} if $v_i \neq v_{i+1}$ for all $0 \leq i \leq n-1$. The free $\Kbb$-module generated by all regular $n$-paths is denoted by $R_n(G_0)$. Otherwise, it is called a non-regular $n$-path, and the free $\Kbb$-module generated by all non-regular $n$-paths is denoted by $I_n(G_0)$. It follows that $R_n(G_0)$ is isomorphic to $\Lambda_n(G_0)/I_n(G_0)$.

Taking the arrow information of $G$ into account, the \textit{allowed $n$-path} on $G$ is defined to be a vertex sequence $v_0v_1\cdots v_n$ such that $(v_i\to v_{i+1})\in G_1$ for each $0\leq i\leq n-1$. Otherwise, it is called non-allowed. In contrast to path maps, allowed $n$-paths do not involve trivial arrows, and all arrows are of the form $v_i\to v_{i+1}\in G_1$ for all $0\leq i\leq n-1$.

Then we write
\begin{equation*}
    \Acal_n(G):=\{e_{v_0v_1\cdots v_n}\mid v_0v_1\cdots v_n \text{ is an allowed $n$-path on } G\}
\end{equation*}
as a submodule of $\Lambda_n(G_0)/I_n(G_0)$. However, $\Acal_n(G)$ is not closed under $\partial_n$. Yau et al.\ introduced the $\partial$-invariant chain complex $(\Omega_{\ast}(G),\partial_{\ast})$, where
\begin{equation*}
    \Omega_{n}(G)=\Acal_n(G)\cap \partial_n^{-1}\Acal_{n-1}(G)
\end{equation*}
for $n\geq 0$. It follows that
$R_0(G_0)=\Acal_0(G)=\Omega_0(G)=\Kbb G_0$
and
$\Acal_1(G)=\Omega_1(G)=\Kbb G_1$.

Dual to $\Omega_n(G)$, $\Omega^n(G)$ is considered as the $\Kbb$-module of $n$-forms on $G$. Define \[ \Lambda^n(G_0):=\Kbb\{e^{v_0v_1\cdots v_n}\mid v_0v_1\cdots v_n \text{ is an elementary $n$-path on } G_0\}, \]
where $e^{v_0 v_1\cdots v_n}\colon \Lambda_n(G_0)\to \Kbb$ is given by
\begin{equation*}
    e^{v_0v_1\cdots v_n}(w_0w_1\cdots w_n)=
    \begin{cases}
        1, & \text{if } v_0v_1\cdots v_n=w_0w_1\cdots w_n,\\
        0, & \text{otherwise}.
    \end{cases}
\end{equation*}
With this notation, the following submodules of $\Lambda^n(G_0)$ are introduced:
\begin{equation*}
\begin{aligned}
R^n(G_0)&:=\Kbb\{e^{v_0v_1\cdots v_n}\mid v_0v_1\cdots v_n \text{ is a regular $n$-path on } G_0\},\\
\Acal^n(G)&:=\Kbb\{e^{v_0v_1\cdots v_n}\mid v_0v_1\cdots v_n \text{ is an allowed $n$-path on } G\},\\
N^n(G)&:=\Kbb\{e^{v_0v_1\cdots v_n}\mid v_0v_1\cdots v_n \text{ is a non-allowed $n$-path on } G\}.
\end{aligned}
\end{equation*}
And the coboundary homomorphism $\delta^n\colon \Lambda^n(G_0)\to \Lambda^{n+1}(G_0)$ is defined by $\delta^n(f)=f\circ \partial_{n+1}$. At the same time, define
\begin{equation*}
\begin{aligned}
J^n(G)&:=N^n(G)+\delta^{n-1}(N^{n-1}(G)),\\
\Omega^n(G)&:=\Acal^n(G)/\bigl(\Acal^n(G)\cap J^n(G)\bigr).
\end{aligned}
\end{equation*}

An element $f\in \Omega^n(G)$ is called an $n$-form on $G$. By \cite[Lemma 3.19]{grigor2012homologies}, $\Omega^n(G)$ is identified with the dual space of $\Omega_n(G)$.
\begin{remark}
A path map, or an $n$-path $\alpha$ on $G$, may also be represented by a sequence of arrows $a_1 a_2 \cdots a_n$, where $a_i$ denotes the arrow $\alpha(i-1)\to \alpha(i)$ in $G$.
Accordingly, an $n$-form on $G$ is written as $e^{a_1 a_2 \cdots a_n}$.
Thus, every $n$-form $\omega$ is a $\Kbb$-linear combination of elements of the form $e^{a_1 a_2 \cdots a_n}$.
These equivalent representations of path maps, $n$-paths, and $n$-forms are used interchangeably in what follows, depending on the context.

Moreover, a path map, \(n\)-path, or \(n\)-form \(\alpha\) on \(G\) determines a sequence of vertices \(\alpha(i)\) for all \(i\). Conversely, a given sequence of vertices may correspond to several distinct path maps, and thus to different \(n\)-paths or \(n\)-forms. Nevertheless, such vertex sequences will be used later to describe local loop transformations and to study the fundamental group of a digraph.
\end{remark}

At the end of this subsection, several algebraic properties of $\Omega^p(G)$ are recalled.
Let $\Acal(G):=\Lambda^0(G_0)=R^0(G_0)=\Acal^0(G)$ be the space of $0$-forms on $G$.
It is a unital $\Kbb$-algebra by definition.
For any $p\geq 0$, the space of $p$-forms $\Omega^p(G)$ is {an} $\Acal(G)$-bimodule, the point multiplications are defined as following:
\begin{equation*}
\begin{aligned}
    e^{v}\cdot e^{a}=\begin{cases}
                    0, v\neq s(a),\\
                    e^{a}, v=s(a)
	                   \end{cases}\qquad \qquad
    e^{a}\cdot e^{v}=\begin{cases}
                    0, v\neq t(a),\\
                    e^{a}, v=t(a).
	                   \end{cases}
\end{aligned}
\end{equation*}

In general, let $f=\sum_{v\in G_0} f(v)e^v$ and $\omega=\sum_{a\in G_1}\omega(a)e^a$. Then the two multiplications are given by
\begin{equation*}
\begin{aligned}
f\cdot \omega &= \sum_{a\in G_1} f(s(a))\,\omega(a)\,e^a,\\
\omega\cdot f &= \sum_{a\in G_1} f(t(a))\,\omega(a)\,e^a.
\end{aligned}
\end{equation*}
Therefore, $(f\cdot\omega)(a)=f(s(a))\omega(a)$ and $(\omega\cdot f)(a)=f(t(a))\omega(a)$.

\subsection{Homotopy Theory on Digraphs} \label{subsec:pre-homotopy-dig}

Let $f,g\colon  G\rightarrow G'$ be two digraph maps, $f$ is called \textit{homotopic} to $g$, denoted by $f\simeq g$, if there is a line digraph $I_n$ and a digraph map $F\colon   G\,\Box \, I_n\rightarrow G'$ such that $F|_{G\Box \{0\}} = f$, $F|_{G\Box \{n\}} = g$. Two digraphs $G$ and $G'$ are homotopy equivalent if there exist digraph maps $f\colon G\to G'$ and $g\colon G'\to G$ such that
$f\circ g\simeq \Id_{G'}$ and $g\circ f\simeq \Id_{G}$.
In this case, we write $G\simeq G'$, and $f$ and $g$ are called homotopy inverses of each other.

For example, there are three standard digraphs that are homotopic to a single vertex, as shown in Figure~\ref{fig:contrible-dig}.
\begin{figure}[h]
\centering
\begin{subfigure}{0.3\textwidth}
	\begin{equation*}
		\xymatrix{v_0\ar[r]^{a_1}\ar[rd]^{a_3}&v_1\ar[d]^{a_2}\\&v_2}
	\end{equation*}
	\caption{Standard Triangle}
\end{subfigure}
\hfill
\begin{subfigure}{0.3\textwidth}
	\begin{equation*}
		\xymatrix{v_0\ar[r]^{a_1}\ar[d]^{a_3}&v_1\ar[d]^{a_2}\\v_2\ar[r]^{a_4}&v_3}
	\end{equation*}
	\caption{Standard Square}
\end{subfigure}
\hfill
\begin{subfigure}{0.3\textwidth}
	\begin{equation*}
		\xymatrix{v_0\ar@/^/[r]^{a_1}&v_1\ar@/^/[l]^{a_2}}
	\end{equation*}
	\caption{Standard Double Edge}
\end{subfigure}
\caption{Three contractible digraphs.}
\label{fig:contrible-dig}
\end{figure}
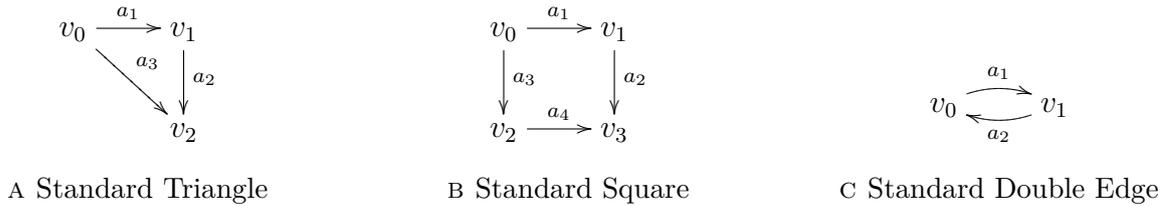
Further, a triple of vertices $v_0, v_1, v_2$ is called a \emph{triangle} if there exists a permutation $\sigma$ of $\{v_0, v_1, v_2\}$ such that $\sigma(v_0), \sigma(v_1), \sigma(v_2)$ form a standard triangle.
Similarly, four vertices $v_0, v_1, v_2, v_3$ are called a \emph{square} if there exists a cyclic permutation $\sigma$ of $\{v_0, v_1, v_2, v_3\}$ such that $\sigma(v_0), \sigma(v_1), \sigma(v_2), \sigma(v_3)$ form a standard square.

Let $\underline{\Dig}$ and $\underline{\BDig}$ denote the categories of digraphs and based digraphs, respectively.
Their objects are digraphs and based digraphs, and their morphisms are digraph maps and based digraph maps, respectively.
The corresponding homotopy categories are denoted by $\underline{\hDig}$ and $\underline{\hBDig}$, whose objects are as above and whose morphisms are homotopy classes of digraph maps and based digraph maps, respectively.

The homotopy theory of path maps and based path maps on a given digraph $G$ is now considered.
Suppose $\alpha\colon I_n\to G$ and $\beta\colon I_m\to G$ are two path maps.
Without loss of generality, assume $m\leq n$.
A \textit{shrinking map} is a digraph map $h\colon I_n\to I_m$ such that $h(0)=0$, $h(n)=m$, and $h(i)\leq h(j)$ whenever $i\leq j$.

A path map $\alpha$ is said to be \textit{one-step directly $C_{\partial}$-homotopic} to $\beta$, denoted by $\alpha\simeq_1 \beta$, if there exists a digraph map $F\colon C_h\to G$ such that
\begin{equation*}
    F|_{I_n}=\alpha, \qquad F|_{I_m}=\beta.
\end{equation*}
If the same holds with \(C_h\) replaced by \(C_h^{-}\), we say that \(\alpha\) is \emph{one-step inverse \(C_{\partial}\)-homotopic} to \(\beta\). Both one-step directly and one-step inverse \(C_{\partial}\)-homotopies are referred to as \emph{one-step \(C_{\partial}\)-homotopy}.
Moreover, if there exists a finite sequence of path maps \(\{\varphi_k\}_{k=0}^m\) with \(\varphi_0 = \alpha\) and \(\varphi_m = \beta\), such that for each \(k = 0, \dots, m-1\), either \(\varphi_k\) is one-step \(C_{\partial}\)-homotopic to \(\varphi_{k+1}\) or \(\varphi_{k+1}\) is one-step \(C_{\partial}\)-homotopic to \(\varphi_k\), then \(\alpha\) and \(\beta\) are said to be \emph{\(C_{\partial}\)-homotopic}, denoted \(\alpha \simeq_C \beta\).

The $C_{\partial}$-homotopy equivalence of path maps admits a combinatorial description as follows~\cite{grigor2018fundamental}.
\begin{theorem}
Two path maps $\alpha\colon I_n \rightarrow G$ and $\beta\colon I_m \rightarrow G$ are said to be \emph{$C_{\partial}$-homotopy equivalent} if $\alpha$ can be obtained from $\beta$ by a finite sequence of the following local transformations of vertex sequences and their inverses:
\begin{enumerate}
    \item[(i)] $\cdots v_0v_1v_2 \cdots \;\mapsto\; \cdots v_0v_2 \cdots$
    if $v_0, v_1, v_2$ form a triangle in $G$;

    \item[(ii)] $\cdots v_0v_1v_3 \cdots \;\mapsto\;
          \cdots v_0v_2v_3 \cdots$
    if $v_0, v_1, v_2, v_3$ form a square in $G$;

    \item[(iii)] $\cdots v_0v_1v_3v_2 \cdots \;\mapsto\;
          \cdots v_0v_2 \cdots$
    if $v_0, v_1, v_2, v_3$ form a square in $G$;

    \item[(iv)] $\cdots v_0v_1v_0 \cdots \;\mapsto\; \cdots v_0v_0 \cdots$
    if $v_0 \to v_1$, or $v_1 \to v_0$, or $v_0 = v_1$;

    \item[(v)] $\cdots v_0v_0v_1 \cdots \;\mapsto\; \cdots v_0v_1 \cdots$.
\end{enumerate}
Here the dots ``$\cdots$'' denote the unchanged parts of the vertex sequences.
\end{theorem}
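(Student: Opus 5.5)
The plan is to prove the two implications separately: the local moves generate the same equivalence relation as one-step $C_{\partial}$-homotopies. Throughout, path maps are treated as vertex sequences $\alpha(0)\alpha(1)\cdots\alpha(n)$ in which consecutive vertices are equal or joined by an arrow in some direction.

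\emph{Each move is a $C_{\partial}$-homotopy.} For each of (i)--(v) I would write down an explicit shrinking map $h\colon I_n\to I_m$ that is the identity away from the modified window and collapses the window. I would then define $F\colon C_h\to G$, or $C_h^{-}$ in the reversed situation, by $\alpha$ on $I_n$ and $\beta$ on $I_m$. For the cylinder arrows $i\to h(i)$, the condition to check is that $\alpha(i)$ and $\beta(h(i))$ are equal or joined by an arrow of the correct orientation.
\begin{itemize}
\item For (v) and (iv), the window collapses a repeated vertex or a backtrack, and the cylinder arrows are trivial or are the backtracking arrow itself.
\item For (i), send $v_1$ to the vertex $v_0$ or $v_2$ that it points to or from, as dictated by the triangle.
\item For (ii) and (iii), the square supplies the arrow $v_1\to v_2$ or $v_2\to v_1$ needed after a cyclic relabelling.
\end{itemize}
If the orientation of the square's diagonal arrows is opposite to what $C_h$ requires, I would use $C_h^{-}$ instead. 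If that also fails, I would reverse the roles of $\alpha$ and $\beta$, since $C_{\partial}$-homotopy is symmetric by definition.

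\emph{A one-step homotopy decomposes into moves.} Let $F\colon C_h\to G$ with $F|_{I_n}=\alpha$ and $F|_{I_m}=\beta$. The inverse cylinder is handled by the same argument with arrows reversed. I would transform $\alpha$ into $\beta$ by a left-to-right sweep that maintains the following invariant: after step $k$, the sequence reads $\beta(0)\cdots\beta(h(k))\,\alpha(k)\cdots\alpha(n)$, possibly with repeated vertices that are removed at the end by (v). The start and end are handled by $h(0)=0$ and $h(n)=m$: first insert $\beta(0)$ using (v) and (iv), and at the end absorb the tail, since $\alpha(n)\to\beta(m)$.

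The inductive step has two cases.
\begin{itemize}
\item If $h(k+1)=h(k)=j$, the vertices $\beta(j),\alpha(k),\alpha(k+1)$ carry arrows $\alpha(k)\to\beta(j)$ and $\alpha(k+1)\to\beta(j)$, together with the path arrow between $\alpha(k)$ and $\alpha(k+1)$. These form a triangle, or a degenerate triangle when some arrow is trivial. Deleting $\alpha(k)$ is then move (i), or (iv)/(v) in the degenerate case.
\item If $h(k+1)=j+1$, the vertices $\alpha(k),\alpha(k+1),\beta(j),\beta(j+1)$ form a square whose vertical arrows are cylinder arrows. Replacing $\alpha(k)\alpha(k+1)$ by $\beta(j)\beta(j+1)$, suitably interleaved, is move (ii) or (iii), or a degeneration reducible to (i), (iv) and (v).
\end{itemize}

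\emph{Expected obstacle.} The main difficulty is the bookkeeping of the degenerate cases. Any of the four square edges may be a trivial arrow, the horizontal arrows may point in either direction, and the "square" may collapse to a triangle, a double edge or a backtrack. Each such configuration must be matched to one of (i)--(v) or a short composite of them, with the cyclic-permutation convention for squares respected. I would first reduce by symmetry, using path inversion and the reversal $C_h\leftrightarrow C_h^{-}$. Then I would enumerate cases by the number of trivial edges, noting that two coincident vertices turn a square into a triangle or a backtrack.
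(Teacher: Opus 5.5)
The paper does not prove this statement; it quotes it from Grigor'yan et al. Your architecture is the standard one: show each move is a $C_\partial$-homotopy, and conversely sweep through $C_h$ one cell at a time. Your sweep is the mirror image of the $\psi_i$ construction in the proof of Theorem~\ref{thm:closed-inds-path}, and the converse is sound in outline. You should state why a cell with $h(k+1)=h(k)+1$ is a genuine square: $h$ is a digraph map, so the arrows between $k,k+1$ and between $j,j+1$ point the same way, which puts a source and a sink at opposite corners. Two steps, however, fail as written.

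First, move (ii) is not a one-step $C_\partial$-homotopy, so your plan of one shrinking map per move breaks there.
\begin{itemize}
\item Both sides of (ii) have the same length. A shrinking map $h\colon I_n\to I_n$ has increments $h(i+1)-h(i)\in\{0,1\}$ summing to $n$, so $h=\mathrm{id}$.
\item The cylinder arrow at the middle of the window would then have to join $v_1$ and $v_2$. But whenever $v_0v_1v_3$ and $v_0v_2v_3$ are both paths through a square, $v_1$ and $v_2$ are opposite corners, and in general no arrow joins them.
\item So ``the square supplies the arrow $v_1\to v_2$'' is false under every relabelling, and neither $C_h^-$ nor swapping $\alpha,\beta$ helps. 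Even for (iii), which is a genuine one-step homotopy, the cylinder arrows are the square edges $v_0v_1$ and $v_2v_3$, not a diagonal.
\end{itemize}
The repair is to pass through a longer path. In the standard square, $v_0v_1v_3v_2v_3$ is one-step homotopic to $v_0v_1v_3$, using $h=(0,1,2,2,2)$ on $C_h$ with the arrow $v_2\to v_3$. It is also one-step homotopic to $v_0v_2v_3$, using $h=(0,0,1,1,2)$ on $C_h^-$ with $v_0\to v_1$ and $v_2\to v_3$. Equivalently, (ii) follows from (iii), (iv) and (v). The same device handles the square cells of your sweep in whatever labelling they occur. For example, when the fixed ends are the two middle vertices, $m_1sm_2\mapsto m_1sm_1tm_2\mapsto m_1tm_2$ via (iii)$^{-1}$, (iv), (v). So you need not rely on the loosely stated relabelling convention.

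Second, all five moves fix the first and last vertex of the sequence. The statement is therefore only true for $C_\partial$-homotopy with fixed endpoints: $F(0)=F(0')$, $F(n)=F(m')$, and all intermediate paths in $P_{x,y}(G)$. That is how the paper actually uses it. With the literal preliminary definition, the constant paths at $x$ and at $y$, for an arrow $x\to y$, are one-step homotopic via $h=\mathrm{id}_{I_0}$, yet no move relates them.

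Consequently your step ``insert $\beta(0)$ using (v) and (iv)'' cannot produce a sequence starting at $\beta(0)\ne\alpha(0)$; (v)$^{-1}$ and (iv)$^{-1}$ only give $\alpha(0)\beta(0)\alpha(0)\cdots$. Likewise the tail cannot be absorbed when $\alpha(n)\ne\beta(m)$. Impose the fixed-endpoint hypothesis explicitly; those steps then become vacuous and the rest of your sweep goes through.
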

\section{Iterated Path Integrals on Digraphs}
Let $\alpha\colon I_n\to G$ be a path map on $G$, and let $\alpha(i)$ denote its $i$-th vertex. Define the associated arrow $\alpha_i$ as follows: if the $i$-th arrow is of the form $\alpha(i-1)\to \alpha(i)$, then $\alpha_i$ is taken to be this arrow; otherwise, it is taken to be its inverse. In particular, if $\alpha(i-1)\to \alpha(i)$ is a trivial arrow, $\alpha_i$ is itself.

In this setting, there is a well-defined pairing
\[
\langle-,-\rangle\colon \Omega^1(G)\times \Lambda_1(G_0)\to \mathbb{K},
\]
given by $(\omega,a)\mapsto \omega(a)$, and extended linearly by
\[
\langle \omega,a^{-1}\rangle = -\omega(a), \qquad a\in G_1,
\]
and $\langle \omega,b\rangle=0$ for any trivial arrow $b$.

Let $I=(t_1,\cdots,t_r)$ be a non-decreasing sequence of positive integers. Then there {exist} $m$ integers
$1\leq i_1<\cdots<i_m=r$ such that
\[
t_1=\cdots=t_{i_1}<t_{i_1+1}=\cdots=t_{i_2}<\cdots<t_{i_{m-1}+1}=\cdots=t_{i_m}.
\]

\begin{definition}
The \emph{volume number} $\tau(t_1,\cdots,t_r)$ of the sequence $(t_1,\cdots,t_r)$ is defined by
\begin{equation*}
\tau(t_1,\cdots,t_r)=i_1!(i_2-i_1)!\cdots(i_m-i_{m-1})!.
\end{equation*}
\end{definition}

\begin{remark}
Let $\varphi_k(t_1,\dots,t_r)$ denote the multiplicity of $k$ in $(t_1,\dots,t_r)$ for $k\geq 1$. Then
\begin{equation*}
\tau(t_1,\cdots,t_r)=\prod_{k\geq 1}\varphi_k(t_1,\cdots,t_r)!.
\end{equation*}

{The geometry beyond the {coefficient} $\frac{1}{\tau(t_1,\cdots,t_r)}$ is as following: for each non-decreasing sequence $(t_1,\cdots,t_r)$, there exists a cube $C(t_1,\cdots,t_r)$ determined by $(t_1,\cdots,t_r)$ in $\Rbb^r$ namely
\begin{equation*}
	C(t_1,\cdots,t_r):=\{(x_1,\cdots,x_r): 0\leq t_k-x_k\leq 1,\forall k=1,\cdots,r\}.
\end{equation*}
Then $\frac{1}{\tau(t_1,\cdots,t_r)}$ is the volume for the subspace of $C(t_1,\cdots,t_r)$ satisfying $0\leq x_1\leq \cdots\leq x_r$, see Figure \ref{fig:vol-number}.}
\begin{figure}[htbp]
\centering
\begin{subfigure}{0.5\textwidth}
\includegraphics[width=\linewidth]{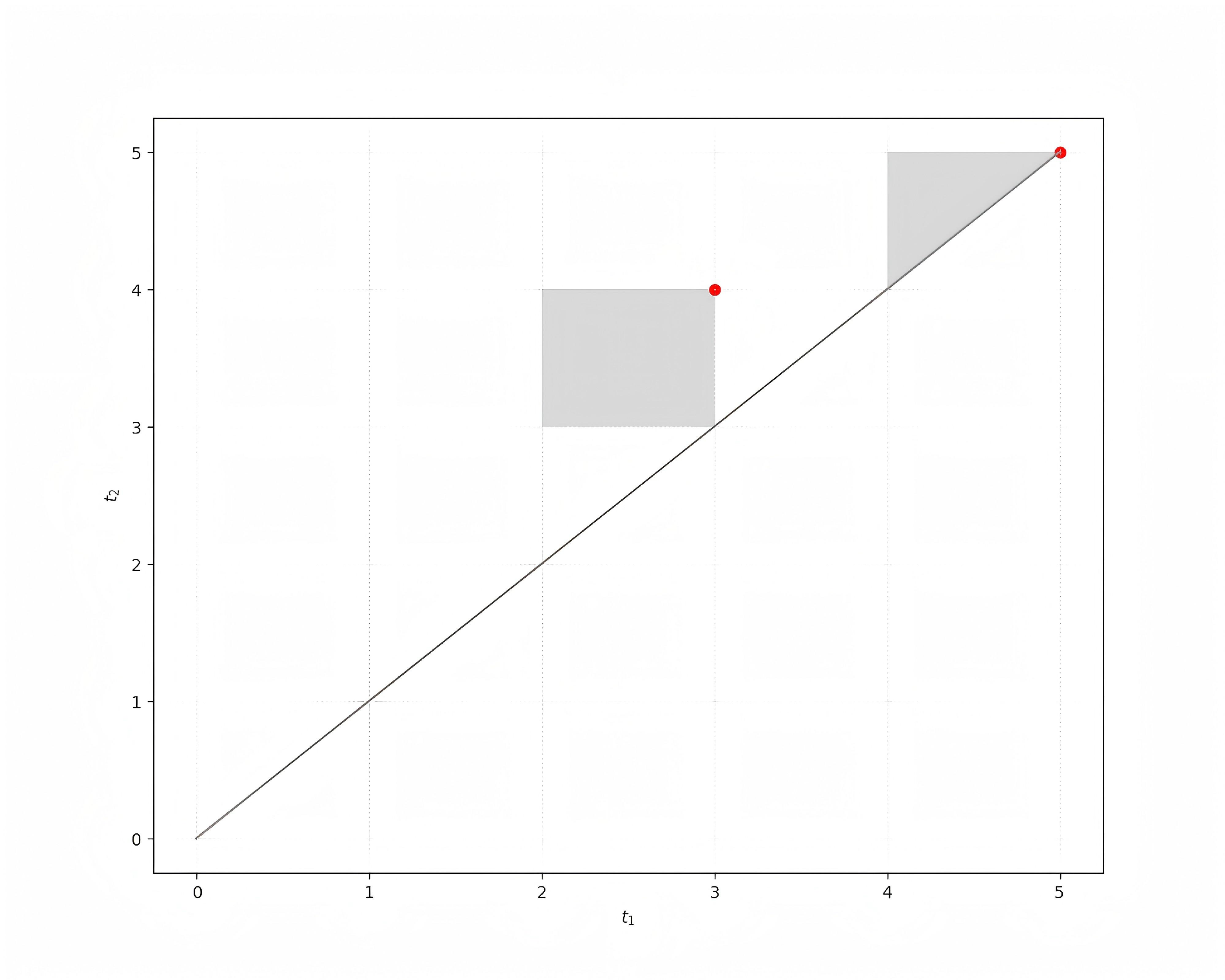}
\end{subfigure}
\hfill
\caption{{Visualization for $r=2$. For the point $(t_1,t_2)=(3,4)$, $\tau(3,4)=1$ and for the point $(t_1,t_2)=(5,5)$,  $\tau(5,5)=2$}}
\label{fig:vol-number}
\end{figure}
\end{remark}

\begin{definition}\label{iteratedintegral}
   Let $\omega_1,\cdots,\omega_r$ be 1-forms and $\alpha\colon I_n\to G$ be a path map on $G$. Then the \textit{iterated path integral} of $\omega_1,\cdots,\omega_r$ over $\alpha$ denoted by $\int_{\alpha}\omega_1\cdots\omega_r$ is defined by
   \begin{equation*}
	\begin{aligned}
	&\int_{\alpha}\omega_1\cdots\omega_r
	&=\sum_{1\leq t_1\leq t_2\leq \cdots\leq t_r\leq n}\frac{\langle\omega_1, \alpha_{t_1} \rangle\cdots\langle\omega_r,\alpha_{t_r}\rangle}{\tau(t_1,\cdots,t_r)}=\sum_{1\leq t_1\leq t_2\leq \cdots\leq t_r\leq n}\frac{\prod\limits_{i=1}^r\langle\omega_i, \alpha_{t_i} \rangle}{\tau(t_1,\cdots,t_r)}.
	\end{aligned}
\end{equation*}
If $\alpha$ is a loop, i.e. $\alpha(0)=\alpha(n)$, the iterated path integrals are denoted by  $\oint_{\alpha}\omega_1\cdots\omega_r$. In addition, we refer to $\int_{\alpha} 1 = 1$.
\end{definition}
There are several simple examples.
\begin{enumerate}
    \item if $r=1$, $\int_{\alpha}\omega=\sum\limits_{1\leq i\leq n}\omega(\alpha_i)$;
    \item if $r=2$, $\int_{\alpha}\omega_1\omega_2=\sum\limits_{1\leq t_1<t_2\leq n}\omega_1(\alpha_{t_1})\omega_2(\alpha_{t_2})+\sum\limits_{t=1}^n\frac{\omega_1(\alpha_t)\omega_2(\alpha_t)}{2}$;
    \item if $\alpha\colon I_1\to G$ is a path map, $\int_{\alpha} \omega_1\cdots\omega_r= \frac{\omega_1(\alpha_1)\cdots\omega_r(\alpha_1)}{r!}$;
    \item if $\alpha\colon I_2\to G$ is a two-step path map, $\int_{\alpha}\omega_1\cdots\omega_r=\sum\limits_{k=0}^r\frac{\omega_1(\alpha_1)\cdots\omega_k(\alpha_1)\omega_{k+1}(\alpha_2)\cdots\omega_r(\alpha_2)}{k!(r-k)!}$.
\end{enumerate}
{
\begin{remark} The examples above suggest a possible interpretation of
Definition~\ref{iteratedintegral} for directed networks. If a $1$-form
$\omega$ assigns a weight or loss to each arrow, then
$\int_{\alpha}\omega$
gives the total weight or loss along the path map $\alpha$. Moreover, $
\int_{\alpha}\omega_1\omega_2$
records the ordered interaction  with
$\omega_1$ occurring before $\omega_2$ along $\alpha$. For example,
$\omega_1$ may represent activation amplification and $\omega_2$
inhibition or delay. This
order-sensitive feature may be useful for describing information flow
in neural networks or multistage processes in cybersecurity attack
digraphs, where the ordering of interactions can be as important as
their accumulated magnitudes. Further investigation and potential applications deserve to be explored in specific
settings. 
\end{remark} }

It is clear that the iterated path integrals depend {not only} on the choice of differential forms but also on the choice of path maps. We may make the path map undetermined and define the indefinite iterated path integrals.
Let $x,y$ be two fixed vertices of $G$, denoted by $P(G)$, $P_x(G)$ and  $P_{x,y}(G)$ the $\Kbb$-vector space spanned by all {finite} path maps on $G$,  path maps from $x$ and  path maps from $x$ to $y$, respectively. Similarly, denote $L(G)$ and $L_x(G)$ as the $\Kbb$-vector space spanned by all finite loops on $G$ and  all loops based at $x$ on $G$, respectively.
\begin{definition}
Let $\omega_1,\cdots,\omega_r$ be 1-forms on digraph $G$. The indefinite iterated path integral on $G$  denoted by $\int \omega_1\cdots\omega_r$, the indefinite iterated path integral from $x$ on $G$ denoted by  $\int_x\omega_1\cdots\omega_r$ and the indefinite iterated path integral from $x$ to $y$  denoted by $\int_x^y\omega_1\cdots\omega_r$, are the {linear} {functionals} on path {space} $P(G)$, $P_x(G)$ or $P_{x,y}(G)$, respectively, given by
\begin{equation}\label{eq:dual-PG}
	\begin{aligned}
	\int\omega_1\cdots \omega_r\colon P(G)\to \Kbb, &  \quad \alpha\mapsto \int_{\alpha}\omega_1\cdots \omega_r, \\
    \int_x\omega_1\cdots \omega_r\colon P_x(G)\to \Kbb, &\quad \alpha\mapsto \int_{\alpha}\omega_1\cdots \omega_r, \\ \int_x^y\omega_1\cdots \omega_r\colon P_{x,y}(G)\to \Kbb, &\quad \alpha\mapsto  \int_{\alpha}\omega_1\cdots \omega_r.
	\end{aligned}
\end{equation}
Similarly the indefinite loop iterated path integral $\oint \omega_1\cdots \omega_r$, the  indefinite loop iterated path integral based at $x$ $\oint_x\omega_1\cdots \omega_r$ are {linear} {functionals} on $L(G)$, $L_x(G)$ respectively given by
\begin{equation}
	\begin{aligned}
	\oint\omega_1\cdots \omega_r\colon L(G)\to \Kbb, &\qquad \alpha\mapsto \oint_{\alpha}\omega_1\cdots \omega_r, \\
  \oint_x\omega_1\cdots \omega_r\colon L_x(G)\to \Kbb,  &\qquad \alpha\mapsto \oint_{\alpha}\omega_1\cdots \omega_r.
	\end{aligned}
\end{equation}
\end{definition}

\subsection{Basic Properties of Iterated Path Integrals}
We present several basic properties of iterated path integrals on digraphs in this subsection. For simplicity, throughout this and the following subsection, we assume that $\omega_1,\cdots,\omega_r$ are 1-forms on $G$.

\begin{proposition}\label{prop:ii-conca-path}
Let $\alpha\colon I_n\to G$ and $\beta\colon I_m\to G$ be two path maps such that $\alpha(n)=\beta(0)$. Then
\begin{equation*}
\int_{\alpha\star\beta}\omega_1\cdots\omega_r
=\sum_{i=0}^r \int_{\alpha}\omega_1\cdots\omega_i\,
\int_{\beta}\omega_{i+1}\cdots\omega_r.
\end{equation*}
\end{proposition}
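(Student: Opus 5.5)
Plan: write $\gamma=\alpha\star\beta\colon I_{n+m}\to G$. By definition of concatenation, its associated arrows are $\gamma_t=\alpha_t$ for $1\le t\le n$ and $\gamma_t=\beta_{t-n}$ for $n+1\le t\le n+m$. In the defining sum for $\int_{\gamma}\omega_1\cdots\omega_r$, split the index set $\{1\le t_1\le\cdots\le t_r\le n+m\}$ according to the unique $i\in\{0,\dots,r\}$ with $t_i\le n<t_{i+1}$ (conventions $t_0=-\infty$, $t_{r+1}=+\infty$). This partitions the index set into $r+1$ disjoint pieces. On the $i$-th piece, $(t_1,\dots,t_i)$ ranges over all non-decreasing sequences in $\{1,\dots,n\}$, and $(s_{i+1},\dots,s_r):=(t_{i+1}-n,\dots,t_r-n)$ ranges over all non-decreasing sequences in $\{1,\dots,m\}$, independently. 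So each piece is in bijection with a product of index sets.

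The key observation concerns the volume number. Since every $t_j$ with $j\le i$ is at most $n$ and every $t_j$ with $j>i$ is at least $n+1$, no value is shared between the two blocks. By the multiplicity description in the Remark, $\tau(t_1,\dots,t_r)=\prod_k\varphi_k(t_1,\dots,t_r)!$, so
\[
\tau(t_1,\dots,t_r)=\tau(t_1,\dots,t_i)\,\tau(s_{i+1},\dots,s_r),
\]
where shifting by $n$ does not change multiplicities and the empty sequence has $\tau=1$. The product $\prod_j\langle\omega_j,\gamma_{t_j}\rangle$ factors the same way, as $\prod_{j\le i}\langle\omega_j,\alpha_{t_j}\rangle\cdot\prod_{j>i}\langle\omega_j,\beta_{s_j}\rangle$. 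The sum over the $i$-th piece therefore factors as $\int_\alpha\omega_1\cdots\omega_i\cdot\int_\beta\omega_{i+1}\cdots\omega_r$. Summing over $i$ gives the formula. The boundary terms $i=0$ and $i=r$ use the convention $\int 1=1$.

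The only delicate point is the factorization of $\tau$. It works precisely because the cut at $n$ separates the indices strictly, and this is where the choice of the strict inequality $n<t_{i+1}$ matters: ties at $n$ are all placed in the $\alpha$-block. I would check explicitly that this assignment is forced, namely that $t_j=n$ always corresponds to the arrow $\alpha_n$. Hence the partition is genuinely disjoint and exhaustive, and no $1/\tau$ weight is split across the two blocks.
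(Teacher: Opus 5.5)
Your proposal is correct and follows essentially the same route as the paper: split the index sequences at the unique $i$ with $t_i\le n<t_{i+1}$, factor $\tau(t_1,\dots,t_r)=\tau(t_1,\dots,t_i)\,\tau(t_{i+1}-n,\dots,t_r-n)$ using disjointness of the two value blocks, and factor the product of pairings. The tie-at-$n$ worry you raise is harmless, since the indices $t_j$ label arrows and the arrow at position $n$ is $\alpha_n$.
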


\begin{proof}
By the definition of concatenation of path maps, one has
$(\alpha\star\beta)_t=\alpha_t$ for $1\leq t\leq n$, and
$(\alpha\star\beta)_t=\beta_{t-n}$ for $n+1\leq t\leq m+n$.

Let $(t_1,\cdots,t_r)$ be a non-decreasing sequence with $t_j\in\{1,2,\cdots,m+n\}$, ${1}\leq j\leq r$. Then there exists a unique index $0\leq i\leq r$ such that
\[
1\leq t_1 \leq \cdots \leq t_i \leq n,
\qquad
n+1\leq t_{i+1}\leq \cdots \leq t_r \leq m+n.
\]
Consequently, the volume number satisfies
\begin{equation*}
\tau(t_1,\cdots,t_r)
=
\tau(t_1,\cdots,t_i)\,\tau(t_{i+1},\cdots,t_r)
=
\tau(t_1,\cdots,t_i)\,\tau(t_{i+1}-n,\cdots,t_r-n).
\end{equation*}

It follows from the definition of iterated path integrals that
\begin{equation*}
\int_{\alpha\star\beta}\omega_1\cdots\omega_r
=
\sum_{1\leq t_1\leq\cdots\leq t_r\leq m+n}
\frac{\prod_{j=1}^r \langle \omega_j,(\alpha\star\beta)_{t_j}\rangle}
{\tau(t_1,\cdots,t_r)}.
\end{equation*}

On the other hand,
\begin{equation*}
\sum_{i=0}^r \int_{\alpha}\omega_1\cdots \omega_i\,
\int_{\beta}\omega_{i+1}\cdots \omega_r
=
\sum_{i=0}^r
\sum_{\substack{1\leq t_1\leq\cdots\leq t_i\leq n\\
n+1 \leq t_{i+1}\leq\cdots\leq t_r\leq m+n}}
\frac{\prod_{j=1}^i \langle\omega_j,\alpha_{t_j}\rangle}
{\tau(t_1,\cdots,t_i)}
\frac{\prod_{j=i+1}^r \langle\omega_j,\beta_{t_j-n}\rangle}
{\tau(t_{i+1}-n,\cdots,t_r-n)}.
\end{equation*}
Combining the above identities yields the desired equality.
\end{proof}

Let $\alpha\colon I_n \to G$ be a path map. {Subdividing $\alpha$ at vertex $i$, we obtain two path maps
\[
\widehat{\alpha}^i \colon I_{n+1}^i \to G
\quad \text{and} \quad
\widehat{\alpha}^{-i} \colon I_{n+1}^{-i} \to G,
\]
for $0 \leq i \leq n$ such that $\widehat{\alpha}^i (i) = \widehat{\alpha}^i (i+1)$ for arrow $i\to i+1 \in I_{n+1}^i$ and $\widehat{\alpha}^{-i} (i) = \widehat{\alpha}^{-i} (i+1)$ for arrow $i\to i+1 \in I_{n+1}^{-i}$.} These two path maps have different domains but the same image in $G$. The following corollary shows that the iterated path integral is invariant under this operation.

\begin{corollary}\label{trivial}
Let $\alpha\colon I_n \to G$ be a path map.
\[
\int_{\widehat{\alpha}^i}\omega_1\cdots\omega_r
=
\int_{\widehat{\alpha}^{-i}}\omega_1\cdots\omega_r
=
\int_{\alpha}\omega_1\cdots\omega_r.
\]
\end{corollary}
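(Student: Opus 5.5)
The plan is to write the subdivided path as a concatenation and apply Proposition~\ref{prop:ii-conca-path}, using that a single trivial arrow contributes nothing except to the empty word. Inserting a trivial arrow at vertex $i$ splits the path into pieces:
\[
\widehat{\alpha}^{\pm i}=\alpha|_{[0,i]}\star r_{\alpha(i)}\star \alpha|_{[i:]},
\]
where $r_{\alpha(i)}\colon I_1\to G$ is the path map sending both vertices to $\alpha(i)$. The direction of the arrow $i\to i+1$ or $i+1\to i$ in the domain is irrelevant, because the associated arrow is the trivial arrow $r_{\alpha(i)}$ in either case. So the $+i$ and $-i$ cases are handled identically, and it suffices to treat one of them. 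In the endpoint cases $i=0$ or $i=n$, one of the outer pieces is the constant path $I_0\to G$; its iterated integrals are $1$ for the empty word and $0$ otherwise, since the defining sum is empty.

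The key steps, in order, are as follows.
\begin{enumerate}
\item Compute the integrals over the trivial piece. For $\rho=r_{\alpha(i)}$, the definition gives
\[
\int_\rho \omega_{j+1}\cdots\omega_k=\frac{\prod_{l}\langle\omega_l,\rho_1\rangle}{(k-j)!}=0 \quad\text{whenever } k>j,
\]
because $\langle\omega,b\rangle=0$ for any trivial arrow $b$. For the empty word the integral equals $1$ by convention.
\item Apply Proposition~\ref{prop:ii-conca-path} twice, once to $\alpha|_{[0,i]}\star(\rho\star\alpha|_{[i:]})$ and once to $\rho\star\alpha|_{[i:]}$. Every term in which $\rho$ receives a nonempty block of forms vanishes. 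This yields
\[
\int_{\widehat{\alpha}^{\pm i}}\omega_1\cdots\omega_r=\sum_{j=0}^r\int_{\alpha|_{[0,i]}}\omega_1\cdots\omega_j\int_{\alpha|_{[i:]}}\omega_{j+1}\cdots\omega_r .
\]
\item Apply Proposition~\ref{prop:ii-conca-path} once more to $\alpha=\alpha|_{[0,i]}\star\alpha|_{[i:]}$. The right-hand side of step~2 is then exactly $\int_\alpha\omega_1\cdots\omega_r$.
\end{enumerate}

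The only real obstacle is bookkeeping. First, one must confirm that Proposition~\ref{prop:ii-conca-path} applies to pieces of length $0$, where the convention $\int 1=1$ and empty sums give $0$; the proof of that proposition goes through verbatim in this case. Second, one must check that the cut-and-concatenate identity reproduces the associated arrows of $\widehat{\alpha}^{\pm i}$ correctly, with the inserted arrow in position $i+1$ and the remaining arrows shifted by one. Both points follow directly from the definitions of cut and concatenation.
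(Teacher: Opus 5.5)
Your proposal is correct and follows essentially the paper's route: the paper states this as an immediate corollary of the concatenation formula (Proposition~\ref{prop:ii-conca-path}), in which the inserted trivial arrow contributes only to the empty word because $\langle\omega,b\rangle=0$ for a trivial arrow $b$. Your treatment of the length-$0$ pieces at $i=0,n$ and of the two orientations $\pm i$ simply spells out details the paper leaves implicit.
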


\begin{proposition}\label{prop:ii-inv-path}
Let $\alpha\colon I_n\to G$ a path map on $G$.
	\begin{equation*}
		\int_{\alpha^{-1}}\omega_1\cdots\omega_r=(-1)^r\int_{\alpha}\omega_r\cdots\omega_1.
	\end{equation*}
\end{proposition}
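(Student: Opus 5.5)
The identity will follow from a direct reindexing of the defining sum, once we know how the associated arrows of $\alpha^{-1}$ relate to those of $\alpha$.

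\textbf{Step 1 (associated arrows of the inverse).} Since $\alpha^{-1}(i)=\alpha(n-i)$ on $\widehat{I}_n$, the $t$-th step of $\alpha^{-1}$ runs from $\alpha(n-t+1)$ to $\alpha(n-t)$. This is the $(n-t+1)$-th step of $\alpha$ traversed backwards. By the convention defining $\alpha_i$ and the extended pairing $\langle\omega,a^{-1}\rangle=-\omega(a)$, we get
\[
\langle \omega,(\alpha^{-1})_t\rangle=-\langle\omega,\alpha_{n+1-t}\rangle
\]
for every $1$-form $\omega$. The same holds for trivial arrows, where both sides vanish. To check this, distinguish whether the arrow between $n-t$ and $n-t+1$ in $I_n$ points forward or backward; the definition of $\widehat{I}_n$ ensures the orientations match up.

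\textbf{Step 2 (reindexing).} Substitute Step 1 into Definition~\ref{iteratedintegral} to obtain
\[
\int_{\alpha^{-1}}\omega_1\cdots\omega_r=(-1)^r\sum_{1\le t_1\le\cdots\le t_r\le n}\frac{\prod_{j=1}^r\langle\omega_j,\alpha_{n+1-t_j}\rangle}{\tau(t_1,\ldots,t_r)}.
\]
Now set $s_k=n+1-t_{r+1-k}$. The map $(t_1,\ldots,t_r)\mapsto(s_1,\ldots,s_r)$ is a bijection between non-decreasing sequences in $\{1,\ldots,n\}$. Under it, $\langle\omega_j,\alpha_{n+1-t_j}\rangle=\langle\omega_j,\alpha_{s_{r+1-j}}\rangle$, so the product becomes $\prod_{k}\langle\omega_{r+1-k},\alpha_{s_k}\rangle$. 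This is exactly the summand for the word $\omega_r\cdots\omega_1$.

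\textbf{Step 3 (volume number).} Because $s$ is obtained from $t$ by an order-reversing bijection, the multiplicities of values are preserved. By the Remark expressing $\tau$ as $\prod_k\varphi_k!$, it follows that $\tau(s_1,\ldots,s_r)=\tau(t_1,\ldots,t_r)$. This completes the identity.

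The only delicate point is Step 1, namely checking the sign convention for the associated arrows of $\alpha^{-1}$ through the definition of $\widehat{I}_n$. The remaining steps are routine bookkeeping.
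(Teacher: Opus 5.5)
Your proof is correct and follows essentially the same route as the paper: the sign identity $\langle\omega,(\alpha^{-1})_t\rangle=-\langle\omega,\alpha_{n+1-t}\rangle$ and the symmetry $\tau(t_1,\ldots,t_r)=\tau(n+1-t_r,\ldots,n+1-t_1)$, followed by reindexing the defining sum. You spell out the order-reversing bijection $s_k=n+1-t_{r+1-k}$ and the orientation check via $\widehat{I}_n$ more explicitly than the paper does.
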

\begin{proof}
From the definition of inverse path map, $(\alpha^{-1})_i=\alpha_{n+1-i}^{-1}$. Therefore, {there are properties} $\langle\omega_i,(\alpha^{-1})_j\rangle=-\langle\omega_i,\alpha_{n+1-j}\rangle$. Moreover the definition of volume number implies
\begin{equation*}
\tau(t_1,\cdots,t_r)=\tau(n+1-t_r,\cdots,n+1-t_1).
\end{equation*}
Hence there are
\begin{equation*}
	\begin{aligned}
&\int\limits_{\alpha^{-1}}\omega_1\cdots\omega_r=\sum_{\substack{1\leq t_1\leq\cdots\leq t_r\leq n}}\frac{\prod\limits_{i=1}^r\langle{\omega_i},(\alpha^{-1})_{t_i}\rangle}{\tau(t_1,\cdots,t_r)}\\
	=(-1)^r&\int\limits_{\alpha}\omega_r\cdots\omega_1=(-1)^r\sum_{\substack{1\leq t_1\leq\cdots\leq t_r\leq n}}\frac{\prod\limits_{{i=1}}^r\langle\omega_i,\alpha_{n+1-t_i}\rangle}{\tau(t_1,\cdots,t_r)}.
	\end{aligned}
\end{equation*}
\end{proof}

Recall that a permutation $\sigma$ of $(r+s)$ letters is called a $(r,s)$-shuffle if and only if $\sigma^{-1}(1)< \cdots<\sigma^{-1}(r)$ and $\sigma^{-1}(r+1)<\cdots<\sigma^{-1}(r+s)$.

\begin{proposition}\label{prop:ii-mult}
	Let $\omega_1,\cdots,\omega_r,\cdots,\omega_{r+s}$ be 1-forms on $G$.
	\begin{equation}\label{eq:ii-mult}
	\int_{\alpha}\omega_1\cdots\omega_r\int_{\alpha}\omega_{r+1}\cdots\omega_{r+s}=\sum_{\sigma \in Sh(r,s)}\int_{\alpha}\omega_{\sigma (1)}\cdots\omega_{\sigma(r+s)},
    \end{equation}
    where $\sigma$ {runs} over all $(r,s)$-shuffles.
\end{proposition}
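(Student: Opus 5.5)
The plan is to prove \eqref{eq:ii-mult} by induction on the length $n$ of $\alpha$. Proposition~\ref{prop:ii-conca-path} reduces the inductive step to a path of length $n-1$ concatenated with a single step, so the one-step case together with a purely combinatorial compatibility between shuffles and deconcatenation suffices. Throughout, write $u=\omega_1\cdots\omega_r$, $v=\omega_{r+1}\cdots\omega_{r+s}$, and $u\sqcup v$ for the formal sum $\sum_{\sigma\in Sh(r,s)}\omega_{\sigma(1)}\cdots\omega_{\sigma(r+s)}$. Extend $\int_\alpha$ linearly to such sums, with $\int_\alpha 1=1$.

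\emph{Base cases.} For $n=0$ every integral with $r\ge 1$ vanishes, so the identity is trivial; the same holds when $r=0$ or $s=0$. For $n=1$, example (3) after Definition~\ref{iteratedintegral} gives, with $c_i=\langle\omega_i,\alpha_1\rangle$,
\[
\int_\alpha u\int_\alpha v=\frac{c_1\cdots c_r}{r!}\cdot\frac{c_{r+1}\cdots c_{r+s}}{s!}.
\]
Each of the $\binom{r+s}{r}$ shuffles contributes $\frac{c_1\cdots c_{r+s}}{(r+s)!}$. Since $\binom{r+s}{r}/(r+s)!=1/(r!\,s!)$, the two sides agree. Trivial arrows give $c_i=0$ and are harmless.

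\emph{Inductive step.} Write $\alpha=\alpha'\star\gamma$, where $\alpha'=\alpha|_{[0,n-1]}$ and $\gamma=\alpha|_{[n-1:]}$ has length one. By Proposition~\ref{prop:ii-conca-path},
\[
\int_\alpha u\int_\alpha v=\sum_{i=0}^{r}\sum_{j=0}^{s}\Bigl(\int_{\alpha'}u_{\le i}\int_{\alpha'}v_{\le j}\Bigr)\Bigl(\int_{\gamma}u_{>i}\int_{\gamma}v_{>j}\Bigr),
\]
where $u_{\le i}$ and $u_{>i}$ denote the prefix and suffix of the word $u$ at position $i$. The induction hypothesis applies to the first bracket and the base case to the second, turning the term indexed by $(i,j)$ into $\int_{\alpha'}(u_{\le i}\sqcup v_{\le j})\int_\gamma(u_{>i}\sqcup v_{>j})$. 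On the other side, applying Proposition~\ref{prop:ii-conca-path} to each shuffled word gives
\[
\sum_{\sigma\in Sh(r,s)}\int_\alpha\omega_{\sigma(1)}\cdots\omega_{\sigma(r+s)}=\sum_{\sigma}\sum_{k=0}^{r+s}\int_{\alpha'}(\text{first }k\text{ letters})\int_\gamma(\text{remaining letters}).
\]

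\emph{Main obstacle: the combinatorial bijection.} The crux is to match these two expansions term by term. Pairs $(\sigma,k)$ must correspond bijectively to quadruples $(i,j,\sigma',\sigma'')$ with $i+j=k$, $\sigma'\in Sh(i,j)$ a shuffle of $u_{\le i}$ with $v_{\le j}$, and $\sigma''\in Sh(r-i,s-j)$ a shuffle of $u_{>i}$ with $v_{>j}$. Given $(\sigma,k)$, set $i$ to be the number of letters of $u$ among the first $k$ positions, let $j=k-i$, and take $\sigma',\sigma''$ to be the induced interleavings of the first $k$ and last $r+s-k$ positions. The inverse map simply concatenates the two interleavings. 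This is the usual statement that the deconcatenation coproduct is multiplicative for the shuffle product.

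Checking that these maps are mutually inverse, and that the words produced on both sides coincide, is routine bookkeeping once the bijection is set up. Summing over it gives equality of the two expansions and completes the induction.
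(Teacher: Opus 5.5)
Your proof is correct, but it follows a different route from the paper's. The paper never uses Proposition~\ref{prop:ii-conca-path} here. It fixes index sequences $I'=(t_1\le\cdots\le t_r)$ and $I''=(t_{r+1}\le\cdots\le t_{r+s})$, treats $\prod_i\langle\omega_i,\alpha_{t_i}\rangle$ as a formal monomial, and compares its coefficient on the two sides in one step:
\begin{itemize}
\item on the left the coefficient is $\prod_k 1/(\varphi_k(I')!\,\varphi_k(I''))!$;
\item on the right the merged sequence has $\tau=\prod_k(\varphi_k(I')+\varphi_k(I''))!$, and exactly $\prod_k\binom{\varphi_k(I')+\varphi_k(I'')}{\varphi_k(I')}$ shuffles produce that monomial, so the factors cancel.
\end{itemize}

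You instead localize this count to a single arrow, where it reduces to $\binom{r+s}{r}/(r+s)!=1/(r!\,s!)$. You then transport it along the path using the concatenation formula and the fact that deconcatenation is multiplicative for the shuffle product. The arithmetic is the same: the paper's product over positions $k$ is your product over arrows.

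Your version is more structural. It shows that each one-step integral is a character of $\Sh(G)$, and that characters are closed under the convolution which Proposition~\ref{prop:ii-conca-path} identifies with path concatenation. This also explains why the weights $1/\tau$ are the natural ones, and it uses the same shuffle--deconcatenation compatibility that the paper relies on later for the Hopf structure.

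The paper's argument is more self-contained: it needs no induction and no appeal to the concatenation formula or the shuffle bialgebra. The price is a global orbit count, which the paper phrases somewhat loosely.

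You were right to state your bijection $(\sigma,k)\leftrightarrow(i,j,\sigma',\sigma'')$ on permutations rather than on words. That is what keeps the multiplicities correct when some of the $\omega_i$ coincide.
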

\begin{remark}
The multiplication in this proposition is referred to the \emph{shuffle multiplication}, namely
\begin{equation*}
\omega_1\cdots\omega_r \circ \omega_{r+1}\cdots\omega_{r+s}
:=
\sum_{\substack{\sigma^{-1}(1)< \cdots<\sigma^{-1}(r)\\
\sigma^{-1}(r+1)<\cdots<\sigma^{-1}(r+s)}}
\omega_{\sigma(1)}\cdots\omega_{\sigma(r+s)}.
\end{equation*}
With this notation, the above identity can be rewritten as
\begin{equation*}
\int_{\alpha}\omega_1\cdots\omega_r\;
\int_{\alpha}\omega_{r+1}\cdots\omega_{r+s}
=
\int_{\alpha}\omega_1\cdots\omega_r \circ \omega_{r+1}\cdots\omega_{r+s}.
\end{equation*}
\end{remark}
\begin{proof}
The proof is obtained by comparing the coefficients of the term
\begin{equation}\label{eq:given-term}
\langle\omega_1,\alpha_{t_1}\rangle\cdots\langle\omega_{r+s},\alpha_{t_{r+s}}\rangle.
\end{equation}

Let $I=(t_1,\cdots,t_{r+s})$ be a non-decreasing sequence with $1\leq t_i\leq n$. Then $I$ is the union of two non-decreasing subsequences
$I'=(t_1\leq\cdots\leq t_r)$ and $I''=(t_{r+1}\leq\cdots\leq t_{r+s})$.
Clearly, $
\varphi_i(I)=\varphi_i(I')+\varphi_i(I'')$ for $1 \leq i$.

The left-hand side of \eqref{eq:ii-mult} is given by
\begin{equation*}
	\begin{aligned}
\int_{\alpha}\omega_1\cdots\omega_r\int_{\alpha}\omega_{r+1}\cdots\omega_{r+s}
=\sum_{\substack{1\leq t_1\cdots\leq t_r\leq n\\1\leq t_{r+1}\leq\cdots t_{r+s}\leq n}}\frac{\prod\limits_{i=1}^r\langle \omega_i,\alpha_{t_i}\rangle\prod\limits_{i=r+1}^{r+s}\langle \omega_{i},\alpha_{t_{i}}\rangle}{\tau(t_1,\cdots,t_r)\tau(t_{r+1},\cdots,t_{r+s})}.
	\end{aligned}
\end{equation*}
Hence the coefficient of \eqref{eq:given-term} on the left-hand side equals
\[
\frac{1}{\prod_{i\geq 1}\varphi_i(I')!\,\varphi_i(I'')!}.
\]

The right-hand side of \eqref{eq:ii-mult} can be written as
\begin{equation*}
	\begin{aligned}
\sum_{\sigma}\int_{\alpha}\omega_{\sigma 1}\cdots\omega_{\sigma(r+s)}
=\sum_{1\leq t_1\leq\cdots\leq t_{r+s}\leq n}\sum_{\substack{\sigma \in Sh(r,s)}}\frac{\prod\limits_{i=1}^{r+s}\langle \omega_i,\alpha_{t_{\sigma^{-1}(i)}}\rangle}{\tau(t_1,\cdots,t_{r+s})}.
	\end{aligned}
\end{equation*}
The permutations of $(r+s)$-letters that stabilize the sequence $I$ form a group $S$ with totality
\begin{equation*}
	\Pi_{i=1}^n(\varphi_i(I')+\varphi_i(I''))!.
\end{equation*}
Observe that the group $S$ {exchanges} the numbers with equal value between $I'$ and $I''$. Clearly, the stabilizer of $I'$ and $I''$ has totality
\begin{equation*}
	\Pi_{i=1}^n\varphi_i(I')!\varphi_i(I'')!.
\end{equation*}
Then the $(r,s)$-shuffles {correspond} to the orbits of $S$-action on $I$. Hence the number of the $(r,s)$-shuffles that stabilize $I$ is
\begin{equation*}
	\Pi_{i=1}^n\frac{(\varphi_i(I')+\varphi_i(I''))!}{\varphi_i(I')!\varphi_i(I'')!}.
\end{equation*}
{Note that $\tau(I)=\Pi_{i=1}^n(\varphi_i(I')+\varphi_i(I'')){!}$}. Therefore the coefficients on the right side {are} the following
\begin{equation*}
	\Pi_{i=1}^n\frac{1}{(\varphi_i(I')+\varphi_i(I''))!}\cdot \frac{(\varphi_i(I')+\varphi_i(I''))!}{\varphi_i(I')!\varphi_i(I'')!}=\Pi_{i=1}^n\frac{1}{\varphi_i(I')!\varphi_i(I'')!}.
\end{equation*}
\end{proof}
Assume that $f=\sum_{v\in G_0}f(v)e^v \in \Acal(G)$. The differential of $0$-form was defined in \cite{grigor2015cohomology} that $de^v=\sum_{\substack{a\in G_1\\  t(a)=v}}e^a-\sum_{\substack{a\in G_1\\  s(a)=v}}e^a$, therefore
\begin{equation}\label{eq:df}
	df=\sum_{v\in G_0}(\sum_{\substack{a\in G_1\\  t(a)=v}}f(v)e^a-\sum_{\substack{a\in G_1\\  s(a)=v}}f(v)e^a)=\sum_{a\in G_1}(f({t(a)})-f({s(a)}))e^a.
\end{equation}
The following Lemma holds.

\begin{lemma}\label{lem:int-df}
Let $f\in \Acal(G)$ be any function and $\alpha\colon I_n\to G$ be a path map
on $G$.
\begin{equation}
	\int_{\alpha}df=f(\alpha(n))-f(\alpha(0)).
\end{equation}
In particular, if $\alpha$ is a loop then $\int_{\alpha}df=0$.
\end{lemma}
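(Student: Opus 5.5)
The plan is to reduce the statement to a telescoping sum. The iterated path integral with $r=1$ is $\int_{\alpha}\omega=\sum_{i=1}^{n}\langle\omega,\alpha_i\rangle$, as in example (1) after Definition~\ref{iteratedintegral}; here the volume number is trivially $1$. Take $\omega=df$ and compute each term $\langle df,\alpha_i\rangle$ from formula \eqref{eq:df}, namely $df(a)=f(t(a))-f(s(a))$ for $a\in G_1$.

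The key step is to check, case by case, that
\[
\langle df,\alpha_i\rangle=f(\alpha(i))-f(\alpha(i-1))
\]
for every $1\leq i\leq n$. There are three cases.
\begin{itemize}
\item If the $i$-th arrow of $I_n$ goes $i-1\to i$ and $\alpha(i-1)\neq\alpha(i)$, then $\alpha_i$ is the arrow $a=\alpha(i-1)\to\alpha(i)\in G_1$. Hence $\langle df,a\rangle=f(t(a))-f(s(a))=f(\alpha(i))-f(\alpha(i-1))$.
\item If the arrow goes $i\to i-1$ with distinct images, then $\alpha_i=a^{-1}$ for $a=\alpha(i)\to\alpha(i-1)$. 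By the defining convention $\langle\omega,a^{-1}\rangle=-\omega(a)$, we get $-(f(\alpha(i-1))-f(\alpha(i)))$, which is the same expression.
\item If $\alpha(i-1)=\alpha(i)$, then $\alpha_i$ is a trivial arrow, so the pairing is $0$. This equals $f(\alpha(i))-f(\alpha(i-1))=0$.
\end{itemize}

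Summing over $i$ then telescopes to $f(\alpha(n))-f(\alpha(0))$. The loop statement follows immediately, since $\alpha(0)=\alpha(n)$.

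The only potential subtlety is the bookkeeping of orientation conventions for $\alpha_i$ and the trivial-arrow case. Formula \eqref{eq:df} already handles well-definedness of $df$ as an element of $\Omega^1(G)=\Kbb G_1$, so no real obstacle is expected.
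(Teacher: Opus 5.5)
Your proposal is correct and follows the same route as the paper: specialize to $r=1$, use formula \eqref{eq:df} for $df(a)=f(t(a))-f(s(a))$, and telescope the sum $\sum_{t=1}^n\langle df,\alpha_t\rangle$. Your three-case check (forward arrow, reversed arrow via $\langle\omega,a^{-1}\rangle=-\omega(a)$, trivial arrow) spells out what the paper leaves implicit when it writes $e^a(\alpha_t)$.
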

\begin{proof}
{From Equation (\ref{eq:df}), it is clear that
\begin{equation*}
	\int_{\alpha}df=\sum_{t=1}^n\sum_{a\in G_1}(f({t(a)})-f({s(a)}))e^a(\alpha_t)=f(\alpha(n))-f(\alpha(0)).
\end{equation*}
The last assertion follows naturally.}
\end{proof}

\begin{proposition}\label{prop:iter-mul-func}
Let $c\colon  \Lambda_0(G) \to \Kbb$  be a constant function valued at $k\in \Kbb$.
    \begin{equation*}
        \begin{aligned}
            &\int_{\alpha}\omega_1\cdots\omega_{r-1} (\omega_r\cdot c)\omega_{r+1}\cdots\omega_{m}
		=k\int_{\alpha}\omega_1\cdots\omega_{r-1}\omega_r\omega_{r+1}\cdots\omega_{m},\\
		&\int_{\alpha}\omega_1\cdots\omega_{r-1} (c\cdot\omega_r)\omega_{r+1}\cdots\omega_{m}
        =k\int_{\alpha}\omega_1\cdots\omega_{r-1}\omega_r\omega_{r+1}\cdots\omega_{m}.
        \end{aligned}
    \end{equation*}
\end{proposition}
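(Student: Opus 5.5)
The statement is a direct computation from Definition~\ref{iteratedintegral} together with the bimodule formulas $(\omega\cdot f)(a)=f(t(a))\omega(a)$ and $(f\cdot\omega)(a)=f(s(a))\omega(a)$ recalled at the end of the subsection on differential forms. The plan is to show that, for each step $\alpha_t$ of the path map, the pairing satisfies
\[
\langle \omega_r\cdot c,\alpha_t\rangle = k\,\langle\omega_r,\alpha_t\rangle
\qquad\text{and}\qquad
\langle c\cdot\omega_r,\alpha_t\rangle = k\,\langle\omega_r,\alpha_t\rangle .
\]
One then substitutes this factor-by-factor identity into the defining sum.

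First I check the pointwise identity by splitting into the three possible forms of $\alpha_t$:
\begin{enumerate}
\item If $\alpha_t=a\in G_1$, then $\langle\omega_r\cdot c,a\rangle=c(t(a))\omega_r(a)=k\,\omega_r(a)$, since $c$ is constant.
\item If $\alpha_t=a^{-1}$ for $a\in G_1$, then by the convention $\langle\omega,a^{-1}\rangle=-\omega(a)$ we get $\langle\omega_r\cdot c,a^{-1}\rangle=-c(t(a))\omega_r(a)=k\,\langle\omega_r,a^{-1}\rangle$.
\item If $\alpha_t$ is a trivial arrow, both sides vanish.
\end{enumerate}
The same three cases, using $s(a)$ in place of $t(a)$, handle $c\cdot\omega_r$. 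Here I use that $\omega\cdot c$ is computed on the representative $\sum_a\omega(a)e^a$ of the form.

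Next, in the sum defining $\int_\alpha\omega_1\cdots(\omega_r\cdot c)\cdots\omega_m$, each summand is indexed by $1\le t_1\le\cdots\le t_m\le n$. Only the $r$-th factor changes, and by the pointwise identity it gains a factor $k$. The volume numbers $\tau(t_1,\dots,t_m)$ do not depend on the forms, so $k$ factors out of the whole sum, which gives both equalities.

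I expect no real obstacle. The only point needing care is the sign convention for inverted arrows, which is linear and therefore commutes with multiplication by the constant $k$. One might also worry about well-definedness of $\omega\cdot c$ on the quotient $\Omega^1(G)$, but $\Omega^1(G)=\Kbb G_1^{\ast}$ has no quotient in degree $1$, so this is automatic.
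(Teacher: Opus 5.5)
Your proposal is correct and is the direct computation the paper has in mind when it simply says ``The proof is clear''. Since $c$ is constant, $\omega_r\cdot c=c\cdot\omega_r=k\,\omega_r$ as $1$-forms, and the factor $k$ pulls out because the iterated integral is linear in each slot, which is what your case-by-case check of $\langle\omega_r\cdot c,\alpha_t\rangle$ establishes.
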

\begin{proof}
The proof is clear.
\end{proof}
\begin{remark}
This highlights a difference between iterated path integrals on digraphs and those on smooth manifolds. In the manifold setting, the corresponding identity holds for any smooth map~\cite{chen1968algebraic}, \cite{chen1971algebras}, i.e.
\begin{equation*}
		\begin{aligned}
		&\int_{\alpha}\omega_1\cdots\omega_{r-1} (\omega_r\cdot f)\omega_{r+1}\cdots\omega_{m}\\
		=&f(\alpha(0))\int_{\alpha}\omega_1\cdots\omega_{r-1}\omega_r\omega_{r+1}\cdots\omega_{m}
+\int_{\alpha}(\omega_1\cdots\omega_{r-1}\circ df)\omega_r\omega_{r+1}\cdots\omega_{m}.
		\end{aligned}
\end{equation*}
However, the corresponding analogue for digraphs remains unclear in general.
\end{remark}
\subsection{Elementary Equivalence}
\begin{definition}
Two path maps $\alpha$ and $\beta$ are said to be \emph{one-step elementarily equivalent} if one can be transformed into the other by applying the following two operations:
\begin{enumerate}
	\item $\gamma'\star\gamma\star\gamma^{-1}\star\gamma''\sim \gamma'\star\gamma''$, where $\gamma$, $\gamma'$ and $\gamma''$ are allowed to be trivial path maps;
	\item $\gamma'\star r \star\gamma''\sim \gamma'\star\gamma''$, where $r$ is a trivial arrow.
\end{enumerate}
\end{definition}

Two path maps are called \emph{elementarily equivalent} if one can be obtained from the other by a finite sequence of one-step elementary equivalences.

A \emph{reduced path map} $\beta \colon I_n \to G$ on $G$ is a path map such that for every $1\leq i \leq n$, neither $\beta_i$ nor $\beta_j \star \beta_{j+1}$ is elementarily equivalent to a trivial path map.

The following proposition shows that iterated path integrals are invariant under elementary equivalence.
\begin{proposition}\label{prop:iter-ele-equ}
Let $\alpha$, $\beta$ and $\gamma$ be path maps on digraph $G$. For any 1-forms $\omega_1,\cdots,\omega_r$, $r\geq 1$, $\int_{\beta\star\alpha\star\alpha^{-1}\star\gamma}\omega_1\cdots\omega_r=\int_{\beta\star\gamma}\omega_1\cdots\omega_r$. In particular, $\int_{\alpha\star\alpha^{-1}}\omega_1\cdots\omega_r=0$.
\end{proposition}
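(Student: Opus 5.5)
The plan is to reduce everything to the special case $\int_{\alpha\star\alpha^{-1}}\omega_1\cdots\omega_r=0$ for $r\geq 1$, using Proposition~\ref{prop:ii-conca-path} twice. First apply it to $\beta\star(\alpha\star\alpha^{-1})\star\gamma$, splitting the word $\omega_1\cdots\omega_r$ into three consecutive pieces $\omega_1\cdots\omega_i$, $\omega_{i+1}\cdots\omega_j$, $\omega_{j+1}\cdots\omega_r$ with $0\leq i\leq j\leq r$. This gives
\begin{equation*}
\int_{\beta\star\alpha\star\alpha^{-1}\star\gamma}\omega_1\cdots\omega_r=\sum_{0\leq i\leq j\leq r}\int_\beta\omega_1\cdots\omega_i\int_{\alpha\star\alpha^{-1}}\omega_{i+1}\cdots\omega_j\int_\gamma\omega_{j+1}\cdots\omega_r .
\end{equation*}
The terms with $i=j$ use $\int_{\alpha\star\alpha^{-1}}1=1$ and sum to $\int_{\beta\star\gamma}\omega_1\cdots\omega_r$, again by Proposition~\ref{prop:ii-conca-path}. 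So it suffices to show that all terms with $j>i$ vanish, i.e.\ that the special case holds for every nonempty word.

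For the special case, apply Proposition~\ref{prop:ii-conca-path} to $\alpha\star\alpha^{-1}$ and then Proposition~\ref{prop:ii-inv-path} to each factor over $\alpha^{-1}$. This gives
\begin{equation*}
\int_{\alpha\star\alpha^{-1}}\omega_1\cdots\omega_r=\sum_{i=0}^r(-1)^{r-i}\int_\alpha\omega_1\cdots\omega_i\int_\alpha\omega_r\omega_{r-1}\cdots\omega_{i+1}.
\end{equation*}
This is exactly the antipode identity in the shuffle algebra: the convolution $\sum_i w_{\le i}\circ S(w_{>i})$, with $S(\omega_{i+1}\cdots\omega_r)=(-1)^{r-i}\omega_r\cdots\omega_{i+1}$, equals $\varepsilon(w)=0$ for a nonempty word. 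Since Proposition~\ref{prop:ii-mult} turns products of integrals over the same $\alpha$ into integrals of shuffle products, it is enough to verify the purely combinatorial identity
\begin{equation*}
\sum_{i=0}^r(-1)^{r-i}\,\omega_1\cdots\omega_i\circ\omega_r\cdots\omega_{i+1}=0 .
\end{equation*}

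I would prove this identity by a sign-reversing pairing, which I expect to be the main step requiring care. Each shuffle term interleaves a prefix $\omega_1\cdots\omega_i$ with the reversed suffix $\omega_r\cdots\omega_{i+1}$. Consider the letter $\omega_{i+1}$ (the last letter of the reversed suffix) and the letter $\omega_i$ (the last letter of the prefix); whichever of these occurs later in the shuffled word is the final letter of its block. Moving that letter to the other block changes $i$ by one and flips the sign, while producing the same word. This move is an involution on pairs $(i,\text{shuffle})$ with no fixed points when $r\geq1$. The edge cases $i=0$ and $i=r$ must be checked: when one block is empty, the move is forced and remains valid. Alternatively, I could avoid the bijection and argue by induction on $r$. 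Writing $w\circ\omega_r\cdots$ recursively via the last letter, the standard recursion $u a\circ v b=(u\circ vb)a+(ua\circ v)b$ lets terms cancel telescopically.

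Finally, the specialization $\beta=\gamma=$ trivial gives the ``in particular'' statement. I note that the reduction genuinely needs $r\geq1$, since for $r=0$ the integral is $1$ by convention.
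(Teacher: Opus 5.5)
Your proof is correct. The reduction to $\int_{\alpha\star\alpha^{-1}}\omega_1\cdots\omega_r=0$ via Proposition~\ref{prop:ii-conca-path} matches the paper's first step, but you prove the special case by a genuinely different route.

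The paper works directly with the index sequences $(t_1,\dots,t_r)$ of $\rho=\alpha\star\alpha^{-1}$ on $I_{2n}$. It groups them by the entries closest to the turning point $n+\tfrac12$, which sit at the mirrored steps $n-b$ and $n+b+1$, where $\rho_{n+b+1}=\alpha_{n-b}^{-1}$. Within a group the outer entries are fixed, and the $a$ innermost entries are split as $a-k$ copies of $n-b$ and $k$ copies of $n+b+1$. Because $\tau$ factors, each group sums to a multiple of $\sum_k(-1)^k\binom{a}{k}=0$.

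You instead expand $\alpha\star\alpha^{-1}$ with Propositions~\ref{prop:ii-conca-path} and~\ref{prop:ii-inv-path} and merge the products with Proposition~\ref{prop:ii-mult}. This reduces everything to the word identity $\sum_{i}(-1)^{r-i}\,\omega_1\cdots\omega_i\circ\omega_r\cdots\omega_{i+1}=0$, which is the antipode axiom for $\Sh(G)$. Your sign-reversing involution on pairs $(i,\text{shuffle})$ proves it correctly. Moving whichever of the labels $i$ or $i+1$ occurs later to the other block keeps both order constraints and flips the sign. The move is undone by the same rule, and the edge cases $i=0$ and $i=r$ behave as you say.

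What each approach buys:
\begin{itemize}
\item The paper's argument is self-contained at the level of the defining sum and does not use the shuffle formula.
\item Yours is more modular. It shows the cancellation is a formal consequence of the concatenation, reversal and shuffle identities, independent of $\alpha$.
\item Yours also makes visible that this proposition is the same fact that later makes $j_{\Qcal_x(G)}$ an antipode in Proposition~\ref{prop:Q-hopf}.
\end{itemize}
You could shorten the last step by simply citing that $\Sh(G)$ is a Hopf algebra with antipode $\omega_1\cdots\omega_r\mapsto(-1)^r\omega_r\cdots\omega_1$, which the paper itself quotes from the literature in Section~4.
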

\begin{proof}
From Proposition \ref{prop:ii-conca-path}, it is sufficient to show the special case where $\beta$ resp. $\gamma$ are trivial path {maps}. Assume that $\alpha\colon  I_n\to G$ and $\rho=\alpha\star\alpha^{-1}$. Then $\rho$ is the path map $I_{2n}\to G$. Let $\psi(t_1,\cdots,t_r) := \min_i \lvert t_i - (n+\tfrac{1}{2}) \rvert$,
let $\sigma(t_1,\cdots,t_r) := {\sharp}\{\, i \mid \lvert t_i - (n+\tfrac{1}{2}) \rvert = \psi(t_1,\cdots,t_r) \,\}$,
and let $\chi(t_1,\cdots,t_r) := \max \{\, i \mid t_i < n \text{ and } \lvert t_i - \tfrac{n+1}{2} \rvert \text{ is minimal} \,\}$.
\begin{equation*}
\begin{aligned}
    &\psi(t_1,\cdots,t_r)=\min\limits_{i=1}^r\{|t_i-(n+\frac{1}{2})|\},\\
     &\sigma(t_1,\cdots,t_r)=\sharp\{t\in \{ t_1,\cdots,t_r\}\colon |t-(n+\frac{1}{2})|=\psi(t_1,\cdots,t_r)\},\\
     &\chi(t_1,\cdots,t_r)=\max\{i\in \{1,\cdots,r\}:t_i<n,\psi(t_1\cdots,t_r)<n-t_i\}.
\end{aligned}
\end{equation*}
It is clear that $\psi(t_1,\cdots,t_r)\in\{\frac{1}{2},\frac{3}{2},\cdots,\frac{2n-1}{2}\}$ is a $\frac{\Zbb}{2}$-valued function, $\sigma(t_1,\cdots,t_r)\in \{1,\cdots,r\}$ is a $\Zbb$-valued function, and $\chi(t_1,\cdots,t_r)$ takes values in $\{1,\cdots,r\}$.

Consider sequences $I=(t_1,\cdots,t_r)$ such that $\psi(I)=b+\frac{1}{2}$, $\sigma(I)=a$, and $\chi(I)=N$. These terms can be partitioned into disjoint subsets according to their first $N$ entries and their last $r-(N+a)$ entries. In particular, one obtains $a$ sequences by fixing identical first $N$ entries and last $r-(N+a)$ entries; see Table \ref{tab:psib-sigmaa-sequence}.
\begin{table}[htbp]
\centering
\begin{tabular}{cccc}
\hline
$\substack{(t_1,\cdots,t_r)}$&first $N$-terms    & middle $a$-terms&last $r-(N+a)$-terms\\
\hline
&$t_1,\cdots,t_{N},$ & $n-b,\cdots, n-b,n-b,$    & $t_{N+a+1},\cdots,t_r$ \\
&$t_1,\cdots,t_{N},$ & $n-b,\cdots,n-b,n+b+1,$    & $t_{N+a+1},\cdots,t_r$ \\
&$\cdots$             & $\cdots$                    & $\cdots$ \\
&$t_1,\cdots,t_{N},$ & $n+b+1,\cdots,n+b+1,n+b+1,$ & $t_{N+a+1},\cdots,t_r$ \\
\hline
\end{tabular}
\caption{One collection of sequences $I$ with $\psi(I)=b+\frac{1}{2}$, $\sigma(I)=a$ and $\chi(I)=N$}\label{tab:psib-sigmaa-sequence}
\end{table}

They satisfy the properties $t_{N+a+1}>\max\{n-b,n+b+1\}$ and $t_{N}<\min\{n-b,n+b+1\}$. From the definition of path map concatenation, one has $\rho_i=\alpha_i$ for $i\leq n$ and $\rho_i=\alpha_{2n+1-i}^{-1}$ for $i\geq n+1$. Hence, by replacing indices $t_i>n$, that is, the entries $t_i$ behind $n+b+1$ in Table \ref{tab:psib-sigmaa-sequence}, with $(2n+1-t_i)$ in the sequence, the next equation follows, where $k=\varphi_{n+b-1}(I)$ denotes the number of occurrences of $(n+b-1)$ in the sequence.
\begin{equation*}
\begin{aligned}
     &\frac{\prod\limits_{i=1}^N\langle \omega_i,\rho_{t_i}\rangle\prod\limits_{i=N+1}^{N+a-k}\langle\omega_{i},\rho_{n-b}\rangle\prod\limits_{i=N+a-k+1}^{N+a}\langle\omega_{i},\rho_{n+b+1}\rangle\prod\limits_{i=N+a+1}^{r}\langle\omega_{i},\rho_{t_{i}}\rangle}{\tau(t_1,\cdots,t_{N},n-b,\cdots,n+b+1,t_{N+a+1},\cdots,t_r)}\\
    =&\frac{(-1)^{r-N+k}\prod\limits_{i=1}^N\langle \omega_i,\rho_{t_i}\rangle}{\tau(t_1,\cdots,t_{N})}\frac{\prod\limits_{i=N+1}^{N+a}\langle\omega_{i},\alpha_{n-b}\rangle}{k!(a-k)!}\frac{\prod\limits_{i=N+a+1}^{r}\langle\omega_{i},\alpha_{2n+1-t_{i}}\rangle}{\tau(t_{N+a+1},\cdots,t_r)}.
    \end{aligned}
\end{equation*}
Hence, the sum of $a$ terms in Table \ref{tab:psib-sigmaa-sequence} is the following
\begin{equation*}
    \begin{aligned}        0=\sum\limits_{k=0}^a&\frac{\prod\limits_{i=1}^N\langle \omega_i,\rho_{t_i}\rangle\prod\limits_{i=N+1}^{N+a-k}\langle\omega_{i},\rho_{n-b}\rangle\prod\limits_{i=N+a-k+1}^{N+a}\langle\omega_{i},\rho_{n+b+1}\rangle\prod\limits_{i=N+a+1}^{r}\langle\omega_{i},\rho_{t_{i}}\rangle}{\tau(t_1,\cdots,t_{N},n-b,\cdots,n+b+1,t_{N+a+1},\cdots,t_r)}\\
        =\sum\limits_{k=0}^a&\frac{(-1)^{r-N+k}}{k!(a-k)!}\frac{\prod\limits_{i=1}^N\langle \omega_i,\rho_{t_i}\rangle}{\tau(t_1,\cdots,t_{N})}\frac{\prod\limits_{i=N+1}^{N+a}\langle\omega_{i},\alpha_{n-b}\rangle}{1}\frac{\prod\limits_{i=N+a+1}^{r}\langle\omega_{i},\alpha_{2n+1-t_{i}}\rangle}{\tau(t_{N+a+1},\cdots,t_r)}.
    \end{aligned}
\end{equation*}
This came from the fact that the alternating sum of combinations vanishes, i.e.
\begin{equation*}
    \sum_{k=0}^a(-1)^k\binom{a}{k}=\sum_{k=0}^a\frac{a!(-1)^k}{k!(a-k)!}=0.
\end{equation*}
Moreover, the sum over all terms with $\psi(t_1,\cdots,t_r)=b+\frac{1}{2}$ and $\sigma(t_1,\cdots,t_r)=a$ also equals zero. Hence,
the iterated path integral vanishes, i.e.,
\begin{equation*}\label{eq:omega}
	\begin{aligned}
	&\ \int_{\rho}\omega_1\cdots\omega_r=\sum_{1\leq t_1\leq\cdots\leq t_r\leq 2n}\frac{\prod\limits_{i=1}^r\langle\omega_i,\rho_{t_i}\rangle}{\tau(I)}
	=\sum\limits_{b=0}^{n-1}\sum\limits_{a=1}^n\sum\limits_{\substack{I=(t_1,\cdots,t_r)\\\psi(I)=b+\frac{1}{2}\\\sigma(I)=a}}\frac{\prod\limits_{i=1}^r\langle\omega_i,\rho_{t_i}\rangle}{\tau(I)}=0 .
	\end{aligned}
\end{equation*}
\end{proof}

\begin{lemma}\label{lem:redd-path-nonzero}
Let $\alpha\colon I_n\to G$ be a reduced path map. If
\begin{equation*}
    \int_{\alpha}\omega_1\cdots\omega_r=0
\end{equation*}
for any 1-forms $\omega_1,\cdots,\omega_r$, then $\alpha$ is a trivial path map.
\end{lemma}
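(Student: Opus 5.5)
We argue by contraposition: assume $\alpha\colon I_n\to G$ is reduced with $n\geq 1$, and exhibit $1$-forms $\omega_1,\dots,\omega_n$ with $\int_\alpha\omega_1\cdots\omega_n\neq 0$. Since $\alpha$ is reduced, no step $\alpha_i$ is a trivial arrow; otherwise $\alpha_i$ alone would be elementarily equivalent to a trivial path. So each $\alpha_i$ is either an arrow $a_i\in G_1$ or the inverse $a_i^{-1}$ of one. Write $\alpha_i=a_i^{\epsilon_i}$ with $\epsilon_i=\pm 1$. Reducedness also forbids backtracking, which gives the key combinatorial fact: consecutive steps never satisfy $a_{i+1}=a_i$ with $\epsilon_{i+1}=-\epsilon_i$.

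The natural choice is $\omega_i=e^{a_i}$, the dual $1$-form of the $i$-th underlying arrow. Then $\langle\omega_i,\alpha_t\rangle=\epsilon_t$ if $a_t=a_i$ and $0$ otherwise. In the defining sum
\[
\int_\alpha e^{a_1}\cdots e^{a_n}=\sum_{1\le t_1\le\cdots\le t_n\le n}\frac{\prod_{i}\langle e^{a_i},\alpha_{t_i}\rangle}{\tau(t_1,\dots,t_n)},
\]
the diagonal term $t_i=i$ contributes $\prod_i\epsilon_i=\pm 1$ with $\tau=1$. The difficulty is that other non-decreasing sequences $(t_1,\dots,t_n)$ with $a_{t_i}=a_i$ for all $i$ may contribute terms that cancel it. This happens when arrows repeat along $\alpha$, for example a loop traversing the same arrow several times, or a pattern $a\,a^{-1}$ separated by other arrows.

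To avoid this, I would not take arbitrary $\omega_i$. Instead, view $\int_\alpha$ as a multilinear function of $(\omega_1,\dots,\omega_n)$ and show it is not identically zero. Expanding each $\omega_i=\sum_{a}x_{i,a}e^a$ with indeterminates $x_{i,a}$, the integral becomes a polynomial in the $x_{i,a}$. Its coefficient on the monomial $\prod_i x_{i,a_i}$ is
\[
\sum_{\substack{t_1\le\cdots\le t_n\\ a_{t_i}=a_i}}\frac{\prod_i\epsilon_{t_i}}{\tau(t)}.
\]
So it suffices to show this coefficient is nonzero, because over a field of characteristic $0$ (in particular an infinite field) a nonzero polynomial has a non-root, which yields specific forms with nonzero integral.

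The main obstacle is proving this coefficient is nonzero. Two alternatives are available. The first is a more refined monomial choice: use fewer forms, say $r<n$, and pick a monomial whose only contributing sequence is forced. The second is an induction on $n$ using the concatenation formula of Proposition \ref{prop:ii-conca-path}: write $\alpha=\alpha'\star\alpha_n$, and consider the component of $\int_\alpha\omega_1\cdots\omega_r$ of top degree in the last step. By induction $\alpha'$ is reduced and detected, and the no-backtracking condition $\alpha_n\neq\alpha_{n-1}^{-1}$ should prevent the new term from cancelling against lower ones. This mirrors Chen's argument that iterated integrals separate reduced words, with the signs $\epsilon_i$ giving a map to the free group on $G_1$. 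Concretely, I would identify reduced paths with reduced words in the free group on $G_1$, map this into the completed tensor algebra $\Kbb\langle\langle X_a\rangle\rangle$ via $a\mapsto\exp(X_a)$, and check that the generating series $\sum_{w}\int_\alpha w\cdot X_w$ equals $\prod_i\exp(\epsilon_iX_{a_i})$. The point of this identity is that the factor $1/\tau$ is exactly the exponential coefficient. The Magnus-type injectivity of the free group into this algebra (valid in characteristic $0$) then shows the series is $\neq 1$ for a nontrivial reduced word, so some coefficient with $r\ge1$ is nonzero.
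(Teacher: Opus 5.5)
Your final (Magnus-type) argument is correct. However, the obstacle that pushed you toward it does not exist, and the route you abandoned is exactly the paper's proof. The paper takes $\omega_i=e^{a_i}$ with $r=n$ and observes that only $(t_1,\dots,t_n)=(1,\dots,n)$ contributes.

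The observation you missed is a pigeonhole one. A non-decreasing sequence of length $n$ with values in $\{1,\dots,n\}$, other than $(1,\dots,n)$, is not strictly increasing, so $t_i=t_{i+1}$ for some $i$. A nonzero term then forces $a_i=a_{t_i}=a_{i+1}$. Since $G$ has no self-loops, two consecutive steps along the same arrow must have $\epsilon_{i+1}=-\epsilon_i$, i.e.\ backtracking, which reducedness excludes. So repetitions at non-adjacent positions (a cycle traversed twice, or $a\cdots a^{-1}$) never produce extra terms, and $\int_\alpha e^{a_1}\cdots e^{a_n}=\prod_i\epsilon_i=\pm1$.

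Note also that, by multilinearity, the coefficient of $\prod_i x_{i,a_i}$ in your polynomial reformulation is literally $\int_\alpha e^{a_1}\cdots e^{a_n}$. That detour, including the appeal to non-roots over an infinite field, changes nothing. The two alternatives you list before the Magnus argument are also unnecessary.

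That said, the Magnus route is a valid and genuinely different proof. The identity $\sum_w(\int_\alpha w)X_w=\prod_i\exp(\epsilon_iX_{a_i})$ follows from example (3) after Definition~\ref{iteratedintegral} together with Proposition~\ref{prop:ii-conca-path}. Injectivity of $a\mapsto\exp(X_a)$ on the free group on $G_1$ follows from the classical embedding $a\mapsto 1+X_a$ composed with the continuous automorphism $X_a\mapsto\exp(X_a)-1$; this is where $\mathbb{Q}\subseteq\Kbb$ is used. A nonempty reduced path is a nontrivial freely reduced word, so some coefficient of positive degree is nonzero.

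The two approaches trade off as follows. The paper's computation is self-contained and gives an explicit witness of degree $n$ with value $\pm1$. Yours relies on an outside theorem and does not say which degree detects $\alpha$. In exchange it is structural: it shows that $\alpha\mapsto\sum_w(\int_\alpha w)X_w$ is injective on elementary equivalence classes of paths from $x$ to $y$. That gives the nontrivial direction of Theorem~\ref{thm:paths-equ-int} directly, without passing through $\alpha\star\beta^{-1}$.
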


\begin{proof}
Assume that there exists a reduced path map $\alpha$ such that
\[
\int_{\alpha} \omega_1 \cdots \omega_r = 0
\]
for all $1$-forms $\omega_1, \ldots, \omega_r$ on $G$.
It follows that $\alpha_i \neq \alpha_{i+1}$ for all $i$.
Since $\alpha$ is reduced, the concatenation
$\alpha_i \star \alpha_{i+1}$ is not elementarily equivalent to a trivial path map for any $0 \le i \le n-1$.

For any arrow $\alpha\in G$, let $e^{\alpha_i}$ be the $1$-form defined by
\[
e^{\alpha}(\alpha)=1,\qquad
e^{\alpha}(\alpha^{-1})=-1,
\]
and $e^{\beta}=0$ for all arrow $\alpha \neq \beta  \in G $.
Consider the iterated path integral
\[
\int_{\alpha} e^{\alpha_1} e^{\alpha_2} \cdots e^{\alpha_n}
=
\sum_{1 \le t_1 \le \cdots \le t_n \le n}
\frac{\prod_{i=1}^n \langle e^{\alpha_i}, \alpha_{t_i} \rangle}
{\tau(t_1,\dots,t_n)}.
\]

Only the term $(t_1,\dots,t_n)=(1,\dots,n)$ contributes nontrivially.
All other terms vanish since no two adjacent arrows on $\alpha$ are equal or reverse to each other. 

Hence,
\[
\int_{\alpha} e^{\alpha_1} e^{\alpha_2} \cdots e^{\alpha_n}
= \pm 1 \neq 0,
\]
a contradiction. The lemma follows.
\end{proof}

\vspace{1em}

\begin{theorem}\label{thm:paths-equ-int}
Let $\alpha,\beta \in {P_{x,y}(G)}$. Two path maps $\alpha$ and $\beta$ are elementarily equivalent if and only if
\[
\int_{\alpha}\omega_1\cdots\omega_r=\int_{\beta}\omega_1\cdots\omega_r
\]
for any $1$-forms $\omega_1,\cdots,\omega_r$, $r\geq 1$.
\end{theorem}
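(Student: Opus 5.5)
The forward direction is immediate from results already established. If $\alpha$ and $\beta$ are one-step elementarily equivalent via move (1), then Proposition~\ref{prop:iter-ele-equ} gives
\[
\int_{\gamma'\star\gamma\star\gamma^{-1}\star\gamma''}\omega_1\cdots\omega_r=\int_{\gamma'\star\gamma''}\omega_1\cdots\omega_r .
\]
If instead they are related via move (2), deleting a trivial arrow, then Corollary~\ref{trivial} gives the same equality. This is because a trivial arrow contributes $\langle\omega,r\rangle=0$, so inserting it is exactly the subdivision $\widehat{\alpha}^{\pm i}$. Induction on the length of the chain of one-step equivalences finishes this direction.

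For the converse, I would reduce to a statement about a single path map vanishing. The plan has three steps.

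First, every path map is elementarily equivalent to a reduced one. Repeatedly delete trivial arrows, and delete adjacent pairs $\beta_j\star\beta_{j+1}$ with $\beta_{j+1}=\beta_j^{-1}$. Each deletion strictly decreases length, so the process terminates. I would check that "neither $\beta_i$ nor $\beta_j\star\beta_{j+1}$ equivalent to trivial" amounts exactly to no trivial arrows and no immediate backtracks.

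Second, form the path map $\rho=\alpha\star\beta^{-1}$, which is a loop at $x$. Take any 1-forms $\omega_1,\dots,\omega_r$ and combine Proposition~\ref{prop:ii-conca-path} with Proposition~\ref{prop:ii-inv-path}:
\[
\int_{\rho}\omega_1\cdots\omega_r=\sum_{i=0}^r(-1)^{r-i}\int_\alpha\omega_1\cdots\omega_i\int_\beta\omega_r\cdots\omega_{i+1}.
\]
Now substitute $\int_\beta=\int_\alpha$ for every word; this is legitimate because the hypothesis holds for all words. The right side becomes
\[
\sum_{i=0}^r(-1)^{r-i}\int_\alpha\omega_1\cdots\omega_i\int_\alpha\omega_r\cdots\omega_{i+1}=\int_{\alpha\star\alpha^{-1}}\omega_1\cdots\omega_r ,
\]
read backwards via the same two propositions. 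By Proposition~\ref{prop:iter-ele-equ} this is $0$. Hence all iterated integrals of $\rho$ with $r\ge1$ vanish.

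Third, let $\rho'$ be a reduced representative of $\rho$. By the forward direction, all iterated integrals of $\rho'$ also vanish, so Lemma~\ref{lem:redd-path-nonzero} forces $\rho'$ to be trivial. Thus $\alpha\star\beta^{-1}\sim c_x$, where $c_x$ is the trivial path at $x$. Then
\[
\alpha\sim\alpha\star\beta^{-1}\star\beta\sim\beta ,
\]
where the first equivalence is move (1) applied to $\beta^{-1}\star\beta$, and the second uses $\alpha\star\beta^{-1}\sim c_x$. For the second step I need that elementary equivalence is compatible with concatenation. This holds because both moves are local, surrounded by arbitrary $\gamma',\gamma''$.

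The main obstacle is the third step's reliance on Lemma~\ref{lem:redd-path-nonzero}. Its proof uses the word $e^{\alpha_1}\cdots e^{\alpha_n}$ and claims only $(1,\dots,n)$ contributes. That claim can fail when the same arrow recurs non-adjacently, for example on a loop traversing a cycle twice. If so, I would instead choose the word carefully, or argue by leading-term comparison: order the contributing multi-indices and show that the reduced word's coefficient from the strictly increasing index sequence cannot be cancelled, using that non-increasing repeats carry weights $1/\tau$ with distinct combinatorial patterns.
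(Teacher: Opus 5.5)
Your argument is correct and is essentially the paper's proof. You form $\alpha\star\beta^{-1}$, use Propositions~\ref{prop:ii-conca-path} and \ref{prop:ii-inv-path} to identify its integrals with those of $\alpha\star\alpha^{-1}$ (which vanish), and conclude via Lemma~\ref{lem:redd-path-nonzero}; you are in fact more careful than the paper in first passing to a reduced representative of $\alpha\star\beta^{-1}$ and in checking that elementary equivalence is compatible with concatenation.

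Your worry about Lemma~\ref{lem:redd-path-nonzero} is a false alarm, so no fallback is needed. Any non-decreasing $(t_1,\dots,t_n)\neq(1,\dots,n)$ with entries in $\{1,\dots,n\}$ has some $t_i=t_{i+1}$. A nonzero term would then force the adjacent steps $\alpha_i,\alpha_{i+1}$ to share an underlying arrow, i.e.\ (there being no self-loops) to form a backtrack. Hence non-adjacent repetitions, such as a cycle traversed twice, never contribute.
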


\begin{proof}
Corollary~\ref{trivial} ensures invariance under insertion of trivial arrows, and Proposition~\ref{prop:iter-ele-equ} ensures invariance under elementary equivalence.

Conversely, Lemma~\ref{lem:redd-path-nonzero} shows that a reduced path map is elementarily equivalent to a trivial path map iff all iterated path integrals vanish.

Assume there exist distinct reduced path maps $\alpha$ and $\beta$ such that
\[
\int_{\alpha} \omega_1 \cdots \omega_r
=
\int_{\beta} \omega_1 \cdots \omega_r
\]
for all $1$-forms, but $\alpha \nsim \beta$.
Taking $\omega = df$ gives
\[
f(\alpha(n)) - f(\alpha(0)) = f(\beta(m)) - f(\beta(0)),
\]
for all $f \in \Acal(G)$.
Hence
\[
\alpha(0)=\beta(0), \qquad \alpha(n)=\beta(m).
\]

By Proposition~\ref{prop:ii-conca-path},
\[
\int_{\alpha \star \beta^{-1}} \omega_1 \cdots \omega_r
=
\int_{\alpha \star \alpha^{-1}} \omega_1 \cdots \omega_r
=0.
\]

Thus $\alpha \star \beta^{-1}$ is elementarily equivalent to a trivial path map, implying $\alpha \sim \beta$, a contradiction.
\end{proof}

\begin{corollary}
If
\begin{equation*}
    \int_{\alpha}\omega_1\cdots\omega_r=0
\end{equation*}
for all $1$-forms $\omega_1,\cdots,\omega_r$, then $\alpha$ is elementarily equivalent to the trivial path map.
\end{corollary}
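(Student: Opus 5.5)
The plan is to reduce every path map to a reduced representative and then apply Lemma~\ref{lem:redd-path-nonzero}.

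\textbf{Step 1: pass to a reduced representative.} Suppose $\int_{\alpha}\omega_1\cdots\omega_r=0$ for all $1$-forms and all $r\geq 1$, where $\alpha\colon I_n\to G$. I would reduce $\alpha$ by induction on $n$, applying the two elementary moves repeatedly.

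First, remove every trivial arrow using operation (2) of the definition. Next, whenever two consecutive arrows satisfy $\alpha_{i+1}=\alpha_i^{-1}$, i.e.\ a backtrack $x\to y\to x$ traversed through the same arrow, cancel them using operation (1) with $\gamma$ a single arrow.

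Each move strictly shortens the path, so the process terminates. It produces a path map $\beta$ that is elementarily equivalent to $\alpha$. In $\beta$ no arrow is trivial and no adjacent pair is a backtrack, so $\beta$ is reduced in the sense of the definition.

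\textbf{Step 2: the integrals of $\beta$ also vanish.} By Corollary~\ref{trivial} (trivial arrows) and Proposition~\ref{prop:iter-ele-equ} (backtracks), together with Proposition~\ref{prop:ii-conca-path} to localize each move inside a concatenation, every iterated integral is invariant under both moves. Hence
\[
\int_{\beta}\omega_1\cdots\omega_r=\int_{\alpha}\omega_1\cdots\omega_r=0
\]
for all $1$-forms and all $r\geq 1$.

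\textbf{Step 3: conclude.} By Lemma~\ref{lem:redd-path-nonzero}, $\beta$ is trivial, and therefore $\alpha$ is elementarily equivalent to a trivial path map.

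\textbf{Expected obstacle.} The delicate point is that Lemma~\ref{lem:redd-path-nonzero} is stated for reduced maps in a sense that forbids adjacent pairs $\beta_j\star\beta_{j+1}$ from being elementarily equivalent to a trivial path. I must check that after Step 1 no such pair survives.

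A pair of genuine arrows reduces to a trivial path only if the second arrow is the inverse of the first. To confirm this, I would use the test $\int e^{a}$ with $r=1$: it counts the signed occurrences of the arrow $a$, and each elementary move preserves that count. If the first and second arrows were distinct and not mutually inverse, these counts would be nonzero, so the pair could not be trivial.

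A second subtlety is the case of two distinct arrows $x\to y$ and $y\to x$, as in the double edge. Traversing one and then the other is not a backtrack, and it is correctly not cancelled: the form $e^{a}$ detects it, consistent with the lemma's proof, where $e^{\alpha_i}$ is attached to the specific arrow.

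Finally, the lemma's proof evaluates $\int_{\beta}e^{\beta_1}\cdots e^{\beta_n}$. If some arrow repeats non-adjacently, other index tuples could contribute. In that case I would use the leading term $(1,\dots,n)$ and argue that this tuple is the unique one attaining the maximal combined product structure, or choose generic scalar weights on the arrows to isolate it.
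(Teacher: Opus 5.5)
Your proof is correct and follows the paper's route. The paper obtains this corollary from the converse direction of Theorem~\ref{thm:paths-equ-int}, namely Lemma~\ref{lem:redd-path-nonzero} applied to a reduced representative, with invariance supplied by Corollary~\ref{trivial} and Proposition~\ref{prop:iter-ele-equ}; you make explicit the reduction step the paper leaves unstated.

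Your worry about non-adjacent repeated arrows is unfounded, so no generic-weights argument is needed. Any non-decreasing tuple other than $(1,\dots,n)$ has a tie $t_i=t_{i+1}=t$. The factor $\langle e^{\beta_i},\beta_t\rangle\langle e^{\beta_{i+1}},\beta_t\rangle$ is then nonzero only if the adjacent steps $\beta_i,\beta_{i+1}$ share an underlying arrow. In a path with no trivial arrows and no self-loops, that can only be a backtrack, which reducedness excludes.
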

\subsection{Bilinear Form and Adjointness}
\begin{definition}
Let $x,y$ be fixed vertices of $G$. The \emph{path iterated algebras} $\Pcal(G)$, $\Pcal_x(G)$ and $\Pcal_{x,y}(G)$ are the $\Kbb$-algebras generated by the indefinite iterated path integrals $\int \omega_1 \cdots \omega_r$, $\int_x \omega_1 \cdots \omega_r$ and $\int_x^y \omega_1 \cdots \omega_r$, respectively, $r \ge 1$, with multiplication given by the shuffle product
\begin{equation*}
  \int_x \omega_1 \cdots \omega_r \,
\int_x \omega_{r+1} \cdots \omega_{r+s}
=
\sum_{\sigma \in \Sh(r,s)}
\int_x \omega_{\sigma(1)} \cdots \omega_{\sigma(r+s)},
\end{equation*}
where $\Sh(r,s)$ denotes the set of $(r,s)$-shuffles. The unit is the constant function $1$.

The \emph{loop iterated algebras} $\Qcal(G)$ and $\Qcal_x(G)$ are the $\Kbb$-algebras generated by the indefinite loop iterated path integrals $\oint \omega_1 \cdots \omega_r$ and $\oint_x \omega_1 \cdots \omega_r$, respectively, for $r \ge 1$, with unit the constant function $1$.
\end{definition}

Let $\bar{P}(G)$ (resp. $\bar{P}_x(G)$, $\bar{P}_{x,y}(G)$, $\bar{L}(G)$, $\bar{L}_x(G)$) denote the sets of elementarily equivalent classes of $P(G)$ (resp. $P_x(G)$, $P_{x,y}(G)$, $L(G)$, $L_x(G)$). It follows that the dual maps (\ref{eq:dual-PG}) induce the following bilinear pairings
\begin{equation}\label{eq:bilinear-form}
	\begin{aligned}
    		\Pcal(G)\times \bar{P}(G)\to \Kbb,&\quad&\Pcal_x(G)\times \bar{P}_x(G)\to \Kbb,&\quad& \Pcal_{x,y}(G)\times \bar{P}_{x,y}(G)\to \Kbb,\\
            \Qcal(G)\times \bar{L}(G)\to \Kbb ,&\quad&\Qcal_{x}(G)\times \bar{L}_x(G)\to \Kbb.&\quad&
	\end{aligned}
\end{equation}
These pairings are denoted by $(-,-)$. For simplicity, we focus on $\Pcal_x(G)$, while the analogous statements hold for the other cases.

{By Lemma \ref{lem:redd-path-nonzero} and Proposition \ref{thm:paths-equ-int}, the kernel of the bilinear pairings in (\ref{eq:bilinear-form}) consists precisely of path maps that are elementarily equivalent to trivial path maps. Thus, the bilinear pairings in (\ref{eq:bilinear-form}) are well-defined and non-degenerate.}

Let {$G^*=(G_0,G_1,x)$, $H^*=(H_0,H_1,y)$ } be two based digraphs, and let $f\colon G^{*}\to H^{*}$ be a based digraph map. Let $\alpha\colon I_n\to G$ be a path map on $G$ with $\alpha(0)=x$, and let $\omega_1,\cdots,\omega_r$ be 1-forms on $H$. It follows that the composition $f\circ \alpha$ is elementarily trivial whenever $\alpha$ is elementarily trivial. Hence, $f$ induces a map
\begin{equation*}
	\begin{aligned}
	f_{*}\colon \bar{P}_x(G)\to \bar{P}_{f(x)}(H),\qquad
	[\alpha]\mapsto [f\circ \alpha].
	\end{aligned}
\end{equation*}

Let $f^{*}\colon \Omega^1(H)\to \Omega^1(G)$ be the pullback of 1-forms. The following proposition states that $f_{*}$ and $f^{*}$ are adjoint with respect to the pairing (\ref{eq:bilinear-form}).

\begin{proposition}\label{prop:dual-map}
Let $G^*=(G_0,G_1,x)$ and $H^*=(H_0,H_1,y)$ be based digraphs, and let $f\colon G\to H$ be a based digraph map. Let $\omega_1,\cdots,\omega_r$ be 1-forms on $H$. Then
\begin{equation}
	\int_{\alpha}f^{*}\omega_1\cdots f^{*}\omega_r=\int_{f_{*}\alpha}\omega_1\cdots\omega_r,
\end{equation}
equivalently,
$(f^{*}(\omega_1\cdots \omega_r),\alpha)=(\omega_1\cdots \omega_r,f_{*}\alpha)$,
where $f^{*}(\omega_1\cdots \omega_r):=f^{*}\omega_1\cdots f^{*}\omega_r$.
\end{proposition}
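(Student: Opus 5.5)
The identity reduces to a single-step comparison of the integrands, followed by summing over index sequences. Write $\beta=f\circ\alpha\colon I_n\to H$. Since both sides are defined over the same index set $1\le t_1\le\cdots\le t_r\le n$ with the same volume numbers $\tau(t_1,\dots,t_r)$, it suffices to prove, for every $1\le t\le n$ and every 1-form $\omega$ on $H$,
\begin{equation*}
\langle f^{*}\omega,\alpha_t\rangle=\langle \omega,\beta_t\rangle .
\end{equation*}
Multiplying these identities over $i=1,\dots,r$ (with $\omega=\omega_i$, $t=t_i$) and summing against $1/\tau(t_1,\dots,t_r)$ then gives the claim. The pairing form $(f^{*}(\omega_1\cdots\omega_r),\alpha)=(\omega_1\cdots\omega_r,f_{*}\alpha)$ is just a restatement, since the pairing is evaluation of the iterated integral. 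It is well defined on classes in $\bar P_x(G)$ by Theorem~\ref{thm:paths-equ-int}, because $f$ sends elementarily equivalent paths to elementarily equivalent ones.

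First I fix the pullback on $1$-forms. For $\omega=\sum_{b\in H_1}\omega(b)e^{b}$, I take $(f^{*}\omega)(a)=\omega(f(s(a))\to f(t(a)))$ when $f(s(a))\neq f(t(a))$, and $(f^{*}\omega)(a)=0$ when $a$ is sent to a trivial arrow. This is the pullback of Grigor'yan et al., given by $f^{*}\omega=\omega\circ f_{*}$ on $\Omega_1$, where $f_*$ sends degenerate $1$-paths to $0$. Since $\Omega^1(G)=\Kbb G_1^{\vee}$ with no relations in degree one, this gives a well-defined linear map.

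Next I run through the three cases for the $t$-th step $\alpha_t$.
\begin{itemize}
\item If $\alpha(t-1)=\alpha(t)$, then $\alpha_t$ is trivial, so is $\beta_t$, and both pairings vanish by definition.
\item If $\alpha_t=a\in G_1$ points forward, either $f(a)\in H_1$, in which case $\beta_t=f(a)$ and both sides equal $\omega(f(a))$; or $f(a)$ is a trivial arrow, in which case both sides are $0$.
\item If $\alpha_t=a^{-1}$ points backward, the pairing rule $\langle\omega,a^{-1}\rangle=-\omega(a)$ gives $-\omega(f(a))$ on both sides, using $\beta_t=f(a)^{-1}$. Again both sides are $0$ when $f(a)$ is trivial.
\end{itemize}
This establishes the one-step identity.

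The only delicate point is bookkeeping rather than mathematics. One must check that the associated arrow $\beta_t$ of the composite path map is exactly $f(\alpha_t)$ or its inverse, with orientation preserved. That holds because $f$ is a digraph map: an arrow $\alpha(t-1)\to\alpha(t)$ of $I_n$'s image goes to an arrow $f\alpha(t-1)\to f\alpha(t)$ or to a diagonal pair, never to a reversed arrow. One must also check that trivial arrows contribute zero consistently on both sides, which the convention $\langle\omega,r_z\rangle=0$ guarantees. No combinatorics of $\tau$ is needed, because the index sets coincide term by term.
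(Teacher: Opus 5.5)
Your proposal is correct and follows the paper's proof, which consists of the single step identity $f^{*}\omega_k(\alpha_t)=\omega_k((f_{*}\alpha)_t)$ summed over index sequences against the common weights $1/\tau(t_1,\dots,t_r)$. Your case analysis (trivial steps, forward and backward steps, and arrows collapsed by $f$) spells out why that identity holds, and your remark that the orientation of $\beta_t$ must be read off from the arrow of $I_n$ rather than from the vertex pair alone is a useful precision the paper leaves implicit.
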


\begin{proof}
This follows from the identity
$f^{*}\omega_k(\alpha_t)=\omega_k(f_{*}\alpha_t)=\omega_k((f_{*}\alpha)_t)$.
\end{proof}

From the above proposition, the pullback $f^{*}$ extends to the iterated path algebra and iterated loop algebra as follows:
\begin{equation*}
	\begin{aligned}
f^{*}\colon \Pcal_{y}(H)&\to \Pcal_x(G),&\qquad&f^{*}\colon \Qcal_{y}(H)&\to \Qcal_{x}(G)\\
\int_{y}\omega_1\cdots\omega_r&\mapsto \int_x f^{*}(\omega_1\cdots\omega_r),&\qquad&\oint_{y}\omega_1\cdots\omega_r&\mapsto \oint_x f^{*}(\omega_1\cdots\omega_r).
	\end{aligned}
\end{equation*}

The following corollary is a direct consequence of Proposition \ref{prop:dual-map}.

\begin{corollary}\label{cor:covariant}
Let $G^*=(G_0,G_1,x)$, $H^*=(H_0,H_1,y)$ and $K^*=(K_0,K_1,z)$ be based digraphs, and let $f\colon G\to H$ and $g\colon H\to K$ be based digraph maps. Then
\begin{equation*}
  f^{\ast}g^{\ast}=(g\circ f)^{\ast}\colon \Pcal_z(K)\to \Pcal_x(G)\quad\text{resp. }  \Qcal_z(K)\to \Qcal_x(G).
\end{equation*}
\end{corollary}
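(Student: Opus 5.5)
The plan is to deduce Corollary~\ref{cor:covariant} from Proposition~\ref{prop:dual-map}, applied once to each of $f$, $g$ and $g\circ f$. Everything reduces to an identity of 1-forms, because $f^{*}$ on $\Pcal$ and on $\Qcal$ is defined on generators by pulling back each 1-form separately: $f^{*}\bigl(\int_y\omega_1\cdots\omega_r\bigr)=\int_x f^{*}\omega_1\cdots f^{*}\omega_r$.

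\textbf{Step 1: composition on 1-forms.} I first check that $f^{*}g^{*}\omega=(g\circ f)^{*}\omega$ for every $\omega\in\Omega^1(K)$. It suffices to evaluate both sides on an arrow $a=u\to v$ of $G$. The pullback is characterized by $f^{*}\omega(a)=\langle\omega,f_{*}a\rangle$, where $f_{*}a$ is the image arrow, or the trivial arrow $r_{f(u)}$ if $f(u)=f(v)$; in the trivial case the pairing is $0$.

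\emph{Case $f(u)\neq f(v)$.} Then $f_{*}a$ is an arrow of $H$, so
\[
f^{*}g^{*}\omega(a)=g^{*}\omega(f_{*}a)=\langle\omega,g_{*}f_{*}a\rangle=\langle\omega,(g\circ f)_{*}a\rangle .
\]
This is true also when $g$ collapses $f_{*}a$, since both sides are then $0$.

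\emph{Case $f(u)=f(v)$.} Then both sides vanish, because $(g\circ f)_{*}a$ is also a trivial arrow.

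\textbf{Step 2: extension to iterated integrals.} Applying Step 1 entrywise gives $f^{*}g^{*}(\omega_1\cdots\omega_r)=(g\circ f)^{*}(\omega_1\cdots\omega_r)$ on generators. To see that this holds as maps of algebras, and not merely on chosen representatives, I compare values under the pairing. For every path map $\alpha$ from $x$, Proposition~\ref{prop:dual-map} applied twice gives
\[
\int_\alpha f^{*}g^{*}\omega_1\cdots f^{*}g^{*}\omega_r=\int_{f\circ\alpha}g^{*}\omega_1\cdots g^{*}\omega_r=\int_{g\circ f\circ\alpha}\omega_1\cdots\omega_r .
\]
The right-hand side is exactly the value of $(g\circ f)^{*}\int_z\omega_1\cdots\omega_r$ on $\alpha$. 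Elements of $\Pcal_x(G)$ are functionals on $P_x(G)$, and the pairing is non-degenerate, so the two elements agree.

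Both maps respect the shuffle product, since pullback commutes with permuting the forms. They also preserve the unit. Hence they agree on the whole algebra generated by the iterated integrals. The loop case $\Qcal$ is identical, because based maps send loops at $x$ to loops at $z$.

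The only delicate point is the trivial-arrow case in Step 1. I would handle it directly from the convention $\langle\omega,b\rangle=0$ for trivial arrows $b$.
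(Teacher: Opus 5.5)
Your proposal is correct and follows the paper's route. The paper simply records the corollary as a direct consequence of Proposition~\ref{prop:dual-map}, which is your Step 2: apply the adjointness identity for $f$, $g$ and $g\circ f$ and compare values on every path map from $x$. Your Step 1 check of $f^{*}g^{*}=(g\circ f)^{*}$ on 1-forms, including the trivial-arrow case, is a harmless extra that makes the identity on generators explicit.
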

\section{Algebras on Digraphs}
In this section, we introduce several algebras arising from the iterated path integrals of a digraph defined in previous section. In particular, we introduce $\pi^1(G)$, which is a homotopy invariant. It will be shown that these algebras carry Hopf algebra structures with antipodes.

To begin, we briefly recall some notions of Hopf algebras from \cite{dascalescu2000hopf}. A $\Kbb$-vector space $H$ is called a \textit{coalgebra} if it is equipped with two $\Kbb$-linear maps $\Delta_H\colon H\to H\otimes H$ and $\epsilon_H\colon H\to \Kbb$ satisfying:
\begin{enumerate}
	\item $\Delta_H$ is coassociative, i.e. $(\Delta_H\otimes 1_H)\Delta_H=(1_H\otimes\Delta_H)\Delta_H$,
	\item $\Delta_H$ is counitary, i.e. $(\epsilon_H\otimes 1_H)\Delta_H=(\epsilon_H\otimes\epsilon_H){\Delta_H} =1_H$.
\end{enumerate}
A $\Kbb$-linear map
$f\colon (H,\Delta_H,\epsilon_H)\to (H',\Delta_{H'},\epsilon_{H'})$
is a morphism of coalgebras if it satisfies $\Delta_{H'} f=(f\otimes f)\Delta_H,$ $\epsilon_{H'}f=\epsilon_H.$

A $\Kbb$-vector space $H$ is called a bialgebra if it is equipped with an algebra structure $(H,\mu_H\colon H\otimes H\to H,\eta_H\colon \Kbb\to H)$ and a coalgebra structure $(H,\Delta_H,\epsilon_H)$ such that $\Delta_H$ and $\epsilon_H$ are morphisms of algebras. It is well known in \cite[Chapter 4]{dascalescu2000hopf} that $\mu_H$ and $\eta_H$ are also morphisms of coalgebras. The map $\mu_H$ is called the multiplication of $H$, while $\Delta_H$ is called the comultiplication. The unit $\eta_H\colon \Kbb\to H$ and the counit $\epsilon_H$ are referred to as the unit and counit, respectively.

An antipode (or coinverse) of a bialgebra $H$ is a linear map $j_H\colon H\to H$ satisfying
\begin{equation*}
\mu_H(1_H\otimes j_H)\Delta_H=\eta_H\epsilon_H=\mu_H(j_H\otimes 1_H)\Delta_H.
\end{equation*}

A bialgebra admitting an antipode is called a Hopf algebra. If the antipode satisfies $j_H^2=\Id_H$, then the Hopf algebra is called involutive. Let $(H,\mu_H,\eta_H,\Delta_H,\epsilon_H,j_H)$ and $(H',\mu_{H'},\eta_{H'},\Delta_{H'},\epsilon_{H'},j_{H'})$ be Hopf algebras. A $\Kbb$-linear map $f\colon H\to H'$ is a morphism of Hopf algebras if it is both an algebra morphism and a coalgebra morphism, i.e., it preserves $(\mu,\eta)$ and $(\Delta,\epsilon)$. A standard result asserts that any bialgebra morphism satisfies
\[
j_{H'}\circ f=f\circ j_H,
\]
hence it preserves antipodes.

Let $H$ be a Hopf algebra. A subspace $H'\subset H$ is called a \emph{Hopf subalgebra} if it is simultaneously a subalgebra, a subcoalgebra, and satisfies $j_H(H')\subset H'$. In this case, $H'$ inherits a Hopf algebra structure from $H$.

\subsection{{Shuffle and Iterated Path Integral Algebras}}
In this subsection, we show that the $\Kbb$-algebra $\Qcal_x(G)$ admits a Hopf algebra structure with antipode. To verify this, let us begin with the shuffle algebra $\Sh(G)$.

Let $T(G)= \bigoplus_{n\geq 0} T^n(G)$ be the tensor algebra of $G$ over $\Kbb$, where $T^0(G)=\Omega^0(G)=\Acal(G)$ is the space of $\Kbb$-valued functions on $G$, and $T^1(G)=\Omega^1(G)$ is the space of $1$-forms on $G$. The $k$-th graded component $T^k(G)$ is a $\Kbb$-vector space with basis elements of the form
 $e^{a_1\cdots a_k}:=e^{a_1}\otimes\cdots \otimes e^{a_k},$
where $a_1,\cdots,a_k\in G_1$.

The graded concatenation product is defined by
\begin{equation}
	e^{a_1\cdots a_k}\cdot e^{b_1\cdots b_l}
	=
	e^{a_1\cdots a_k b_1\cdots b_l}.
\end{equation}

\begin{remark}
Note that the tensor product defined above is different from the exterior product of $p$-forms on a digraph introduced in \cite{grigor2015cohomology, grigor2018path}, which is given by
\begin{equation}
	e^{a_1\cdots a_k}\wedge e^{b_1\cdots b_l}
	=
	\begin{cases}
	0, & t(a_k)\neq s(b_1),\\[4pt]
	e^{a_1\cdots a_k b_1\cdots b_l}, & t(a_k)= s(b_1),
	\end{cases}
\end{equation}
where $a_1,\cdots,a_k,b_1,\cdots,b_l \in G_1$.
\end{remark}

\begin{definition}
The shuffle algebra $\Sh(G)$ of a digraph $G$ is the algebra whose underlying graded vector space is $T(G)$, and whose multiplication is defined by
\begin{equation}
e^{a_1\cdots a_r}\circ e^{a_{r+1}\cdots a_{r+s}}
=
\sum_{\sigma \in \Sh(r,s)}
e^{a_{\sigma(1)}\cdots a_{\sigma(r+s)}},
\end{equation}
where $\Sh(r,s)$ denotes the set of $(r,s)$-shuffles.
\end{definition}

Define $\epsilon_{\Sh(G)}\in \Hom(\Sh(G),\Kbb)$ by $\epsilon_{\Sh(G)}(1)=1$ and $\epsilon_{\Sh(G)}(\omega_1\cdots\omega_r)=0$ for $r\geq 1$. The comultiplication
\begin{equation*}
\Delta_{\Sh(G)}\colon \Sh(G)\to\Sh(G)\otimes\Sh(G)
\end{equation*}
is defined by $\Delta_{\Sh(G)}(1)=1$ and, for $r\geq 1$,
\begin{equation*}
\Delta_{\Sh(G)}(\omega_1\cdots\omega_r)
=
\sum_{i=0}^r (\omega_1\cdots\omega_i)\otimes(\omega_{i+1}\cdots\omega_r).
\end{equation*}
And the linear map $j_{\Sh(G)}$ is defined by
\begin{equation*}
j_{\Sh(G)}(\omega_1\cdots \omega_r)=(-1)^r\,\omega_r\cdots \omega_1.
\end{equation*} By \cite{dascalescu2000hopf,grinberg2014hopf}, the shuffle algebra $\Sh(G)$ is a Hopf $\Kbb$-algebra with counit $\epsilon_{\Sh(G)}$, comultiplication $\Delta_{\Sh(G)}$, and antipode $j_{\Sh(G)}$.

\begin{remark}
The tensor algebra $T(G)$ is also a Hopf algebra whose multiplication $\mu_{T(G)}$ and unit $\eta_{T(G)}$ are naturally defined. The comultiplication $\Delta_{T(G)}$ is given by $\Delta_{T(G)}(\omega_1\otimes\cdots\omega_m)=\sum_{p=1}^m\sum_{\sigma}(\omega_{\sigma (1)}\otimes\cdots\omega_{\sigma (p)}\boxtimes \omega_{\sigma (p+1)}\otimes\cdots\omega_{\sigma (m)}),$ where $\sigma$ {run} over all $(p,m-p)$-shuffles, $\otimes$ is the tensor product in $T(G)$ and $\boxtimes$ denote the tensor product between $T(G)$s. The counit $\epsilon_{T(G)}$ is given by $\epsilon_{T(G)}(1)=1$ and $\epsilon_{T(G)}(x)=0$ for $x \notin T^0(G)=\Kbb$. The shuffle algebra is the dual Hopf algebra of tensor algebra.
\end{remark}
Recall that the group algebra $\Kbb H$ is a Hopf algebra for any group $H$. In particular, $\Kbb \Bar{L}_x(G)=\Bar{L}_x(G)$  is a Hopf algebra for any digraph $G$, where the algebra structure
$\mu_{\Bar{L}_x(G)}\colon \Bar{L}_x(G)\otimes \Bar{L}_x(G)\to \Bar{L}_x(G)$ is given by loop concatenation, and the unit
$\eta_{\Bar{L}_x(G)}\colon \Kbb\to \Bar{L}_x(G)$ assigns $c\in\Kbb$ to $c\,e_x$, the constant loop at $x$. The coalgebra structure on $\Bar{L}_x(G)$ is given by
\[
\Delta_{\Bar{L}_x(G)}(g)=g\otimes g,\qquad
\epsilon_{\Bar{L}_x(G)}(g)=1_{\Kbb},\qquad
j_{\Bar{L}_x(G)}(g)=g^{-1}.
\]  

Fix a vertex $x\in G_0$. $\Pcal_x(G)$ (resp. $\Qcal_x(G)$) is the algebra generated by indefinite iterated path integrals $\int_x \omega_1\cdots\omega_r$ (resp. $\oint_x \omega_1\cdots\omega_r$), $r\geq 1$, together with the constant function 1 with the shuffle multiplication. There are surjective maps $\Sh(G)\to \Pcal_x(G)$ and $\Sh(G)\to \Qcal_x(G)$ given by, respectively,
\begin{equation}\label{eq:shP}
\begin{aligned}
\int_x\colon \Sh(G)\to\Pcal_x(G),&\quad \omega_1\cdots\omega_r \mapsto \int_x \omega_1\cdots\omega_r,\\
\oint_x\colon \Sh(G)\to \Qcal_x(G),&\quad \omega_1\cdots\omega_r \mapsto \oint_x \omega_1\cdots\omega_r.
\end{aligned}
\end{equation}
Define the homomorphisms of $\Kbb$-modules $$\epsilon_{\Qcal_x(G)}\colon \Qcal_x(G)\to \Kbb,\quad \oint_x u\mapsto \oint_{r_x}u,$$
where $r_x$ is the trivial arrow at $x$, $$\Delta_{\Qcal_x(G)}\colon \Qcal_x(G)\to \Qcal_x(G)\otimes \Qcal_x(G),\quad \oint_{x}\omega_1\cdots\omega_r\mapsto\sum_{0\leq i\leq r}\oint_x\omega_1\cdots\omega_i\otimes\oint_x\omega_{i+1}\cdots\omega_r$$ together with $$j_{\Qcal_x(G)}\colon \Qcal_x(G)\to \Qcal_x(G),\quad \oint_{x}\omega_1\cdots\omega_r\mapsto (-1)^r\oint_{x}\omega_r\cdots\omega_1.$$

\begin{proposition}\label{prop:Q-hopf}Under above homomorphisms,
$\Qcal_x(G)$ is a Hopf algebra with counit $\epsilon_{\Qcal_x(G)}$, comultiplication $\Delta_{\Qcal_x(G)}$, and antipode $j_{\Qcal_x(G)}$.
\end{proposition}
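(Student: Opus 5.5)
The plan is to transport the Hopf structure of $\Sh(G)$ through the surjection $\oint_x\colon \Sh(G)\to\Qcal_x(G)$ of \eqref{eq:shP}. Concretely, I will show that the kernel $K=\ker\oint_x$ is a Hopf ideal: an ideal for the shuffle product, a coideal ($\Delta_{\Sh(G)}(K)\subset K\otimes\Sh(G)+\Sh(G)\otimes K$ and $\epsilon_{\Sh(G)}(K)=0$), and stable under $j_{\Sh(G)}$. Then $\Qcal_x(G)\cong\Sh(G)/K$ inherits a quotient Hopf structure. By construction the induced maps are exactly $\epsilon_{\Qcal_x(G)}$, $\Delta_{\Qcal_x(G)}$ and $j_{\Qcal_x(G)}$, and this also settles their well-definedness. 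This is the real content of the statement, since the formulas are written on representatives $\omega_1\cdots\omega_r$.

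\textbf{Step 1 (ideal, unit, counit).} By Proposition~\ref{prop:ii-mult}, $\oint_\alpha(u\circ v)=\oint_\alpha u\cdot\oint_\alpha v$ for every loop $\alpha$ at $x$, so $\oint_x$ is an algebra map and $K$ is an ideal. For the counit, the trivial arrow $r_x$ satisfies $\langle\omega,r_x\rangle=0$, so $\oint_{r_x}u=\epsilon_{\Sh(G)}(u)$. Hence $\epsilon_{\Qcal_x(G)}\circ\oint_x=\epsilon_{\Sh(G)}$, and $\epsilon$ vanishes on $K$.

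\textbf{Step 2 (coideal).} This is the main obstacle. I use the non-degenerate pairing $\Qcal_x(G)\times\bar L_x(G)\to\Kbb$ from \eqref{eq:bilinear-form}. Proposition~\ref{prop:ii-conca-path} gives, for loops $\alpha,\beta$ at $x$,
\[
\oint_{\alpha\star\beta}u=\sum (\oint_\alpha u')(\oint_\beta u''),\qquad \Delta_{\Sh(G)}u=\sum u'\otimes u''.
\]
Now let $u\in K$. The element $(\oint_x\otimes\oint_x)\Delta_{\Sh(G)}u\in\Qcal_x(G)\otimes\Qcal_x(G)$ pairs to zero against every $\alpha\otimes\beta$, because its value there is $\oint_{\alpha\star\beta}u=0$. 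The elements of $\Qcal_x(G)$ are functionals on $L_x(G)$. A finite sum $\sum_k f_k\otimes g_k$ with $f_k,g_k$ linearly independent functionals that vanishes on all pairs $(\alpha,\beta)$ must be zero; I will verify this by choosing loops dual to a basis. So the element is zero, which means $\Delta_{\Sh(G)}K\subset\ker(\oint_x\otimes\oint_x)=K\otimes\Sh(G)+\Sh(G)\otimes K$. Consequently $\Delta_{\Qcal_x(G)}$ is well defined, and it corresponds under the pairing to concatenation of loops.

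\textbf{Step 3 (antipode and axioms).} By Proposition~\ref{prop:ii-inv-path}, $\oint_{\alpha^{-1}}u=\oint_\alpha j_{\Sh(G)}u$. Since $\alpha\mapsto\alpha^{-1}$ permutes the loops at $x$, $j_{\Sh(G)}$ preserves $K$. The bialgebra axioms, coassociativity, counitality and the antipode identities then hold in $\Qcal_x(G)$ because they hold in $\Sh(G)$ and $\oint_x$ is surjective. As a cross-check via the pairing, the antipode identity evaluated on $\alpha$ reads $\oint_{\alpha\star\alpha^{-1}}u=\epsilon(u)$, which is Proposition~\ref{prop:iter-ele-equ}. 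Involutivity follows from $j_{\Sh(G)}^2=\Id$.
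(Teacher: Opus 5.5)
Your argument is correct, but it runs in the opposite direction from the paper's. The paper invokes Chen's Lemma~3.5 with $H=\Qcal_x(G)$ and $H'=\Kbb\bar{L}_x(G)$, the group algebra of elementary-equivalence classes of loops. It then pulls the Hopf axioms back from the group-algebra side, checking the compatibility identities with the pairing via Propositions~\ref{prop:ii-mult}, \ref{prop:ii-conca-path} and \ref{prop:ii-inv-path}. You instead push the known Hopf structure of $\Sh(G)$ forward, by showing that $\ker\oint_x$ is a Hopf ideal.

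Your route buys several things. First, it actually proves that $\Delta_{\Qcal_x(G)}$ and $j_{\Qcal_x(G)}$ are well defined. The paper writes them only on representatives, and the fact you establish in Step~2 (left non-degeneracy of the induced pairing on $\Qcal_x(G)\otimes\Qcal_x(G)$) is simply assumed as a hypothesis of the cited lemma. Second, Step~2 really uses only that elements of $\Qcal_x(G)$ are functions on the set of loops at $x$. So, although you cite the pairing with $\bar{L}_x(G)$, you need neither invariance of iterated integrals under elementary equivalence nor the group structure on $\bar{L}_x(G)$. Third, it exhibits $\oint_x$ as a Hopf surjection, which gives involutivity for free. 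Your ``cross-check'' can even be reversed. The antipode identity in $\Sh(G)$, combined with Propositions~\ref{prop:ii-conca-path}, \ref{prop:ii-inv-path} and \ref{prop:ii-mult}, yields $\int_{\alpha\star\alpha^{-1}}u=\epsilon_{\Sh(G)}(u)$ for every path map $\alpha$. That is Proposition~\ref{prop:iter-ele-equ}, obtained without the paper's combinatorial argument.

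What the paper's route buys is the explicit duality with $\Kbb\bar{L}_x(G)$. This duality is reused in Theorem~\ref{thm:pi1-hopf} to realise $\pi^1_x(G)$ as $N^{\perp}$ through Lemma~\ref{subal}. There $\pi^1_x(G)$ is a sub-object rather than a quotient, so a Hopf-ideal argument alone would not suffice.
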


To prove Proposition~\ref{prop:Q-hopf}, we first recall the following lemma.
\begin{lemma}[Lemma 3.5, \cite{chen1971algebras}]
Let $H'$ be a Hopf algebra, and $(H, \mu_H, \eta_H)$ be an algebra equipped with linear maps
\[
\epsilon_H \colon  H \to \Kbb, \quad
\Delta_H \colon  H \to H \otimes H, \quad
j_H \colon  H \to H.
\]
Suppose there exists a left non-degenerate pairing
$\langle -, - \rangle \colon  H \times H' \to \Kbb
,$
which induces a left non-degenerate pairing
\[
H \otimes H \times H' \otimes H' \to \Kbb, \quad
\langle h_1 \otimes h_2, h'_1 \otimes h'_2 \rangle \mapsto \langle h_1, h'_1 \rangle \, \langle h_2, h'_2 \rangle,
\]
such that for all $h, h_1, h_2 \in H$, $h'_1, h'_2, h' \in H'$, and $c \in \Kbb$,
\begin{align*}
\langle \mu_H(h_1, h_2), h' \rangle = \langle h_1 \otimes h_2, \Delta_{H'} h' \rangle,\quad
\langle h, \mu_{H'}(h'_1, h'_2) \rangle = \langle \Delta_H h, h'_1 \otimes h'_2 \rangle, \\
\langle \eta_H c, h' \rangle = c \, \epsilon_{H'}(h'),\quad
\langle h, \eta_{H'} c \rangle = c \, \epsilon_H(h), \quad  \langle j_H h, h' \rangle = \langle h, j_{H'} h' \rangle.
\end{align*}
Then $H$ is a Hopf algebra, and $j_H$ is an antipode of $H$.
\end{lemma}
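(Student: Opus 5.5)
The plan is to prove this lemma (Chen's Lemma 3.5) by transporting each Hopf axiom from $H'$ to $H$ through the pairing. Since $H$ pairs left non-degenerately with $H'$, and $H\otimes H$ pairs left non-degenerately with $H'\otimes H'$, an identity between elements of $H$ (or of $H\otimes H$, or $H^{\otimes 3}$) can be checked by pairing both sides against arbitrary elements of $H'$ (resp. its tensor powers). For $H^{\otimes 3}$ I would use the induced pairing with $H'^{\otimes 3}$; its left non-degeneracy follows from the two-fold case by the same tensor argument, and I will state this as a preliminary step.

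\textbf{Coalgebra axioms.} For coassociativity I pair $(\Delta_H\otimes 1)\Delta_H h$ with $h'_1\otimes h'_2\otimes h'_3$. Applying the second adjunction twice gives $\langle h,\mu_{H'}(\mu_{H'}(h'_1,h'_2),h'_3)\rangle$. Similarly, $(1\otimes\Delta_H)\Delta_H h$ gives $\langle h,\mu_{H'}(h'_1,\mu_{H'}(h'_2,h'_3))\rangle$. These agree by associativity of $\mu_{H'}$. For the counit axiom I pair $(\epsilon_H\otimes 1)\Delta_H h$ with $h'$ and rewrite $\epsilon_H$ via $\langle -,\eta_{H'}1\rangle$. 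This gives $\langle h,\mu_{H'}(\eta_{H'}1,h')\rangle=\langle h,h'\rangle$, so the two sides agree and non-degeneracy yields $(\epsilon_H\otimes 1)\Delta_H=\mathrm{id}$. The other side is symmetric.

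\textbf{Bialgebra compatibility.} Here I must show that $\Delta_H$ and $\epsilon_H$ are algebra maps. Pairing $\Delta_H\mu_H(h_1,h_2)$ with $h'_1\otimes h'_2$ gives $\langle h_1\otimes h_2,\Delta_{H'}\mu_{H'}(h'_1,h'_2)\rangle$. Using that $H'$ is a bialgebra, this equals
\[
\langle h_1\otimes h_2,(\mu_{H'}\otimes\mu_{H'})(1\otimes\tau\otimes 1)(\Delta_{H'}h'_1\otimes\Delta_{H'}h'_2)\rangle,
\]
which unwinds back to $\langle \mu_{H\otimes H}(\Delta_H h_1,\Delta_H h_2),h'_1\otimes h'_2\rangle$. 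Unit compatibility, $\Delta_H\eta_H=\eta_H\otimes\eta_H$ and $\epsilon_H\mu_H=\epsilon_H\otimes\epsilon_H$, follows the same way from $\epsilon_{H'}$ and $\Delta_{H'}$ respecting units.

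\textbf{Antipode.} I pair $\mu_H(1\otimes j_H)\Delta_H h$ with $h'$. The first adjunction turns this into $\langle (1\otimes j_H)\Delta_H h,\Delta_{H'}h'\rangle$. The transpose identity for $j$, applied in the second tensor factor, gives $\langle \Delta_H h,(1\otimes j_{H'})\Delta_{H'}h'\rangle$. The second adjunction then yields $\langle h,\mu_{H'}(1\otimes j_{H'})\Delta_{H'}h'\rangle=\langle h,\eta_{H'}\epsilon_{H'}(h')\rangle$, which equals $\epsilon_{H'}(h')\epsilon_H(h)$. The left side $\langle\eta_H\epsilon_H(h),h'\rangle$ gives the same value, and non-degeneracy gives the antipode identity. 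The other side is symmetric.

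\textbf{Main obstacle.} The delicate point is bookkeeping rather than ideas. Non-degeneracy is only assumed on the left, so every identity must be arranged with the unknown element sitting in the $H$-slot. One must also justify that the adjunctions extend to the induced pairings on tensor powers, for instance $\langle \Delta_H\otimes 1\,(x),y\rangle$ with $x\in H\otimes H$. This requires applying the defining relations factorwise, by linearity on simple tensors. I would therefore first record this extension as a small sublemma and then run the verifications above.
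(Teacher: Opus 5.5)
Your proposal is correct. It is the standard transport-of-structure argument, transposing each bialgebra and antipode axiom of $H'$ through the left non-degenerate pairing, which is what Chen's Lemma 3.5 rests on; the paper gives no proof of its own and only cites Chen.

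Only a cosmetic fix is needed:
\begin{itemize}
\item $\Delta_H\eta_H=\eta_H\otimes\eta_H$ is the transpose of the multiplicativity of $\epsilon_{H'}$, not of a unit condition.
\item You should also record $\epsilon_H(1_H)=\epsilon_{H'}(1_{H'})=1$.
\item Over a field, the non-degeneracy on $H^{\otimes 3}\times H'^{\otimes 3}$ is automatic, because $H\hookrightarrow (H')^{*}$ and $V^{*}\otimes W^{*}\to (V\otimes W)^{*}$ are injective. In fact the two-fold hypothesis is automatic for the same reason.
\end{itemize}
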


\begin{proof}[Proof of Proposition \ref{prop:Q-hopf}] As we know, the bilinear form $(-,-)\colon \Qcal_x(G)\times \Bar{L}_x(G)\to \Kbb$ is {non-degenerate}. It suffices to check that the morphisms $\epsilon_{\Qcal_x(G)}$, $\Delta_{\Qcal_x(G)}$, and $j_{\Qcal_x(G)}$ satisfy the required compatibility conditions with respect to the pairing
\begin{equation*}
    \Qcal_x(G)\times \Bar{L}(G)\to \Kbb .
\end{equation*}
These identities follow directly from the definitions, Proposition~\ref{prop:ii-conca-path}, and Proposition~\ref{prop:ii-inv-path}.
\end{proof}

\begin{theorem}\label{thm:PQ-func}
There exist two well-defined {contra}variant functors
\begin{equation*}
    \begin{aligned}
        \Pcal &\colon  \underline{\BDig} \to \underline{\mathcal{A}}, &\quad G^*=(G_0,G_1,x) &\mapsto \Pcal_x(G),\\
        \Qcal &\colon  \underline{\BDig} \to \underline{\mathcal{H}}, &\quad G^*=(G_0,G_1,x) &\mapsto \Qcal_x(G),
    \end{aligned}
\end{equation*}
where $\underline{\BDig}$ denotes the category of based digraphs, and $\underline{\mathcal{A}}$ and $\underline{\mathcal{H}}$ denote the categories of $\mathbb{K}$-algebras and Hopf $\mathbb{K}$-algebras, respectively.
\end{theorem}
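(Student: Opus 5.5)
To prove Theorem~\ref{thm:PQ-func}, we define the functors on objects by $G^*\mapsto\Pcal_x(G)$ and $G^*\mapsto \Qcal_x(G)$. These are objects of $\underline{\mathcal{A}}$ and $\underline{\mathcal{H}}$: the first by the definition of the path iterated algebra, the second by Proposition~\ref{prop:Q-hopf}. On a based digraph map $f\colon G^*\to H^*$ we set $\Pcal(f)=f^*\colon \Pcal_y(H)\to \Pcal_x(G)$ and $\Qcal(f)=f^*\colon\Qcal_y(H)\to\Qcal_x(G)$, the extended pullbacks defined after Proposition~\ref{prop:dual-map}. Contravariant functoriality then consists of $(g\circ f)^*=f^*g^*$, which is Corollary~\ref{cor:covariant}, and $\Id^*=\Id$, which is immediate since $\Id^*\omega=\omega$. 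The substance of the proof is therefore that $f^*$ is well defined and is a morphism in the target category.

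The first and main step is well-definedness. An element of $\Pcal_y(H)$ is a functional on $P_y(H)$, and different tensors in $\Sh(H)$ may give the same functional. We must show that if $\sum_k c_k\int_y u_k=0$ as a functional, with $u_k\in \Sh(H)$, then $\sum_k c_k\int_x f^*u_k=0$. To do this we evaluate on an arbitrary path map $\alpha$ starting at $x$. Proposition~\ref{prop:dual-map} gives $(\int_x f^*u_k,\alpha)=(\int_y u_k, f_*\alpha)$, and $f_*\alpha=f\circ\alpha$ is a path map starting at $f(x)=y$, so the sum vanishes. For $\Qcal$ the same argument applies, using that $f\circ\alpha$ is a loop at $y$ whenever $\alpha$ is a loop at $x$. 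This step is where the basedness of $f$ enters: it ensures that $f_*$ carries $P_x(G)$ into $P_y(H)$ and $L_x(G)$ into $L_y(H)$.

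Next we check the algebra structure. The map $f^*$ is linear. It preserves the shuffle product, because on $\Sh$ the pullback acts letterwise: $f^*(\omega_{\sigma(1)}\cdots\omega_{\sigma(r+s)})=f^*\omega_{\sigma(1)}\cdots f^*\omega_{\sigma(r+s)}$, so $f^*(u\circ v)=f^*u\circ f^*v$. Alternatively, one can pair with $\alpha$ and apply Proposition~\ref{prop:ii-mult} together with Proposition~\ref{prop:dual-map}. The map also sends $1$ to $1$. Hence $\Pcal$ lands in $\underline{\mathcal A}$.

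Finally, for $\Qcal$ we verify compatibility with the coalgebra structure and the antipode. The formulas for $\Delta$ and $j$ are given letterwise on representatives, and pullback commutes with deconcatenation and with reversal together with the sign $(-1)^r$. It follows that $\Delta_{\Qcal_x(G)}f^*=(f^*\otimes f^*)\Delta_{\Qcal_y(H)}$ and $j f^*=f^* j$. For the counit, $\epsilon(f^*\oint_y u)=\oint_{r_x}f^*u=\oint_{f\circ r_x}u=\oint_{r_y}u$, since $f\circ r_x=r_y$. To avoid depending on representatives, we can check each identity by pairing against $\bar L_x(G)$ (respectively $\bar L_x(G)\otimes\bar L_x(G)$), using non-degeneracy together with Propositions~\ref{prop:ii-conca-path} and~\ref{prop:dual-map}, and the fact that $f_*$ preserves concatenation of loops. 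A bialgebra morphism automatically preserves antipodes, so $f^*$ is a Hopf algebra morphism and $\Qcal$ lands in $\underline{\mathcal H}$.
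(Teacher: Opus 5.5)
Your proposal is correct and follows the same route as the paper: you get functoriality from Proposition~\ref{prop:dual-map} and Corollary~\ref{cor:covariant}, and the Hopf structure on $\Qcal_x(G)$ from Proposition~\ref{prop:Q-hopf}. You also spell out what the paper leaves implicit, namely that $f^*$ is well defined on the quotients of $\Sh(H)$ (via adjointness and the basedness of $f$) and that it is a bialgebra, hence Hopf, morphism; this completes the paper's argument rather than replacing it.
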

\begin{proof}
    The functoriality comes from the Proposition \ref{prop:dual-map} and Corollary \ref{cor:covariant}. The $\Qcal_x(G)$ is a Hopf algebra came from the last Proposition.
\end{proof}

\subsection{Filtration}
Since the shuffle multiplication is compatible with the grading on $T(G)$, the shuffle algebra $\Sh(G)$ admits an ascending filtration
\[
F^r\Sh(G):=T^{0}(G)\oplus \cdots\oplus T^r(G), \quad r\geq 0.
\]
Denote by $F^r\Pcal_x(G)$ and $F^r\Qcal_x(G)$ the images of $F^r\Sh(G)$ under the canonical morphisms. Then the algebras $\Pcal_x(G)$ and $\Qcal_x(G)$ are filtered. Note that $F^r\Pcal_x(G)$ and $F^r\Qcal_x(G)$ consist of linear combinations of iterated path integrals of length at most $r$.

\begin{definition}
A path map $\alpha$ is said to be at least of order $r$ if $(\omega,\alpha)=0$ for any {non-constant element} $\omega\in F^{r-1} \Sh (G)$, $r\geq 1$. Here $(\omega,\alpha)$ is the bilinear pairing induced from (\ref{eq:bilinear-form}).
\end{definition}

Every path map is at least of order $1$. If a path map $\alpha$ is at
least of order $2$, then $\alpha$ is a loop. {Indeed, for every function
$f$, we have
\[
\int_{\alpha} df=0.
\]
Since
\[
\int_{\alpha} df=f(\alpha(n))-f(\alpha(0)),
\]
it follows that $\alpha(n)=\alpha(0)$. Conversely, the order of a loop may not be $2$. Moreover, path maps of higher
order also exist. An example of a path map of order at least $3$ is given below.
}
{
\begin{example}
Let $G$ be the wedge of three directed cycles illustrated below. There are three loops $
a=xu_1u_2x,$ $b=xv_1v_2x,$ and $ c=xr_1r_2x$.
Then it follows that there is a loop $\gamma = [c, [a, b]] = caba^{-1}b^{-1}c^{-1}bab^{-1}a^{-1}.$ We claim that $\gamma$ is at least order $3$.
$$
\begin{tikzpicture}[scale=0.7, >=stealth, thick, every node/.style={font=\small}]
    \node (x)  at (0,0)   {$x$};
    \node (u1) at (1.4,1.2) {$u_1$};
    \node (u2) at (1.4,-1.2) {$u_2$};
    \node (v1) at (-0.4,-1.8) {$v_1$};
    \node (v2) at (-1.4,-0.8) {$v_2$};
    \node (r1) at (-2.1,1.4) {$r_1$};
    \node (r2) at (-0.4,1.8) {$r_2$};
    \draw[->] (x) -- (u1);
    \draw[->] (u1) -- (u2);
    \draw[->] (u2) -- (x);
    \draw[->] (x) -- (v1);
    \draw[->] (v1) -- (v2);
    \draw[->] (v2) -- (x);
    \draw[->] (x) -- (r1);
    \draw[->] (r1) -- (r2);
    \draw[->] (r2) -- (x);
\end{tikzpicture}
$$
It is easy to check that for any $1$-form $\omega$, $(\omega,[a,b]) = 0$, which implies $\beta = [a,b]$ is at least order $2$. Next we check that for any two $1$-forms $\omega_1, \omega_2\in \Omega^1(G),$ $(\omega_1 \omega_2, \gamma) = 0. $ 
By Proposition~3.5,
\begin{align*}
\int_{c \beta c^{-1}}\omega_1\omega_2
=
\int_{c\star\beta}\omega_1\omega_2
+\left(\int_{c\star\beta}\omega_1\right)\left(\int_{c^{-1}}\omega_2\right)
+\int_{c^{-1}}\omega_1\omega_2\\
=
\int_c\omega_1\omega_2+\int_\beta\omega_1\omega_2
-\left(\int_c\omega_1\right)\left(\int_c\omega_2\right)
+\int_c\omega_2\omega_1 = \int_\beta\omega_1\omega_2.
\end{align*}
Further,
\begin{align*}
\int_\gamma\omega_1\omega_2
&=
\int_{c \beta c^{-1}}\omega_1\omega_2
+\left(\int_{c \beta c^{-1}}\omega_1\right)\left(\int_{\beta^{-1}}\omega_2\right)
+\int_{\beta^{-1}}\omega_1\omega_2\\
&=
\int_\beta\omega_1\omega_2+\int_{\beta^{-1}}\omega_1\omega_2=\int_\beta\omega_1\omega_2+\int_\beta\omega_2\omega_1.
\end{align*}
Applying the shuffle identity again, we get
\[
\int_\beta\omega_1\omega_2+\int_\beta\omega_2\omega_1
=
\left(\int_\beta\omega_1\right)\left(\int_\beta\omega_2\right)=0.
\]
Thus,
$\gamma$ is of at least of order $3$.
\end{example}
}
Denote by $F_rL_x(G)$ the set of loops at $x$ that are at least of order $r$, and by $F_r\Bar{L}(G,x)$ the set of elementary equivalence classes of such loops. The following lemma follows directly from the definition.

\begin{lemma}
A path map $\alpha$ is at least of order $r$ if and only if $(u,\alpha)=0$ for any $u\in F^{r-1}\Sh(G)\cap \ker\epsilon_{\Sh(G)}$.
\end{lemma}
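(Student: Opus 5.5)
The lemma says a path map $\alpha$ is at least of order $r$ if and only if $(u,\alpha)=0$ for every $u\in F^{r-1}\Sh(G)\cap\ker\epsilon_{\Sh(G)}$. The plan is to compare the two families of test elements: the non-constant elements of $F^{r-1}\Sh(G)$, used in the definition, and the augmentation-ideal part $F^{r-1}\Sh(G)\cap\ker\epsilon_{\Sh(G)}$. Both are controlled by the homogeneous words $\omega_1\cdots\omega_k$ with $1\le k\le r-1$. The only subtlety is the degree-zero part $T^0(G)$.

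First I decompose $u\in F^{r-1}\Sh(G)$ as $u=u_0+u_1+\cdots+u_{r-1}$ with $u_k\in T^k(G)$. Since $\epsilon_{\Sh(G)}$ vanishes on words of length $\ge 1$, we have $\epsilon_{\Sh(G)}(u)=\epsilon_{\Sh(G)}(u_0)$. Interpreting the degree-zero part as scalars (via the unit $1$, paired by $(1,\alpha)=\int_\alpha 1=1$), $u\in\ker\epsilon_{\Sh(G)}$ means exactly $u_0=0$. Hence $F^{r-1}\Sh(G)\cap\ker\epsilon_{\Sh(G)}=T^1(G)\oplus\cdots\oplus T^{r-1}(G)$.

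For the direction ($\Leftarrow$), assume $(u,\alpha)=0$ on the augmentation ideal and let $\omega$ be a non-constant element of $F^{r-1}\Sh(G)$. Write $\omega=c\cdot 1+\omega'$ with $\omega'\in\ker\epsilon$. Then $(\omega,\alpha)=c+(\omega',\alpha)=c$. The definition of order should be read modulo constants, i.e.\ as testing the non-constant parts $\omega'$, since otherwise the condition would fail for any $\omega=1+\omega'$ and no path map, even of order $1$, would qualify. With this reading, $(\omega',\alpha)=0$ gives the condition in the definition.

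For the direction ($\Rightarrow$), every nonzero $u$ in the augmentation ideal is non-constant, because $u_0=0$ and $u\ne 0$. So the defining condition applies directly and gives $(u,\alpha)=0$. By linearity of the pairing in the first variable, both conditions reduce to $\int_\alpha\omega_1\cdots\omega_k=0$ for all $1\le k\le r-1$ and all $1$-forms $\omega_i$, so they coincide.

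The main obstacle is purely interpretive: fixing the meaning of ``non-constant'' so that the two formulations are literally equivalent. I would handle it by stating explicitly that the test elements in the definition are taken with zero constant term, i.e.\ elements of $\bigoplus_{1\le k\le r-1}T^k(G)$. This is consistent with the paper's remark that every path map is at least of order $1$, since $F^0\Sh(G)\cap\ker\epsilon=0$. Everything else is linearity of $(-,\alpha)$.
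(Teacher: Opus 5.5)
Your proposal is correct and is essentially the paper's argument. The paper only asserts that the lemma follows directly from the definition, and your identification $F^{r-1}\Sh(G)\cap\ker\epsilon_{\Sh(G)}=T^1(G)\oplus\cdots\oplus T^{r-1}(G)$ together with linearity of the pairing is exactly that. Your remark that ``non-constant'' must be read as ``with zero constant term'' is the right implicit convention: under the literal reading, an element such as $1+\omega'$ would make the backward direction fail for $r\geq 2$, for instance at the constant loop.
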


From Proposition \ref{prop:ii-inv-path}, if $\alpha\in F_rL_x(G)$ for some $r\geq 1$, then its inverse $\alpha^{-1}$ is also at least of order $r$.

\begin{remark}
Clearly, $\Bar{L}_x(G)$ admits a descending filtration
\[
\Bar{L}_x(G)=F_1\Bar{L}_x(G) \supseteq F_2\Bar{L}_x(G)\supseteq \cdots.
\]
\end{remark}

\begin{corollary}\label{cor:filtered}
Let $\alpha\in F_r\Bar{L}_x(G)$ and let $\beta$ be a path map from $x$ that is at least of order $s$. If $u\in F^{r+s-1}\Sh(G)\cap \ker\epsilon_{\Sh(G)}$, then
\[
(u,\alpha\star\beta)=(u,\alpha)+(u,\beta).
\]
\end{corollary}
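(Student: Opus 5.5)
It suffices to treat a single monomial $u=\omega_1\cdots\omega_k$ with $1\le k\le r+s-1$, since the claimed identity is linear in $u$ and $F^{r+s-1}\Sh(G)\cap\ker\epsilon_{\Sh(G)}$ is spanned by such monomials. Because $\alpha$ is a loop at $x$ and $\beta$ starts at $x$, we have $\alpha(n)=\beta(0)$, so Proposition~\ref{prop:ii-conca-path} applies and gives
\[
(u,\alpha\star\beta)=\sum_{i=0}^{k}\int_{\alpha}\omega_1\cdots\omega_i\int_{\beta}\omega_{i+1}\cdots\omega_k .
\]
The terms $i=k$ and $i=0$ are exactly $(u,\alpha)$ and $(u,\beta)$, using $\int_\alpha 1=\int_\beta 1=1$. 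The whole proof therefore reduces to showing that every mixed term with $1\le i\le k-1$ vanishes.

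For a mixed term, both factors are pairings with non-constant monomials: $\omega_1\cdots\omega_i$ has length $i\ge1$, and $\omega_{i+1}\cdots\omega_k$ has length $k-i\ge1$. Since $\alpha$ is at least of order $r$, the first factor vanishes whenever $i\le r-1$, because then $\omega_1\cdots\omega_i\in F^{r-1}\Sh(G)\cap\ker\epsilon_{\Sh(G)}$. Since $\beta$ is at least of order $s$, the second factor vanishes whenever $k-i\le s-1$. If neither held, we would have $i\ge r$ and $k-i\ge s$, so $k\ge r+s$, contradicting $k\le r+s-1$. Hence each mixed term is zero.

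The only points needing care are bookkeeping. First, the order condition is stated for path maps, while $\alpha$ is given as a class in $F_r\bar{L}_x(G)$. This is harmless: by Corollary~\ref{trivial} and Proposition~\ref{prop:iter-ele-equ}, all pairings are independent of the representative, so we may work with any representative loop. Second, the lemma characterizing order via $F^{r-1}\Sh(G)\cap\ker\epsilon_{\Sh(G)}$ applies directly to monomials of length between $1$ and $r-1$. I do not expect a genuine obstacle; the pigeonhole on $(i,k-i)$ is the heart of the argument.
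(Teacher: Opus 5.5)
Your proof is correct and follows the same route as the paper. The paper also reduces to a monomial $u=\omega_1\cdots\omega_l$ with $l\le r+s-1$ and applies Proposition~\ref{prop:ii-conca-path}; it leaves implicit the vanishing of the mixed terms, which you verify explicitly by the pigeonhole argument on $(i,k-i)$.
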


\begin{proof}
Let $u=\omega_1\cdots\omega_l$ with $l\leq r+s-1$. By Proposition \ref{prop:ii-conca-path}, we have
\[
\int_{\alpha\star\beta}u=\int_{\alpha}u+\int_{\beta}u.
\]
\end{proof}

\begin{corollary}\label{cor:iter-invs}
If $\alpha\in F_r\Bar{L}_x(G)$ and $u\in F^{2r-1}\Sh(G)\cap \ker\epsilon_{\Sh(G)}$, then
\[
(u,\alpha^{-1})=-(u,\alpha).
\]
In particular, if $u\in F^{r-1}\Sh(G)\cap \ker\epsilon_{\Sh(G)}$, then
\[
(u,\alpha^{-1})+(u,\alpha)=0.
\]
\end{corollary}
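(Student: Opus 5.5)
The plan is to derive the statement from Corollary~\ref{cor:filtered} applied to the loop $\alpha\star\alpha^{-1}$, together with Proposition~\ref{prop:iter-ele-equ}, which says every iterated path integral over $\alpha\star\alpha^{-1}$ vanishes.

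First I check that $\alpha^{-1}$ is a legitimate second factor. Since $\alpha$ is a loop at $x$, $\alpha^{-1}$ is again a loop, hence a path map from $x$. By the remark following Proposition~\ref{prop:ii-inv-path}, $\alpha^{-1}$ is at least of order $r$. Indeed, for $u=\omega_1\cdots\omega_l$ with $1\le l\le r-1$ we have
\[
(u,\alpha^{-1})=(-1)^l\int_\alpha\omega_l\cdots\omega_1=0,
\]
because $\omega_l\cdots\omega_1\in F^{r-1}\Sh(G)\cap\ker\epsilon_{\Sh(G)}$.

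Next, take $u\in F^{2r-1}\Sh(G)\cap\ker\epsilon_{\Sh(G)}$. By linearity I may assume $u=\omega_1\cdots\omega_l$ with $1\le l\le 2r-1$. Corollary~\ref{cor:filtered}, applied with $s=r$ and $\beta=\alpha^{-1}$, gives
\[
(u,\alpha\star\alpha^{-1})=(u,\alpha)+(u,\alpha^{-1}).
\]
If one prefers not to quote the corollary, the same identity follows from Proposition~\ref{prop:ii-conca-path}. In the expansion $\sum_{i=0}^l\int_\alpha\omega_1\cdots\omega_i\int_{\alpha^{-1}}\omega_{i+1}\cdots\omega_l$, every middle term with $1\le i\le l-1$ vanishes. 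This is because either $i\le r-1$ or $l-i\le r-1$, since $l\le 2r-1$, so one of the two factors has length between $1$ and $r-1$ and is killed by the order hypothesis on $\alpha$ or on $\alpha^{-1}$. Only the terms $i=l$ and $i=0$ survive, and these give $\int_\alpha u+\int_{\alpha^{-1}}u$.

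Finally, Proposition~\ref{prop:iter-ele-equ} gives $\int_{\alpha\star\alpha^{-1}}\omega_1\cdots\omega_l=0$ for all $l\ge1$. Hence $(u,\alpha^{-1})=-(u,\alpha)$.

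For the ``in particular'' clause, $F^{r-1}\subset F^{2r-1}$, so the identity already gives the sum zero. In fact both terms vanish separately, by the order hypothesis on $\alpha$ and on $\alpha^{-1}$. There is one more point to settle. The statement is phrased for classes in $F_r\Bar{L}_x(G)$, so I must check that the pairing and the inverse are well defined on elementary equivalence classes. This follows from Theorem~\ref{thm:paths-equ-int} and the fact that inversion respects the two elementary moves.

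The only real obstacle is the degree bookkeeping in the middle terms, namely confirming that $l\le 2r-1$ forces one factor into lengths $1,\dots,r-1$. That is immediate, so no serious difficulty is expected.
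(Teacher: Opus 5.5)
Your proposal is correct and follows the paper's proof, which simply applies Corollary~\ref{cor:filtered} with $\beta=\alpha^{-1}$. You also make explicit two facts the paper leaves implicit: that $\alpha^{-1}$ is again of order $r$, so $s=r$ and $r+s-1=2r-1$, and that every iterated integral over $\alpha\star\alpha^{-1}$ vanishes by Proposition~\ref{prop:iter-ele-equ}.
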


\begin{proof}
This follows from Corollary \ref{cor:filtered} by taking $\beta=\alpha^{-1}$.
\end{proof}

Let $\alpha$ and $\beta$ be loops on $G$, and write $[\alpha,\beta]:=\alpha\star\beta\star\alpha^{-1}\star\beta^{-1}$.

\begin{lemma}\label{lem:iter-liebre-path}
If $\alpha\in F_rL_x(G)$ and $\beta\in F_sL_x(G)$ with $r\leq s$, then $[\alpha,\beta]\in F_{r+s}L_x(G)$ and
\[
(\omega_1\cdots\omega_{r+s},[\alpha,\beta])=(\omega_1\cdots\omega_r,\alpha)(\omega_{r+1}\cdots\omega_{r+s},\beta)-(\omega_1\cdots\omega_s,\beta)(\omega_{s+1}\cdots\omega_{r+s},\alpha).
\]
\end{lemma}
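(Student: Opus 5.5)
We expand the iterated integral over the four-fold concatenation $[\alpha,\beta]=\alpha\star\beta\star\alpha^{-1}\star\beta^{-1}$ using Proposition~\ref{prop:ii-conca-path} three times:
\[
\int_{[\alpha,\beta]}\omega_1\cdots\omega_l=\sum_{0\le i\le j\le k\le l}\int_\alpha\omega_1\cdots\omega_i\int_\beta\omega_{i+1}\cdots\omega_j\int_{\alpha^{-1}}\omega_{j+1}\cdots\omega_k\int_{\beta^{-1}}\omega_{k+1}\cdots\omega_l .
\]
By Proposition~\ref{prop:ii-inv-path}, a block over $\alpha^{-1}$ of length $m$ is, up to sign, an integral over $\alpha$ of length $m$. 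The order hypotheses then say that every nonempty block over $\alpha^{\pm1}$ of length less than $r$ vanishes, and every nonempty block over $\beta^{\pm1}$ of length less than $s$ vanishes.

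For $l\le r+s$ we classify the surviving terms by which blocks are nonempty. Suppose both $\alpha$-type blocks are nonempty and some $\beta$-type block is nonempty. Their total length is then at least $2r+s>l$, unless $r\ge$ \dots. More precisely, each nonempty block has length at least its order, so this case forces $2r+s\le l\le r+s$, which is impossible. The symmetric case with two nonempty $\beta$-type blocks and one $\alpha$-type block is impossible for the same reason.

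The surviving terms are therefore of three kinds:
\begin{itemize}
\item terms involving only the $\alpha$-blocks, which sum to $\int_{\alpha\star\alpha^{-1}}=0$ (Proposition~\ref{prop:iter-ele-equ});
\item terms involving only the $\beta$-blocks, which vanish for the same reason;
\item terms with exactly one nonempty $\alpha$-type block and exactly one nonempty $\beta$-type block.
\end{itemize}
When exactly one $\alpha$-block is nonempty, one could worry about it being split between $\alpha$ and $\alpha^{-1}$ together with a $\beta$ block. That configuration is excluded because it needs length at least $2r+s$.

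The one-plus-one terms fall into four placements, $(\alpha,\beta)$, $(\alpha,\beta^{-1})$, $(\beta,\alpha^{-1})$ and $(\alpha^{-1},\beta^{-1})$, each requiring block lengths of at least $r$ and $s$. If $l<r+s$, all of them vanish, which gives $[\alpha,\beta]\in F_{r+s}L_x(G)$. Constants in $F^{r+s-1}$ are handled by the lemma preceding Corollary~\ref{cor:filtered}.

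For $l=r+s$ the block lengths are forced. The placements give:
\begin{itemize}
\item $(\alpha,\beta)$: the split $(r,s)$, contributing $(\omega_1\cdots\omega_r,\alpha)(\omega_{r+1}\cdots\omega_{r+s},\beta)$;
\item $(\beta,\alpha^{-1})$: the split $(s,r)$, contributing $(\omega_1\cdots\omega_s,\beta)(\omega_{s+1}\cdots\omega_{r+s},\alpha^{-1})$;
\item $(\alpha,\beta^{-1})$: contributing $(\cdot,\alpha)(\cdot,\beta^{-1})$;
\item $(\alpha^{-1},\beta^{-1})$: contributing $(\cdot,\alpha^{-1})(\cdot,\beta^{-1})$.
\end{itemize}
In the last two cases the split is $(r,s)$ with the same words as in the first.

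Now $\alpha$ lies in $F_r$, and a word of length $r\le 2r-1$ applied to $\alpha^{-1}$ gives the negative, by Corollary~\ref{cor:iter-invs}. Similarly a word of length $s\le 2s-1$ gives $(\cdot,\beta^{-1})=-(\cdot,\beta)$. The contributions therefore become
\[
AB-AB+AB-B'A',
\]
where $A=(\omega_1\cdots\omega_r,\alpha)$, $B=(\omega_{r+1}\cdots\omega_{r+s},\beta)$, $B'=(\omega_1\cdots\omega_s,\beta)$ and $A'=(\omega_{s+1}\cdots\omega_{r+s},\alpha)$. Here the $(\alpha,\beta^{-1})$ term gives $-AB$ and the $(\alpha^{-1},\beta^{-1})$ term gives $+AB$, so the total is $AB-B'A'$, which is the claimed formula.

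The main obstacle is the careful bookkeeping in the case analysis. Two points need care: the zero-length (empty) blocks contribute the value $1$ rather than $0$, and the pure-$\alpha$ and pure-$\beta$ parts must be recognized as integrals over $\alpha\star\alpha^{-1}$ and $\beta\star\beta^{-1}$ so that they cancel.
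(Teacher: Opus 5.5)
Your proof is correct and follows essentially the paper's route: expand $[\alpha,\beta]$ with Proposition~\ref{prop:ii-conca-path}, kill the short blocks using the order hypotheses, cancel the pure $\alpha\star\alpha^{-1}$ and $\beta\star\beta^{-1}$ contributions, and apply Corollary~\ref{cor:iter-invs} in degrees $r$ and $s$ to obtain $AB-AB+AB-B'A'$. The paper instead splits $[\alpha,\beta]$ as $(\alpha\star\beta)\star(\alpha^{-1}\star\beta^{-1})$ and linearizes with Corollary~\ref{cor:filtered}, whereas your single four-block expansion tracks the empty blocks (value $1$) explicitly, which the paper's sums over $i=0,\dots,m$ gloss over; you should delete the garbled ``unless $r\ge$ \dots'' fragment.
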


\begin{proof}
Let $m\geq 1$. Since $\alpha\alpha^{-1}$ and $\beta\beta^{-1}$ are elementarily equivalent to the trivial path map, for any $m$,
\begin{equation}\label{eq:sum-a-ainv}
\begin{aligned}
\sum_{i=0}^m(\omega_1\cdots\omega_i,\alpha)(\omega_{i+1}\cdots\omega_m,\alpha^{-1})=0,~
\sum_{i=0}^m(\omega_1\cdots\omega_i,\beta)(\omega_{i+1}\cdots\omega_m,\beta^{-1})=0.
\end{aligned}
\end{equation}

From the definition of the filtration, $(\omega_1\cdots\omega_m,\alpha)=0$ for $1\leq m\leq r-1$ and $(\omega_1\cdots\omega_m,\beta)=0$ for $1\leq m\leq s-1$. Hence
\begin{equation}\label{eq:sum-a-binv}
\begin{aligned}
\sum_{i=0}^m(\omega_1\cdots\omega_i,\alpha)(\omega_{i+1}\cdots\omega_m,\beta^{-1})=0,\quad m\leq r+s-1,\\
\sum_{i=0}^m(\omega_1\cdots\omega_i,\beta)(\omega_{i+1}\cdots\omega_m,\alpha^{-1})=0,\quad m\leq r+s-1.
\end{aligned}
\end{equation}

Assume $m\leq r+s-1$. By Proposition \ref{prop:ii-conca-path} and Corollary \ref{cor:filtered},
\begin{equation*}
\begin{aligned}
(\omega_1\cdots\omega_m,[\alpha,\beta])
=\sum_{i=0}^{m}(\omega_1\cdots\omega_i,\alpha\star\beta)(\omega_{i+1}\cdots\omega_m,\alpha^{-1}\star\beta^{-1})\\
=\sum_{i=0}^{m}[(\omega_1\cdots\omega_i,\alpha)(\omega_{i+1}\cdots\omega_m,\alpha^{-1})
+(\omega_1\cdots\omega_i,\beta)(\omega_{i+1}\cdots\omega_m,\alpha^{-1})\\
+(\omega_1\cdots\omega_i,\alpha)(\omega_{i+1}\cdots\omega_m,\beta^{-1})
+(\omega_1\cdots\omega_i,\beta)(\omega_{i+1}\cdots\omega_m,\beta^{-1})].
\end{aligned}
\end{equation*}
Using (\ref{eq:sum-a-ainv}) and (\ref{eq:sum-a-binv}), this vanishes. Hence $[\alpha,\beta]\in F_{r+s}L_x(G)$.

For $m=r+s$, we have
\begin{equation}\label{eq:rpluss}
\begin{aligned}
\ &(\omega_1\cdots\omega_{r+s},\alpha\star\beta)
=(\omega_1\cdots\omega_{r+s},\alpha)
+(\omega_1\cdots\omega_{r+s},\beta)\\
&+(\omega_1\cdots\omega_r,\alpha)(\omega_{r+1}\cdots\omega_{r+s},\beta),\\
\ &(\omega_1\cdots\omega_{r+s},\alpha^{-1}\star\beta^{-1})
=(\omega_1\cdots\omega_{r+s},\alpha^{-1})
+(\omega_1\cdots\omega_{r+s},\beta^{-1})\\
&+(\omega_1\cdots\omega_r,\alpha^{-1})(\omega_{r+1}\cdots\omega_{r+s},\beta^{-1}).
\end{aligned}
\end{equation}By Corollary~\ref{cor:iter-invs},
\[
(\omega_1\cdots\omega_r,\alpha)
+(\omega_1\cdots\omega_r,\alpha^{-1})=0,\quad
(\omega_{s+1}\cdots\omega_{r+s},\alpha^{-1})
+(\omega_{s+1}\cdots\omega_{r+s},\alpha)=0,
\]
which imply
\begin{equation}\label{eq:0-lots}
\begin{aligned}
0
=&(\omega_1\cdots\omega_r,\alpha^{-1})(\omega_{r+1}\cdots\omega_{r+s},\beta^{-1})
+(\omega_1\cdots\omega_r,\alpha)(\omega_{r+1}\cdots\omega_{r+s},\beta^{-1})\\
&+(\omega_1\cdots\omega_s,\beta)(\omega_{s+1}\cdots\omega_{r+s},\alpha^{-1})
+(\omega_1\cdots\omega_s,\beta)(\omega_{s+1}\cdots\omega_{r+s},\alpha).
\end{aligned}
\end{equation}
Then
\begin{equation*}
\begin{aligned}
(\omega_1\cdots\omega_{r+s},[\alpha,\beta])
=&(\omega_1\cdots\omega_{r+s},\alpha\star\beta)
+(\omega_1\cdots\omega_{r+s},\alpha^{-1}\star\beta^{-1})\\
&+\sum_{i=1}^{r+s-1}
(\omega_1\cdots\omega_i,\alpha\star\beta)
(\omega_{i+1}\cdots\omega_{r+s},\alpha^{-1}\star\beta^{-1}).
\end{aligned}
\end{equation*}
Applying (\ref{eq:rpluss}) and the filtration property, we obtain
\begin{equation*}
\begin{aligned}
(\omega_1\cdots\omega_r,\alpha)(\omega_{r+1}\cdots\omega_{r+s},\beta)
+(\omega_1\cdots\omega_r,\alpha^{-1})(\omega_{r+1}\cdots\omega_{r+s},\beta^{-1})\\
+(\omega_1\cdots\omega_r,\alpha)(\omega_{r+1}\cdots\omega_{r+s},\beta^{-1})
+(\omega_1\cdots\omega_s,\beta)(\omega_{s+1}\cdots\omega_{r+s},\alpha^{-1}).
\end{aligned}
\end{equation*}
Using (\ref{eq:0-lots}), this reduces to
\begin{equation*}
(\omega_1\cdots\omega_r,\alpha)(\omega_{r+1}\cdots\omega_{r+s},\beta)
-(\omega_1\cdots\omega_s,\beta)(\omega_{s+1}\cdots\omega_{r+s},\alpha).
\end{equation*}
\end{proof}
\begin{proposition}
The space $F_r\Bar{L}_x(G)$ is a normal subgroup of $\Bar{L}_x(G)$ and
\begin{equation*}
	[F_r\Bar{L}_x(G),F_s\Bar{L}_x(G)]\subset F_{r+s}\Bar{L}_x(G).
\end{equation*}
	\end{proposition}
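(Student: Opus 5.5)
We first make the group structure on $\Bar{L}_x(G)$ explicit and then check the three parts of the statement in turn: that $F_r\Bar{L}_x(G)$ is a subgroup, that it is normal, and the commutator inclusion.

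\textbf{Group structure.} The group $\Bar{L}_x(G)$ consists of elementary equivalence classes of loops at $x$, with multiplication induced by concatenation. The identity is the class of the constant loop. The inverse is $[\alpha]^{-1}=[\alpha^{-1}]$, since $\alpha\star\alpha^{-1}$ is elementarily equivalent to the trivial loop by the definition of elementary equivalence. Membership in $F_r$ is well defined on classes, because the pairing $(u,\alpha)$ depends only on the class of $\alpha$ (Proposition~\ref{prop:iter-ele-equ} together with Corollary~\ref{trivial}).

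\textbf{Subgroup.} Let $\alpha,\beta\in F_rL_x(G)$ and $u\in F^{r-1}\Sh(G)\cap\ker\epsilon_{\Sh(G)}$. Applying Corollary~\ref{cor:filtered} with $s=r$ gives $(u,\alpha\star\beta)=(u,\alpha)+(u,\beta)=0$, since $r-1\le 2r-1$. More directly, by Proposition~\ref{prop:ii-conca-path} every cross term has a factor of length between $1$ and $r-1$ paired with $\alpha$ or $\beta$, so it vanishes. Closure under inverses is Corollary~\ref{cor:iter-invs}, or the remark following the definition of order based on Proposition~\ref{prop:ii-inv-path}. The constant loop pairs to zero against every word of positive length, so it lies in $F_r$.

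\textbf{Normality.} Let $\gamma$ be any loop at $x$, $\alpha\in F_r$, and $m\le r-1$. We expand $(\omega_1\cdots\omega_m,\gamma\star\alpha\star\gamma^{-1})$ using Proposition~\ref{prop:ii-conca-path} twice. Each term contains a middle factor $(\omega_{i+1}\cdots\omega_j,\alpha)$. This factor vanishes unless $i=j$, in which case it equals $1$. What remains is
\[
\sum_i(\omega_1\cdots\omega_i,\gamma)(\omega_{i+1}\cdots\omega_m,\gamma^{-1})=(\omega_1\cdots\omega_m,\gamma\star\gamma^{-1})=0,
\]
which vanishes by Proposition~\ref{prop:iter-ele-equ}. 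Hence $\gamma\alpha\gamma^{-1}\in F_r$.

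\textbf{Commutator inclusion.} For generators, Lemma~\ref{lem:iter-liebre-path} gives $[\alpha,\beta]\in F_{r+s}$ for $\alpha\in F_r$, $\beta\in F_s$; by symmetry of the commutator (its inverse is $[\beta,\alpha]$) we may assume $r\le s$. It remains to show that the subgroup generated by these commutators lies in $F_{r+s}$. This follows because $F_{r+s}\Bar{L}_x(G)$ is itself a subgroup, by the subgroup step above.

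The main obstacle is the normality computation. It needs a careful triple expansion of the concatenation formula, and one must note that the vanishing for $m\le r-1$ is exactly what isolates the $\gamma\star\gamma^{-1}$ sum.
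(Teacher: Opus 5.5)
Your proof is correct. For two of the three parts it matches the paper: closure under products via Corollary~\ref{cor:filtered}, and the commutator inclusion via Lemma~\ref{lem:iter-liebre-path}. You also make explicit two points the paper leaves tacit. One is the reduction to $r\le s$ using $[\alpha,\beta]^{-1}=[\beta,\alpha]$. The other is that the subgroup generated by such commutators stays inside $F_{r+s}\Bar{L}_x(G)$, because the latter is itself a subgroup.

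The genuine difference is in normality. The paper writes $\beta\star\alpha\star\beta^{-1}=[\beta,\alpha]\star\alpha$. It applies Lemma~\ref{lem:iter-liebre-path} with $\beta\in F_1$ to get $[\beta,\alpha]\in F_{r+1}$, and then splits the pairing additively using Corollary~\ref{cor:filtered}. You instead expand $(\omega_1\cdots\omega_m,\gamma\star\alpha\star\gamma^{-1})$ directly with Proposition~\ref{prop:ii-conca-path}. For $m\le r-1$ only the terms with empty middle word survive, and these reassemble into $(\omega_1\cdots\omega_m,\gamma\star\gamma^{-1})=0$.

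Your route is more elementary and self-contained. It needs only the concatenation formula and Proposition~\ref{prop:iter-ele-equ}, so normality does not depend on the commutator lemma. That lemma's conclusion is correct, but its written proof in the paper treats the boundary terms $i=0$ and $i=m$ of its expansion loosely. The paper's route is shorter on the page because it recycles the lemma. It also shows the structural reason for normality: a conjugate of $\alpha$ differs from $\alpha$ by a commutator lying one filtration step deeper.
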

\begin{proof}
Corollary \ref{cor:filtered} implies that $F_r\Bar{L}_x(G)$ is a subgroup of $\Bar{L}_x(G)$, and Lemma \ref{lem:iter-liebre-path} yields
\[
[F_r\Bar{L}_x(G),F_s\Bar{L}_x(G)]\subset F_{r+s}\Bar{L}_x(G).
\]
It remains to verify that $F_r \Bar{L}_x(G)$ is normal.
For any $\alpha\in F_r\Bar{L}_x(G)$, $\beta\in \Bar{L}_x(G)$, and $u\in F^{r-1}\Sh(G)\cap\ker\epsilon_{\Sh(G)}$,
$(u,\beta\star\alpha\star\beta^{-1})
=(u,[\beta,\alpha]\star\alpha)
=(u,[\beta,\alpha])+(u,\alpha)=0.$
\end{proof}
\section{Homotopy Invariant Hopf Algebras $\pi^1$}
Let $G=(G_0,G_1)$ be a connected digraph with fixed vertices $x,y$. Let $\omega_1,\cdots,\omega_r$ be 1-forms on $G$.
\begin{definition}
An element {$\int_x \omega_1\cdots\omega_r$} is called \emph{independent of $C_{\partial}$-homotopy},  if for any path maps $\alpha,\alpha'$ from $x$ to $y$ with $\alpha \simeq_{C} \alpha'$, {$\int_{\alpha}\omega_1\cdots\omega_r=\int_{\alpha'}\omega_1\cdots\omega_r$}.
\end{definition}
\begin{definition}
An element $\oint_x \omega_1\cdots\omega_r$ is called \emph{independent of $C_{\partial}$-homotopy}, if for any loops $\alpha,\alpha'$ based at $x$ with $\alpha \simeq_{C} \alpha'$ {$\oint_{\alpha} \omega_1\cdots\omega_r=\oint_{\alpha'}\omega_1\cdots\omega_r$}  .
\end{definition}
The elements $\int_x \omega_1\cdots\omega_r$ that are independent of $C_{\partial}$-homotopy form a subalgebra $\Gamma_x(G)$ of $\Pcal_x(G)$. Similarly, the elements $\oint_x \omega_1\cdots\omega_r$ with this property generate a subalgebra $\pi^1_x(G)$ of $\Qcal_x(G)$.

Let $\omega$ be a closed 1-form on $G$, i.e. $d\omega=0$ in $\Omega_2(G)$. The following result shows that $\pi^1_x(G)$ is non-empty; in particular, it contains all closed 1-forms.
\begin{lemma}\label{lem:tri-square}
Let $x$ be a fixed vertex and $\omega$ a 1-form on $G$. The element $\int_x \omega$ (resp. $\oint_x \omega$) is independent of $C_{\partial}$-homotopy if and only if $\omega$ satisfies:
\begin{enumerate}
    \item $\langle \omega, a_1 \star a_2 \rangle = \langle \omega, a_3 \rangle$ for any subdigraph isomorphic to a standard triangle,
    \item $\langle \omega, a_1 \star a_2 \rangle = \langle \omega, a_3 \star a_4 \rangle$ for any subdigraph isomorphic to a standard square,
    \item $\langle \omega, a_1 \star a_2 \rangle = 0$ for any subdigraph isomorphic to a standard double edge.
\end{enumerate}
\end{lemma}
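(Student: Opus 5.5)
For $r=1$ the integral is additive: $\int_\alpha\omega=\sum_{t=1}^n\langle\omega,\alpha_t\rangle$, and by Proposition~\ref{prop:ii-conca-path} with $r=1$ we have $\int_{\gamma'\star\gamma\star\gamma''}\omega=\int_{\gamma'}\omega+\int_\gamma\omega+\int_{\gamma''}\omega$. The plan is to use this additivity to reduce $C_\partial$-homotopy invariance to a check on the local moves (i)--(v) of the combinatorial description of $C_\partial$-homotopy from \cite{grigor2018fundamental}. Each move replaces a short segment $\gamma$ of a path map by another segment $\gamma_1$ with the same endpoints and leaves the rest unchanged. So $\int_x\omega$ is independent of $C_\partial$-homotopy exactly when $\int_\gamma\omega=\int_{\gamma_1}\omega$ for every segment pair occurring in a move. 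The loop case $\oint_x\omega$ is identical, since the moves preserve endpoints.

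\emph{Sufficiency.} I will go through the moves one by one.
\begin{enumerate}
\item Moves (iv) and (v) need no hypothesis. The segment $v_0v_1v_0$ is $a\star a^{-1}$ for an arrow or trivial arrow $a$, and its integral is $\omega(a)-\omega(a)=0$ by the sign convention $\langle\omega,a^{-1}\rangle=-\omega(a)$. The segment $v_0v_0$ is a trivial arrow, which contributes $0$.
\item Move (i) replaces two steps around a triangle by the third side. The triangle is a relabelling $\sigma$ of the standard triangle $v_0\to v_1\to v_2$, $v_0\to v_2$. Whichever orientation the two traversed steps have, the required identity is one of $\omega(a_1)+\omega(a_2)=\omega(a_3)$ or a rearrangement of it, e.g.\ $\omega(a_1)=\omega(a_3)-\omega(a_2)$. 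So condition (1) covers all six cases.
\item Move (ii) swaps the two sides of a square. After the cyclic relabelling this is $\omega(a_1)+\omega(a_2)=\omega(a_3)+\omega(a_4)$, possibly traversed backwards, which only negates both sides. This is condition (2).
\item Move (iii) reduces to (ii) composed with (iv)-type backtracking, so it follows from condition (2).
\item A double edge can play the role of a degenerate triangle or square inside a move, namely when two consecutive vertices are joined in both directions. The relevant identity is then $\omega(a_1)+\omega(a_2)=0$, which is condition (3).
\end{enumerate}
Here $\langle\omega,a_1\star a_2\rangle$ is read as $\int_{a_1\star a_2}\omega$.

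\emph{Necessity.} I will exhibit, for each configuration, two homotopic path maps from $x$ whose integrals differ by exactly the defect in the corresponding condition. Take a standard triangle (resp.\ square, double edge) in $G$ and a path $\gamma$ from $x$ to $v_0$, which exists because $G$ is connected. Compare $\gamma\star a_1\star a_2$ with $\gamma\star a_3$ in the triangle case, and $\gamma\star a_1\star a_2\star a_4^{-1}\star a_3^{-1}$ with $\gamma\star r_{v_0}$ (or with $\gamma$) in the square case. These pairs are $C_\partial$-homotopic by moves (i) and (ii). By additivity their integrals differ by $\omega(a_1)+\omega(a_2)-\omega(a_3)$, respectively $\omega(a_1)+\omega(a_2)-\omega(a_3)-\omega(a_4)$. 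In the double edge case, $\gamma\star a_1\star a_2$ is homotopic to $\gamma$: it is the contractible standard double edge, and one can apply (iv) after recognising $v_0v_1v_0$. The difference there is $\omega(a_1)+\omega(a_2)$.

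For the loop version I will close each comparison path with $\gamma^{-1}$ to get loops at $x$. The $\gamma$ and $\gamma^{-1}$ contributions cancel, again by Proposition~\ref{prop:ii-conca-path} with $r=1$.

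The main obstacle is making sure the case analysis over orientations in (i)--(iii) is complete. Triangles allow any permutation $\sigma$ and the move may traverse either the two-step or the one-step side, so I would spell out that every such case is an additive rearrangement of conditions (1) and (2). A second point to settle is where a double edge actually enters the combinatorial moves; I expect it to appear through (iv) when $v_0\to v_1$ and $v_1\to v_0$ are distinct arrows, so that $v_0v_1v_0$ may use $a_1$ and then $a_2$ rather than $a_1$ and $a_1^{-1}$.
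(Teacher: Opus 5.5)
Your argument is correct in substance, but it takes a different route from the paper. The paper never uses the local moves (i)--(v).

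\begin{itemize}
\item For sufficiency it works directly with a one-step homotopy $F\colon C_h\to G$ (or $C_h^{-}$). The cylinder is tiled by triangles (where $h(i)=h(i+1)$) and squares (where $h(i)\neq h(i+1)$). So the conditions on the images of these cells give $\int_\alpha\omega=\int_{\alpha'}\omega$, in effect a discrete Stokes argument.
\item For necessity it just quotes $a_1\star a_2\simeq_C a_3$, $a_1\star a_2\simeq_C a_3\star a_4$ and $a_1\star a_2\simeq_C r_{v_0}$.
\end{itemize}

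That route is self-contained given the definition of $C_\partial$-homotopy. However, it leaves implicit the cells whose images under $F$ degenerate: two vertices of a cell with the same image can produce a double edge, and that is where condition (3) is used. Your route depends on the quoted combinatorial theorem, but it makes the role of each condition visible. Your necessity argument is also more careful than the paper's. The paper's relations live at $v_0$, not at the base point. Your transport along a connecting path $\gamma$ (using the standing connectedness assumption), with the $\gamma$-contributions cancelling by additivity, is what actually produces paths from $x$.

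Three repairs are needed.

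\begin{enumerate}
\item Item 1 is false as stated. If $v_0\to v_1$ and $v_1\to v_0$ are both arrows, the vertex sequence $v_0v_1v_0$ is also realized by $a_1\star a_2$, whose integral is $\omega(a_1)+\omega(a_2)$. The clean fix, which also settles the point you raise at the end, is to show first that under (3) the integral $\int_\alpha\omega$ depends only on the vertex sequence of $\alpha$. A step between distinct vertices $u,w$ admits two realizations only when both $u\to w$ and $w\to u$ exist. These contribute $\omega(u\to w)$ and $-\omega(w\to u)$, which are equal precisely by (3).

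This step is genuinely required, because the moves act on vertex sequences, and path maps with the same vertex sequence are $C_\partial$-homotopic. For a double edge, $a_1\simeq_C a_1\star a_2\star a_2^{-1}\simeq_C a_2^{-1}$ by two one-step homotopies. Once this is in place you may pick any realization in each move, and item 5 becomes unnecessary.

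\item The orientation bookkeeping in (i)--(iii) can be done uniformly. In each move, the old segment followed by the reverse of the new one is the boundary loop of the triangle or square. So the two integrals differ by $\pm$ the boundary sum, i.e.\ $\pm$ the defect in (1) or (2), whatever the permutation or starting vertex. For a rotated square the identity is a rearrangement of (2), not merely a negation of both sides as you wrote in item 3, but this argument makes that irrelevant.

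\item In the loop version of necessity, the triangle comparison paths end at $v_2$. Close them with $a_3^{-1}\star\gamma^{-1}$ rather than $\gamma^{-1}$.
\end{enumerate}
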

\begin{proof}
The necessity follows from the relations $a_1\star a_2\simeq_C a_3$ in a triangle, $a_1\star a_2\simeq_C a_3\star a_4$ in a square, and $a_1\star a_2\simeq_C r_{v_0}$ in the double edge case.

For the converse, let $\alpha \colon I_n \to G$ and $\alpha' \colon I_m \to G$ be path maps from $x$ to $y$ with $m \ge n$, and assume they are one-step $C_{\partial}$-homotopic. Then there exist maps $h \colon I_m \to I_n$ and $H \colon C_h \to G$ or $H \colon C_h^- \to G$ such that $H|_{I_n} = \alpha$ and $H|_{I_m} = \alpha'$. Since $C_h$ and $C_h^-$ decompose into triangles and squares, the conditions above imply
\begin{equation*}
    \int_{\alpha} \omega = \int_{\alpha'} \omega .
\end{equation*}
\end{proof}

\begin{proposition}\label{prop:closed-ind-path}
Let $G^{*}=(G_0,G_1,x)$ be a based digraph and $\omega$ a 1-form on $G$. The element $\int_x \omega$ (resp. $\oint_x\omega$) is independent of $C_{\partial}$-homotopy if and only if $\omega$ is closed.
\end{proposition}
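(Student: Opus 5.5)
The plan is to reduce the statement to Lemma~\ref{lem:tri-square}. That lemma already characterizes $C_\partial$-homotopy independence of $\int_x\omega$ (resp.\ $\oint_x\omega$) by three local conditions on triangles, squares and double edges. So it suffices to show that these three conditions together are equivalent to $d\omega=0$ in $\Omega^2(G)$. I would make this precise by computing $d\omega$ explicitly in terms of the values $\omega(a)$, $a\in G_1$.

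First, I recall the structure of $\Omega_2(G)$ from \cite{grigor2012homologies}. It is spanned by three kinds of elements:
\begin{itemize}
\item triangles $e_{v_0v_1v_2}$ with $v_0\to v_1\to v_2$ and $v_0\to v_2$;
\item squares $e_{v_0v_1v_3}-e_{v_0v_2v_3}$;
\item double edges $e_{v_0v_1v_0}$.
\end{itemize}
Since $\Omega^2(G)$ is dual to $\Omega_2(G)$ and $d\omega=\omega\circ\partial$, closedness of $\omega$ is equivalent to $\omega(\partial c)=0$ for each such generator $c$. I would then compute the boundaries, using that $e_{v_0v_0}$ is non-regular and hence zero:
\begin{align*}
\partial e_{v_0v_1v_2}&=e_{v_1v_2}-e_{v_0v_2}+e_{v_0v_1},\\
\partial(e_{v_0v_1v_3}-e_{v_0v_2v_3})&=e_{v_1v_3}+e_{v_0v_1}-e_{v_2v_3}-e_{v_0v_2},\\
\partial e_{v_0v_1v_0}&=e_{v_1v_0}+e_{v_0v_1}.
\end{align*}
Pairing $\omega$ with each of these gives exactly $\langle\omega,a_1\star a_2\rangle-\langle\omega,a_3\rangle$, then $\langle\omega,a_1\star a_2\rangle-\langle\omega,a_3\star a_4\rangle$, and finally $\langle\omega,a_1\star a_2\rangle$, in the labelling of Figure~\ref{fig:contrible-dig}. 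Here $\langle\omega,\cdot\rangle$ is extended additively along concatenation, as $\int$ of a single form is additive by Proposition~\ref{prop:ii-conca-path}. So the vanishing of these three quantities matches conditions (1)--(3) of Lemma~\ref{lem:tri-square} one-for-one.

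The only step needing care is the claim that triangles, squares and double edges span $\Omega_2(G)$. I would cite the known description of $\Omega_2$ from the GLMY papers rather than reprove it. A weaker fact also suffices: it is enough that $d\omega=0$ forces the three identities, and conversely. The converse needs the spanning statement, which is exactly the cited structure theorem.

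I would also check that the permuted or cyclically relabelled triangles and squares in the definition reduce to the standard orientations. These are the same subdigraphs up to relabelling, so this should be routine. Finally, the loop version $\oint_x\omega$ follows verbatim, since Lemma~\ref{lem:tri-square} covers both cases.
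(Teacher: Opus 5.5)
Your proposal is correct and follows the paper's own proof. Both reduce to Lemma~\ref{lem:tri-square} and use the fact that $\Omega_2(G)$ is spanned by triangles, squares and double edges, so that $d\omega=0$ is equivalent to $(\omega,\partial\sigma)=0$ on these generators. Your explicit boundary computations and the matching with the labels of Figure~\ref{fig:contrible-dig} fill in the step that the paper only asserts.
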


\begin{proof}
Assume that $\omega$ is closed. Then for any $\sigma\in \Omega_2(G)$,
$0=(d\omega,\sigma)=(\omega,\partial \sigma),$
which implies the conditions in Lemma \ref{lem:tri-square}.

Conversely, it is shown in \cite{grigor2020path,grigor2014homotopy} that $\Omega_2(G)$ is generated by triangles, squares, and double edges. Hence the above conditions imply $(\omega,\partial \sigma)=0$ for all $\sigma\in \Omega_2(G)$, and therefore $d\omega=0$.
\end{proof}

Let $\omega_1\cdots\omega_r$ be an element of $T^r(G)\subset\Sh(G)$. The 1-form $\omega_1\cdots\omega_r$ is called \textit{triangular isosceles} if for any digraph embedding $\iota\colon G'\hookrightarrow G$ such that $\Img \iota$ is a standard triangle in $G$, and any tuple $(b_1,\cdots,b_r)$ with $b_i\in\{a_1,a_2\}$, the following equality holds for any $\sigma\in S_r$.
\begin{equation}\label{eq:isoscele}
   \langle \omega_1\cdots\omega_r,(b_{\sigma (1)},\cdots,b_{\sigma (r)})\rangle:=
   \Pi_{i=1}^r\langle \omega_i,b_{\sigma (i)}\rangle
   =
   \Pi_{i=1}^r\langle \omega_i,b_{i}\rangle
   =
   \langle \omega_1\cdots\omega_r,(b_{1},\cdots,b_{r})\rangle
\end{equation}
The element $\omega_1\cdots\omega_r$ is called \textit{square isosceles} if for any digraph embedding $\iota\colon G'\hookrightarrow G$ such that $\Img\iota$ is a standard square in $G$, and any tuple $(b_1,\cdots,b_r)$ with either $b_i\in\{a_1,a_2\}$ or $b_i\in\{a_3,a_4\}$, Equation (\ref{eq:isoscele}) holds for all $\sigma\in S_r$. Moreover, the element $\omega_1 \cdots \omega_r$ is called \textit{isosceles} if it is both triangular isosceles and square isosceles.

\begin{lemma}\label{lem:tri-squre-ind}
Assume that $G$ is a triangle or a square, and let $\omega_1, \ldots, \omega_r$ be closed 1-forms on $G$ such that $\omega_1 \cdots \omega_r \in \Sh(G)$ is isosceles. Then $
    \int_x \omega_1 \cdots \omega_r ~(\text{resp. } \oint_x \omega_1 \cdots \omega_r)$
is independent of $C_{\partial}$-homotopy.
\end{lemma}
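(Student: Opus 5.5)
The plan is to reduce to the elementary local moves and check each move by direct computation. By the combinatorial description of $C_\partial$-homotopy (the theorem of \cite{grigor2018fundamental} recalled in Section~\ref{subsec:pre-homotopy-dig}), it suffices to show that $\int_\alpha\omega_1\cdots\omega_r$ is unchanged when $\alpha$ is modified by one of the local transformations (i)--(v) inside the triangle or square $G$. Moves (iv) and (v) only insert or remove trivial arrows and backtracking pairs $a\star a^{-1}$. Hence they are already handled by Corollary~\ref{trivial} and Proposition~\ref{prop:iter-ele-equ}.

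\textbf{Localizing the other moves.} Moves (i)--(iii) replace a segment $\gamma$ of the path by another segment $\gamma'$ with the same endpoints:
\begin{itemize}
\item $a_1\star a_2\leftrightarrow a_3$ in the triangle;
\item $a_1\star a_2\leftrightarrow a_3\star a_4$ in the square;
\item in case (iii), a segment elementarily equivalent to $a_1\star a_2\star a_4^{-1}\star a_3^{-1}$ replaced by a trivial one.
\end{itemize}
Write $\alpha=\beta\star\gamma\star\beta'$ and $\alpha'=\beta\star\gamma'\star\beta'$. Applying Proposition~\ref{prop:ii-conca-path} twice gives
\[
\int_\alpha\omega_1\cdots\omega_r=\sum_{i\le j}\int_\beta\omega_1\cdots\omega_i\int_\gamma\omega_{i+1}\cdots\omega_j\int_{\beta'}\omega_{j+1}\cdots\omega_r .
\]
So it suffices to prove $\int_\gamma u=\int_{\gamma'}u$ for every consecutive subword $u=\omega_{i+1}\cdots\omega_j$.

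\textbf{The local identity.} Each subword of an isosceles word is again isosceles, and its factors are still closed. For the triangle:
\begin{itemize}
\item $\int_{a_3}u=\prod\langle\omega_k,a_3\rangle/\ell!$, with $\ell$ the length of $u$.
\item By closedness (Proposition~\ref{prop:closed-ind-path} and Lemma~\ref{lem:tri-square}), $\langle\omega_k,a_3\rangle=\langle\omega_k,a_1\rangle+\langle\omega_k,a_2\rangle$.
\item Expanding the product gives $\sum_{(b_1,\dots,b_\ell)\in\{a_1,a_2\}^\ell}\prod\langle\omega_k,b_k\rangle/\ell!$.
\item On the other hand, by example (4), $\int_{a_1\star a_2}u=\sum_{p}\prod_{k\le p}\langle\omega_k,a_1\rangle\prod_{k>p}\langle\omega_k,a_2\rangle/(p!(\ell-p)!)$.
\end{itemize}
The isosceles condition~\eqref{eq:isoscele} says the product depends only on the multiset of the $b_k$. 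Therefore each tuple with $p$ copies of $a_1$ equals the ordered one, and there are $\binom{\ell}{p}$ of them. This makes the two expressions agree.

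For the square, both $\int_{a_1\star a_2}u$ and $\int_{a_3\star a_4}u$ are, by the same computation, $\frac1{\ell!}\prod_k(\langle\omega_k,a_1\rangle+\langle\omega_k,a_2\rangle)$ and $\frac1{\ell!}\prod_k(\langle\omega_k,a_3\rangle+\langle\omega_k,a_4\rangle)$. These coincide by the closedness relation $\langle\omega,a_1\star a_2\rangle=\langle\omega,a_3\star a_4\rangle$. Move (iii) then follows by combining move (ii) with Proposition~\ref{prop:iter-ele-equ}.

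\textbf{Main obstacle.} The delicate point is the bookkeeping of orientations. In a triangle or square that is not standard, the move may traverse arrows backwards, so the relevant segments are products of $a_k^{\pm1}$. I would handle this by noting that isosceles-ness and closedness are stated on the embedded standard configuration. Proposition~\ref{prop:ii-inv-path} then transports identities along inverted segments: if $\int_\gamma u=\int_{\gamma'}u$ holds for all subwords $u$, then it also holds for $\gamma^{-1}$ and $\gamma'^{-1}$ with reversed words. Reversed words of isosceles words are still isosceles, so the argument applies to them as well.
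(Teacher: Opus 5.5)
\textbf{Gap: consecutive subwords of an isosceles word need not be isosceles.}

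Your local computation is exactly the paper's proof, and the paper's proof stops there. It only checks $\int_{a_3}=\int_{a_1\star a_2}$ (triangle) and $\int_{a_1\star a_2}=\int_{a_3\star a_4}$ (square) for the whole word $\omega_1\cdots\omega_r$.

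Your localization through Proposition~\ref{prop:ii-conca-path} is the right way to reach arbitrary paths. But it needs the local identity for every consecutive subword $u=\omega_{i+1}\cdots\omega_j$. Your claim ``each subword of an isosceles word is again isosceles'' is false. The condition on $\omega_1\cdots\omega_r$ holds vacuously as soon as some factor $\omega_k$ outside $u$ vanishes on both arrows of the relevant side.

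For a triangle this is harmless. A closed form vanishing on $a_1,a_2$ also vanishes on $a_3$, hence on all of $G$, and then every integral of the word is $0$. Otherwise, fix $b_k=c_k$ with $\langle\omega_k,c_k\rangle\neq 0$ for each $k$ outside $u$, apply the full-word condition, and divide.

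The orientation bookkeeping you flag is not the real difficulty. Rotated versions of moves (i)--(iii) follow from the basic replacements after inserting a backtrack $a\star a^{-1}$.

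\textbf{For a square, both your step and the statement fail.} Here a closed form can vanish on $a_1,a_2$ but not on $a_3,a_4$. This assumes ``isosceles'' is read, as the paper's proof uses it, separately on the sides $\{a_1,a_2\}$ and $\{a_3,a_4\}$.

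In the standard square take $\omega_1=e^{a_1}+e^{a_3}$, $\omega_2=e^{a_1}-e^{a_2}$, $\omega_3=e^{a_3}-e^{a_4}$.
\begin{itemize}
\item Each satisfies $\omega(a_1)+\omega(a_2)=\omega(a_3)+\omega(a_4)$, so each is closed.
\item $\omega_1\omega_2\omega_3$ is isosceles. Every product over an $\{a_1,a_2\}$-tuple contains a value of $\omega_3$, which is zero there. Every product over an $\{a_3,a_4\}$-tuple contains a value of $\omega_2$, also zero.
\item $\omega_1\omega_2$ is not isosceles, since $\omega_1(a_1)\omega_2(a_2)=-1\neq 0=\omega_1(a_2)\omega_2(a_1)$.
\item Accordingly $\int_{a_1\star a_2}\omega_1\omega_2=-\tfrac12\neq 0=\int_{a_3\star a_4}\omega_1\omega_2$.
\end{itemize}
Now let $\alpha=a_1\star a_2\star a_4^{-1}$, with vertex sequence $v_0v_1v_3v_2$, and $\alpha'=a_3$. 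These are $C_{\partial}$-homotopic by move (iii). Yet $\int_{\alpha}\omega_1\omega_2\omega_3=-\tfrac12$ while $\int_{\alpha'}\omega_1\omega_2\omega_3=0$. Likewise the null-homotopic loop $a_3^{-1}\star\alpha$ at $v_2$ has $\oint=-\tfrac12$.

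\textbf{The fix.} The missing hypothesis is that every consecutive subword $\omega_i\cdots\omega_j$ be isosceles. This is exactly condition (2) of Theorem~\ref{thm:closed-inds-path}. With it your argument goes through, and it is then more complete than the paper's proof.
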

\begin{proof}
If $G$ is a triangle, then $\omega_i(a_3)=\omega_i(a_1)+\omega_i(a_2)$. Since $\omega_1\cdots\omega_r$ is isosceles,
\begin{equation*}
    \frac{\Pi_{i=1}^r\omega_i(a_3)}{r!}
    =
    \frac{1}{r!}\sum_{k=0}^{r}\binom{r}{k}\omega_1(a_1)\cdots\omega_k(a_1)\omega_{k+1}(a_2)\cdots\omega_r(a_2)
    =
    \int_{a_1\star a_2}\omega_1\cdots\omega_r.
\end{equation*}

If $G$ is a square, then $\omega_i(a_1)+\omega_i(a_2)=\omega_i(a_3)+\omega_i(a_4)$. A similar computation gives
\begin{equation*}
	\begin{aligned}
	\frac{\Pi_{i=1}^r(\omega_i(a_1)+\omega_i(a_2))}{r!}
	&=
	\frac{1}{r!}\sum_{k=0}^r\binom{r}{k}\omega_1(a_1)\cdots\omega_k(a_1)\omega_{k+1}(a_2)\cdots\omega_r(a_2)
	=
	\int_{a_1\star a_2}\omega_1\cdots\omega_r,\\
	\frac{\Pi_{i=1}^r(\omega_i(a_3)+\omega_i(a_4))}{r!}
	&=
	\frac{1}{r!}\sum_{k=0}^r\binom{r}{k}\omega_1(a_3)\cdots\omega_k(a_3)\omega_{k+1}(a_4)\cdots\omega_r(a_4)
	=
	\int_{a_3\star a_4}\omega_1\cdots\omega_r.
	\end{aligned}
\end{equation*}
\end{proof}
\begin{theorem}\label{thm:closed-inds-path}
Let $G^{*}=(G_0,G_1,x)$ be a based digraph and let $\omega_1,\cdots,\omega_r$ be $1$-forms on $G$ such that
\begin{enumerate}
    \item $\omega_i$ is closed for any $i$,
    \item the element $\omega_i\omega_{i+1}\cdots\omega_j$ is isosceles for any $1\leq i\leq j\leq r$.
\end{enumerate}
Then the element $\int_x \omega_1\cdots\omega_r$ (resp.\ $\oint_x \omega_1\cdots\omega_r$) is independent of $C_{\partial}$-homotopy.
\end{theorem}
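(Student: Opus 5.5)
The plan is to reduce the statement to the local moves (i)--(v) of the combinatorial description of $C_\partial$-homotopy. By definition, $\alpha\simeq_C\alpha'$ means they are linked by a finite chain of such moves and their inverses. It therefore suffices to show that $\int_{\alpha}\omega_1\cdots\omega_r$ is unchanged by a single move. Each move replaces a short segment $\mu$ inside $\alpha=\gamma'\star\mu\star\gamma''$ by another segment $\mu'$ with the same endpoints, giving $\alpha'=\gamma'\star\mu'\star\gamma''$.

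Applying Proposition~\ref{prop:ii-conca-path} twice expands the integral over $\alpha$ as
\[
\sum_{0\le i\le j\le r}\int_{\gamma'}\omega_1\cdots\omega_i\int_{\mu}\omega_{i+1}\cdots\omega_j\int_{\gamma''}\omega_{j+1}\cdots\omega_r ,
\]
and the integral over $\alpha'$ has the same form with $\mu'$ in place of $\mu$. So it is enough to prove
\[
\int_\mu\omega_{i+1}\cdots\omega_j=\int_{\mu'}\omega_{i+1}\cdots\omega_j
\]
for every consecutive block $\omega_{i+1}\cdots\omega_j$. This is exactly why hypothesis (2) is imposed for all sub-words $\omega_i\cdots\omega_j$ and not only for the full word.

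The local equalities are then checked move by move.

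For moves (iv) and (v): the segment $v_0v_0v_1$ differs from $v_0v_1$ only by a trivial arrow, which is handled by Corollary~\ref{trivial}. The segment $v_0v_1v_0$ is $a\star a^{-1}$ (or trivial arrows), which is handled by Proposition~\ref{prop:iter-ele-equ}. No closedness is needed for these.

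For moves (i) and (ii): we restrict the forms to the triangle or square subdigraph, via the pullback along the embedding and Proposition~\ref{prop:dual-map}. The restricted forms remain closed and isosceles, so Lemma~\ref{lem:tri-squre-ind} applies. One point needs care: that lemma is stated for the oriented pieces $a_1\star a_2$ versus $a_3$ (resp.\ $a_3\star a_4$), whereas the moves allow any permutation of the vertices of the triangle or square. I would reduce the other orientations to this one using Proposition~\ref{prop:ii-inv-path} and Proposition~\ref{prop:iter-ele-equ}. For example, $v_0v_2\mapsto v_0v_1v_2$ can be rewritten by inverting or composing with backtracks, such as $a_1=a_3\star a_2^{-1}$ up to elementary equivalence. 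The isosceles condition is symmetric under inverting both sides, since the $(-1)^r$ signs and order reversal are compatible with the permutation invariance.

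For move (iii): the segment $v_0v_1v_3v_2$ of a square is, up to reversing one step, $a_1\star a_2\star a_4^{-1}$. After applying move (ii) to $v_0v_1v_3\mapsto v_0v_2v_3$ this becomes $a_3\star a_4\star a_4^{-1}$, which is elementarily equivalent to $a_3$. Hence (iii) follows from (ii) together with Proposition~\ref{prop:iter-ele-equ} and Proposition~\ref{prop:ii-conca-path}.

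The loop version follows immediately: loops are path maps from $x$ to $x$, and the moves preserve endpoints.

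The main obstacle is the orientation bookkeeping in moves (i)--(iii). Lemma~\ref{lem:tri-squre-ind} only literally compares $a_1\star a_2$ with $a_3$ (resp.\ $a_3\star a_4$), while a triangle can appear in a path in any of six vertex orders. My first attempt would be to express each variant as a concatenation of the standard comparison with backtracking pairs, and then invoke elementary-equivalence invariance. If that fails for some orientation, for instance $v_1v_0v_2\mapsto v_1v_2$, which compares $a_1^{-1}\star a_3$ with $a_2$, I would redo the binomial computation of Lemma~\ref{lem:tri-squre-ind} directly. That computation uses closedness, $\omega(a_2)=\omega(a_3)-\omega(a_1)$, and permutation invariance over tuples drawn from $\{a_1,a_3\}$. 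This set is not literally covered by the definition of isosceles, so the hypothesis may need to be read as symmetric in all arrows of the triangle.
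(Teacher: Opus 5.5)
Your overall strategy works. However, one step is wrong as stated, and one point you left open needs to be settled.

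\textbf{The error.} You claim that moves (iv) and (v) need no closedness. This is false. The moves act on \emph{vertex sequences}, and a vertex sequence does not determine the path map. When both $u\to v$ and $v\to u$ are arrows, a step $uv$ can be realised either by $u\to v$ or by $(v\to u)^{-1}$. Concretely, take a double edge $a_1\colon v_0\to v_1$, $a_2\colon v_1\to v_0$. The segment $v_0v_1v_0$ may then be the path map $a_1\star a_2$, which is not a backtrack $a\star a^{-1}$. Already $\int_{a_1\star a_2}\omega=\omega(a_1)+\omega(a_2)$ is nonzero for a general $1$-form, even though $v_0v_1v_0\mapsto v_0v_0$ is a legal move. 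So Proposition~\ref{prop:iter-ele-equ} does not cover this case.

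The repair uses hypothesis (1). By Proposition~\ref{prop:closed-ind-path} and Lemma~\ref{lem:tri-square}(3), a closed $\omega$ satisfies $\omega(a_2)=-\omega(a_1)=\langle\omega,a_1^{-1}\rangle$. Hence, for closed $\omega_i$, every value $\langle\omega_i,\alpha_t\rangle$ depends only on $(\alpha(t-1),\alpha(t))$. Two consequences follow:
\begin{itemize}
\item $\int_\alpha\omega_1\cdots\omega_r$ depends only on the vertex sequence of $\alpha$. You need this anyway, since the combinatorial description only sees vertex sequences.
\item $a_1\star a_2$ has, term by term, the same integrals as $a_1\star a_1^{-1}$, and these vanish.
\end{itemize}

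\textbf{The orientation issue is not an obstacle.} Your first attempt succeeds, and the isosceles hypothesis does not need strengthening. Let $W$ be the set of consecutive sub-words of $\omega_1\cdots\omega_r$, including the empty word. Write $\mu\equiv_W\mu'$ if $\int_\mu w=\int_{\mu'}w$ for all $w\in W$.
\begin{itemize}
\item Because $W$ is closed under passing to consecutive sub-words, Proposition~\ref{prop:ii-conca-path} makes $\equiv_W$ compatible with concatenation on both sides.
\item $\equiv_W$ is coarser than elementary equivalence.
\item Lemma~\ref{lem:tri-squre-ind}, applied to each $w\in W$, gives $a_1\star a_2\equiv_W a_3$ for the triangle (resp.\ $a_1\star a_2\equiv_W a_3\star a_4$ for the square).
\end{itemize}
From these you get, for the triangle, $a_1^{-1}\star a_3\equiv_W a_1^{-1}\star a_1\star a_2\equiv_W a_2$. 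For the square, $a_1^{-1}\star a_3\equiv_W a_1^{-1}\star a_3\star a_4\star a_4^{-1}\equiv_W a_1^{-1}\star a_1\star a_2\star a_4^{-1}\equiv_W a_2\star a_4^{-1}$. All remaining orientations and inverses go the same way.

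Your sentence ``$a_1=a_3\star a_2^{-1}$ up to elementary equivalence'' is false as written, since these two paths are not elementarily equivalent. The correct chain is $a_3\star a_2^{-1}\equiv_W a_1\star a_2\star a_2^{-1}\equiv_W a_1$.

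\textbf{Comparison with the paper.} With these repairs your argument is complete, and it takes a different route from the paper's. The paper does not use the moves (i)--(v). It takes a one-step $C_\partial$-homotopy $F\colon C_h\to G$ and sweeps a sequence of paths $\psi_0,\dots,\psi_n$ across the cylinder, one triangle or square cell at a time. At each cell it applies the concatenation formula and Lemma~\ref{lem:tri-squre-ind}.

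Your route depends on the combinatorial description of $C_\partial$-homotopy, but it only ever meets genuine triangles and squares of $G$. The paper's route avoids that theorem, but it tacitly has to handle two things it leaves implicit: cells whose image under $F$ is degenerate, and the same non-standard orientations you worried about (for instance when an arrow of $I_n$ points backwards, or when $C_h^-$ is used).
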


\begin{proof}
It suffices to treat $\int_x \omega_1\cdots\omega_r$. Let $y$ be a vertex of $G$, and let $\alpha:I_n\to G$, $\alpha':I_m\to G$ ($n\geq m$) be path maps from $x$ to $y$.

By definition of $C_{\partial}$-homotopy, it is enough to consider the case where $\alpha$ and $\alpha'$ are one-step $C_{\partial}$-homotopic. Hence there exists a shrinking map $h:I_n\to I_m$ and a digraph map $F:C_h\to G$ such that $F|_{I_n}=\alpha$ and $F|_{I_m}=\alpha'$.

We construct a sequence of path maps $\psi_i$ on $C_h$. Set
$\psi_0=\alpha\star(\alpha(n)\to \alpha'(m)).$ Since $h(n)=m$ and $\alpha(n)=\alpha'(m)=y$, we have $F_*\psi_0=\alpha$.

Inductively, $\psi_{i}$ is obtained from $\psi_{i-1}$. If $[\alpha(n-i)\alpha(n-i+1)\alpha'(h(n-i))]$ or $[\alpha(n-i+1)\alpha(n-i)\alpha'(h(n-i))]$ forms a triangle, one example is shown as follows:
\begin{equation*}\label{eq:tri-mapping-cone}
    \xymatrix{
    \alpha(0)\ar[r]\ar[d]^h & \cdots\ar[r] &\shortstack{$\alpha(n-i)$\\=$\psi_{i-1}(n-i)$\\=$\psi_i(n-i)$ }\ar[r]\ar[d]^h & \shortstack{$\alpha(n-i+1)$\\=$\psi_{i-1}(n-i+1)$}\ar[r]\ar[dl]^h & \cdots \ar[r] & \alpha(n)\ar[dl]^h \\
    \alpha'(0)\ar[r] & \cdots\ar[r] & \shortstack{$\alpha'(h(n-i))$\\=$\psi_{i-1}(n-i+2)$\\=$\psi_i(n-i+1)$} \ar[r] & \cdots \ar[r] &\alpha'(m) &
    }
\end{equation*}
then $\psi_i$ is the concatenation of $\alpha|_{[0,n-i]}$, arrow $(\alpha(n-i)\to \alpha'(h(n-i)))$ and $\psi_{i-1}|_{[n-i+2:]}$.

If $[\alpha(n-i)\alpha(n-i+1)\alpha'(h(n-i))\alpha'(h(n-i+1))]$ or $[\alpha(n-i+1)\alpha(n-i)\alpha'(h(n-i+1))\alpha'(h(n-i))]$ is a square (see one example below),
\begin{equation}\label{eq:sqr-mapping-cone}
\xymatrix{
\alpha(0)\ar[r]\ar[d]^h
& \cdots \ar[r]
& \shortstack{$\alpha(n-i)$\\$=\psi_{i-1}(n-i)$\\$=\psi_i(n-i)$}
  \ar[r]\ar[d]^h
& \shortstack{$\alpha(n-i+1)$\\$=\psi_{i-1}(n-i+1)$}
  \ar[r]\ar[d]^h
& \cdots \ar[r]
& \alpha(n)\ar[d]^h \\
\alpha'(0)\ar[r]
& \cdots \ar[r]
& \shortstack{$\alpha'(h(n-i))$\\$=\psi_i(n-i+1)$}
  \ar[r]
& \shortstack{$\alpha'(h(n-i+1))$\\$=\psi_{i-1}(n-i+2)$\\$=\psi_i(n-i+2)$}
  \ar[r]
& \cdots \ar[r]
& \alpha'(m)
}
\end{equation}
then $\psi_{i}$ is the concatenation of $\alpha|_{[0,n-i]}$, the 2-step path maps $(\alpha(n-i)\to\alpha'(h(n-i))\to\alpha'(h(n-i+1)))$ and $\psi_{i-1}|_{[n-i+2:]}$.

{The path maps $\psi_i$ satisfy the following properties:
        \begin{enumerate}
            \item in the case $i=n$, $\psi_n$ is the concatenation of the arrow ${(\alpha(0)\to \alpha'(0))}$ and $\alpha'$
            \item for any $i$, $\psi_i$ and $\psi_{i+1}$ are 1-step $C_{\partial}$-homotopic,
            \item for any $i$, $\psi_i|_{[0,n-i]}=\alpha|_{[0,n-i]}=\psi_{i-1}|_{[0,n-i]}$ is the cut of $\alpha$ and $\psi_i|_{[n-i+2:]}=\psi_{i-1}|_{[n-i+2:]}$ is the cut of $\alpha'$
        \end{enumerate}
Then from Proposition \ref{thm:paths-equ-int} and Lemma \ref{lem:tri-squre-ind} there are $\int_{\psi_i}\omega_1\cdots\omega_r=\int_{\psi_{i+1}}\omega_1\cdots\omega_r$ for any $i=0,\cdots,n-1$.
Hence $\int_{\alpha}\omega_1\cdots\omega_r=\int_{\alpha'}\omega_1\cdots\omega_r$.}
\end{proof}

\subsection{Hopf Algebra Structure}
It is clear that the unit $\eta_{\Qcal_x(G)}$, the shuffle multiplication $\mu_{\Qcal_x(G)}$, the counit $\epsilon_{\Qcal_x(G)}$, and the antipode $j_{\Qcal_x(G)}$ restrict naturally to $\pi^1_x(G)$. Moreover, the restriction of the bilinear pairing \eqref{eq:bilinear-form},
\begin{equation*}
(-,-) \colon  \pi^1_x(G) \times \Kbb \pi_1(G,x) \to \Kbb, \quad (\oint_x u, [\alpha]) \mapsto \oint_{\alpha} u,
\end{equation*}
remains non-degenerate.

With the following lemma, we obtain Theorem \ref{thm:pi1-hopf}.

\begin{lemma}[Lemma 3.7, \cite{chen1971algebras}]\label{subal}
Let $\rho\colon H' \to H_1'$ be a morphism of Hopf algebras. If $H \times H' \to \Kbb$ is a left non-degenerate pairing of Hopf algebras, then
$H_1 = (\ker \rho)^{\perp}$ is a Hopf subalgebra of $H$.
\end{lemma}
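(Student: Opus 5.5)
The plan is to argue in the framework of Hopf algebras with pairings, as Chen does. Let $H$ be a Hopf algebra and $H'$ a Hopf algebra with a left non-degenerate pairing $H\times H'\to\Kbb$. Left non-degenerate means that if $(h,h')=0$ for all $h'$, then $h=0$. The pairing is assumed compatible in the sense of Lemma 3.5 of \cite{chen1971algebras}: $\mu_H$ is dual to $\Delta_{H'}$, $\Delta_H$ is dual to $\mu_{H'}$, the units and counits are dual, and the antipodes are adjoint. Write $K=\ker\rho$. Since $\rho$ is a morphism of Hopf algebras, $K$ is a Hopf ideal of $H'$:
\begin{itemize}
\item $K$ is a two-sided ideal;
\item $\Delta_{H'}(K)\subset K\otimes H'+H'\otimes K$;
\item $\epsilon_{H'}(K)=0$;
\item $j_{H'}(K)\subset K$.
\end{itemize}
The first three hold because $\rho$ is an algebra and coalgebra map. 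For the coideal property, note that $(\rho\otimes\rho)\Delta_{H'}(k)=\Delta\rho(k)=0$, and $\ker(\rho\otimes\rho)=K\otimes H'+H'\otimes K$ over a field. The last holds because bialgebra maps commute with antipodes. I then set $H_1=K^\perp=\{h\in H:(h,k)=0\ \forall k\in K\}$ and check the four closure properties of a Hopf subalgebra one by one.

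\textbf{Unit and subalgebra.} First, $1_H\in H_1$, since $(\eta_H c,k)=c\,\epsilon_{H'}(k)=0$. Next take $h_1,h_2\in H_1$ and $k\in K$. Then
\[
(h_1h_2,k)=(h_1\otimes h_2,\Delta_{H'}k),
\]
and this vanishes because $\Delta_{H'}k\in K\otimes H'+H'\otimes K$. Hence $h_1h_2\in H_1$.

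\textbf{Antipode.} For $h\in H_1$ and $k\in K$ we have $(j_Hh,k)=(h,j_{H'}k)=0$, since $j_{H'}k\in K$. So $j_H(H_1)\subset H_1$.

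\textbf{Subcoalgebra.} This is the main step, and I expect it to be the main obstacle: I must show $\Delta_H(H_1)\subset H_1\otimes H_1$, and this is where left non-degeneracy enters. Fix $h\in H_1$. For any $k\in K$ and $h'\in H'$, both $kh'$ and $h'k$ lie in $K$, so
\[
(\Delta_H h,k\otimes h')=(h,kh')=0,\qquad (\Delta_H h,h'\otimes k)=(h,h'k)=0.
\]
Thus $\Delta_Hh$ annihilates $K\otimes H'+H'\otimes K$. Now write $\Delta_Hh=\sum_i u_i\otimes v_i$ with the $v_i$ linearly independent. By left non-degeneracy, the functionals $(-,h')$ for $h'\in H'$ separate $H$, so after choosing the $v_i$ suitably I can pick $h'_j$ with $(v_i,h'_j)=\delta_{ij}$. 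Pairing $\Delta_Hh$ against $k\otimes h'_j$ then gives $(u_j,k)=0$ for all $k\in K$, so $u_j\in H_1$. A symmetric argument, choosing the $u_i$ independent, puts the second factors in $H_1$. Together these give $\Delta_Hh\in H_1\otimes H_1$. In infinite dimensions I would make this rigorous by passing to a finite-dimensional span of the relevant tensor factors, where the restricted functionals are still separating. The counit condition needs no separate check: $\epsilon_H$ simply restricts to $H_1$.

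Finally, the compatibility identities between $\mu,\eta,\Delta,\epsilon,j$ on $H_1$ are inherited from $H$. Therefore $H_1$ is a Hopf subalgebra, and it is involutive whenever $H$ is.
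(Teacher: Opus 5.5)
Your argument is correct. The paper gives no proof of this lemma; it simply quotes Chen's Lemma 3.7. What you wrote is the standard argument behind that lemma. First, $\ker\rho$ is a Hopf ideal. Second, its annihilator under a Hopf pairing is closed under $\mu_H$, $\eta_H$ and $j_H$ with no hypothesis on the pairing at all. Left non-degeneracy is needed only for $\Delta_H(H_1)\subset H_1\otimes H_1$, exactly as you identified.

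Two points in that last step can be stated more cleanly.

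\emph{Choice of the $v_i$.} No special choice of the $v_i$ is needed. Left non-degeneracy says precisely that $h\mapsto (h,-)$ is an injective map from $H$ into the dual of $H'$. So any linearly independent $v_1,\dots,v_n$ give linearly independent functionals on $H'$. Finitely many independent functionals on any vector space admit a dual family $h'_1,\dots,h'_n$ with $(v_i,h'_j)=\delta_{ij}$. Since $\Delta_H h$ is a finite sum, there is no infinite-dimensional issue and nothing needs to be restricted.

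\emph{Combining the two conclusions.} To combine the two one-sided conclusions $\Delta_Hh\in H_1\otimes H$ and $\Delta_Hh\in H\otimes H_1$, you need $(H_1\otimes H)\cap(H\otimes H_1)=H_1\otimes H_1$. This is true over a field, by expanding in a basis of $H$ that extends a basis of $H_1$. More directly, write $\Delta_Hh=\sum_i u_i\otimes v_i$ with the minimal number of terms. Then both families are linearly independent, and pairing against $k\otimes h'_j$ and against $h''_j\otimes k$ gives $u_i\in H_1$ and $v_i\in H_1$ at once.
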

\begin{theorem}\label{thm:pi1-hopf}
{The algebra $\pi^1_x(G)$ carries a Hopf algebra structure with antipode, whose multiplication is given by the shuffle product, and its unit, comultiplication, counit, and antipode are obtained by restricting $\eta_{\Qcal_x(G)}$, $\Delta_{\Qcal_x(G)}$, $\epsilon_{\Qcal_x(G)}$, and $j_{\Qcal_x(G)}$, respectively.}
	\end{theorem}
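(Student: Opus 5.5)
The plan is to realize $\pi^1_x(G)$ as an annihilator $(\ker\rho)^{\perp}$ and then invoke Lemma~\ref{subal}. The pairing used is the non-degenerate one $(-,-)\colon \Qcal_x(G)\times \Kbb\Bar{L}_x(G)\to\Kbb$. By Proposition~\ref{prop:Q-hopf} and the Lemma 3.5 of \cite{chen1971algebras} recalled above, this is a pairing of Hopf algebras: products are dual to the diagonal coproduct $g\mapsto g\otimes g$ through the shuffle identity (Proposition~\ref{prop:ii-mult}); the deconcatenation coproduct is dual to concatenation of loops by Proposition~\ref{prop:ii-conca-path}; the antipodes correspond by Proposition~\ref{prop:ii-inv-path}; and unit and counit match because $\epsilon_{\Qcal_x(G)}$ evaluates on $r_x$.

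\textbf{Step 1: the morphism $\rho$.} I would use the quotient map $\rho\colon \Kbb\Bar{L}_x(G)\to \Kbb\pi_1(G,x)$, $[\alpha]\mapsto[\alpha]_C$. This requires checking that elementary equivalence implies $C_\partial$-homotopy. Removing a trivial arrow is move (v) of the combinatorial description of $C_\partial$-homotopy. Cancelling $\gamma\star\gamma^{-1}$ follows by induction on the length of $\gamma$ using move (iv) and then (v). Hence $\rho$ is induced by a surjective group homomorphism $\Bar{L}_x(G)\to\pi_1(G,x)$, and it is therefore a morphism of group Hopf algebras. Its kernel is spanned by the differences $[\alpha]-[\alpha']$ with $\alpha\simeq_C\alpha'$.

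\textbf{Step 2: identify the annihilator.} By definition, an element $\oint_x u$ is independent of $C_\partial$-homotopy exactly when $(\oint_x u,[\alpha]-[\alpha'])=0$ for all such pairs. So the set of homotopy-independent elements of $\Qcal_x(G)$ equals $(\ker\rho)^\perp$, and this set is a linear subspace. This is where I expect the main subtlety. The paper defines $\pi^1_x(G)$ as the subalgebra generated by the homotopy-independent elements $\oint_x\omega_1\cdots\omega_r$, so one must check that generating adds nothing new. This holds because $(\ker\rho)^\perp$ is already closed under shuffle products: if $\oint_x u$ and $\oint_x v$ are homotopy independent, then $\oint_\alpha u\circ v=\oint_\alpha u\oint_\alpha v$ is homotopy independent as well. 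Linear combinations of general elements $\oint_x u$ with $u\in\Sh(G)$ are covered in the same way. Hence $\pi^1_x(G)=(\ker\rho)^\perp$.

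\textbf{Step 3: conclude.} The pairing is left non-degenerate by Theorem~\ref{thm:paths-equ-int}: an element of $\Qcal_x(G)$ pairing to zero with every loop is zero as a functional on $L_x(G)$. Lemma~\ref{subal} therefore applies and shows that $\pi^1_x(G)$ is a Hopf subalgebra of $\Qcal_x(G)$, with all structure maps obtained by restriction. As a direct check of the one nontrivial piece, consider $\Delta(\oint_x u)=\sum \oint_x u'\otimes\oint_x u''$. It pairs with $[\alpha]\otimes[\beta]$ to give $\oint_{\alpha\star\beta}u$, which depends only on the classes $[\alpha]_C,[\beta]_C$. Choosing bases dual through the non-degenerate pairing then shows that $\Delta(\oint_x u)$ lies in $\pi^1_x(G)\otimes\pi^1_x(G)$. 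The restricted multiplication is the shuffle product, which gives the statement.
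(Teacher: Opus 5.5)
Your proposal is correct and follows the same route as the paper. Both identify $\pi^1_x(G)$ with $(\ker\rho)^{\perp}$ for the canonical Hopf map $\rho\colon \Kbb\bar{L}_x(G)\to\Kbb\pi_1(G,x)$, and then apply Chen's Lemma~\ref{subal} to the pairing with $\Qcal_x(G)$.

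Your Steps 1--3 make explicit what the paper's short proof asserts without argument. These are: elementary equivalence implies $C_\partial$-homotopy, so $\rho$ is well defined; the generated subalgebra already equals the annihilator; and the pairing has the left non-degeneracy the lemma needs.
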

\begin{proof}[Proof of Theorem \ref{thm:pi1-hopf}] Let {$\rho$ be the canonical morphism of Hopf algebras $\rho \colon  \Bar{L}_x(G)\to \Kbb\pi_1(G)$ and $N$ be its kernel}. Then the kernel $N$ is spanned by elements of the type $\alpha-\beta$ such that the loops $\alpha$ and $\beta$ are $C_{\partial}$-homotopic. {Since the bilinear pairing $\pi^1_{x}(G)\times \Kbb\pi_{1}(G,x)\to \Kbb$ is {non-degenerate}  and $\pi^1_x(G)=N^{\bot }$}, it follows from Lemma \ref{subal} that $\pi^1_x(G)$ is a Hopf subalgebra of $\Qcal_x(G)$.
\end{proof}

\subsection{Change of the Base Point}
Let $G=(G_0,G_1)$ be a connected digraph, that is, for any two vertices $x,y \in G_0$, there exists a path map $\gamma \colon I_n \to G$ such that $\gamma(0)=x$ and $\gamma(n)=y$. Then there is a well-defined map between the loop spaces $\bar{L}_x(G)$ and $\bar{L}_y(G)$ given by
\begin{equation*}
\begin{aligned}
\gamma_* \colon \bar{L}_x(G) \to \bar{L}_y(G), \quad
\alpha \mapsto \gamma^{-1} \star \alpha \star \gamma .
\end{aligned}
\end{equation*}
A direct computation verifies that
\begin{equation*}
\begin{aligned}
\Delta_{\bar{L}_y(G)}(\gamma_* \alpha)
&= \gamma^{-1} \star \alpha \star \gamma \otimes \gamma^{-1} \star \alpha \star \gamma
= (\gamma_* \otimes \gamma_*) \Delta_{\bar{L}_x(G)}(\alpha), \\
\epsilon_{\bar{L}_y(G)}(\gamma_* \alpha)
&= 1
= \epsilon_{\bar{L}_x(G)}(\alpha), \\
j_{\bar{L}_y(G)}(\gamma_* \alpha)
&= \gamma^{-1} \star \alpha^{-1} \star \gamma
= \gamma_* \bigl( j_{\bar{L}_x(G)}(\alpha) \bigr).
\end{aligned}
\end{equation*}
Thus $\gamma_*$ defines an isomorphism of Hopf $\mathbb{K}$-algebras.

Define a map $\gamma^{*} \colon \Qcal_y(G) \to \Qcal_x(G)$ to be the adjoint of $\gamma_{*}$, namely,
\begin{equation*}
(\gamma^{*}\oint_y\omega_1\cdots\omega_r,\alpha)
=
(\oint_y\omega_1\cdots\omega_r,\gamma_{*}\alpha).
\end{equation*}
This map is well-defined since the pairing $(-,-)$ is non-degenerate. Since $\gamma\gamma^{-1}\sim e$, the inverse of $\gamma^{*}$ is exactly $(\gamma^{-1})^*$, this implies that $\gamma^{*}$ is bijective.
Moreover since the image of $\pi_1(G,x)$ under $\gamma_{*}$ lies exactly in $\pi_1(G,y)$, the image of $\pi^1_y(G)$ by $\gamma^{*}$ lies in the subspace $\pi^1_x(G)$. In other words, $\gamma^{*}$ naturally induces a {bijection}
\begin{equation*}
	\gamma^{*}\colon \pi^1_y(G)\to \pi^1_x(G).
\end{equation*}
{The above discussion leads to the following lemma.}
\begin{lemma}
Let $G$ be a connected digraph, and $\gamma \colon I_n \to G$ a path map such that $\gamma(0)=x$ and $\gamma(n)=y$. Then there exists an isomorphism of Hopf $\mathbb{K}$-algebras
\begin{equation*}
\gamma^* \colon \pi^1_y(G) \to \pi^1_x(G).
\end{equation*}
\end{lemma}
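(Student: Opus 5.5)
The plan is to deduce the Hopf isomorphism $\gamma^*\colon \pi^1_y(G)\to\pi^1_x(G)$ from the Hopf isomorphism $\gamma_*\colon \bar L_x(G)\to\bar L_y(G)$ by duality. The tool is the non-degenerate pairing of Theorem~\ref{thm:pi1-hopf} together with the compatibilities used in Proposition~\ref{prop:Q-hopf}.

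\textbf{Step 1: $\gamma_*$ is well defined and an algebra map.} First I check that $\gamma_*$ respects elementary equivalence. Conjugation $\alpha\mapsto\gamma^{-1}\star\alpha\star\gamma$ sends a one-step elementary move in $\alpha$ to a one-step elementary move in $\gamma^{-1}\star\alpha\star\gamma$, so it descends to classes. Next, $\gamma_*$ is multiplicative, because
\[
\gamma^{-1}\star\alpha\star\gamma\star\gamma^{-1}\star\beta\star\gamma\sim\gamma^{-1}\star\alpha\star\beta\star\gamma
\]
by cancelling $\gamma\star\gamma^{-1}$. It is also unital, since $\gamma^{-1}\star e_x\star\gamma\sim e_y$. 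Together with the coalgebra and antipode identities already computed in the text, this makes $\gamma_*$ a Hopf isomorphism with inverse $(\gamma^{-1})_*$. $C_\partial$-homotopy is preserved by concatenation with fixed paths, so $\gamma_*$ also maps the kernel $N_x$ of $\bar L_x(G)\to\Kbb\pi_1(G,x)$ onto $N_y$.

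\textbf{Step 2: transfer to the algebras.} I define $\gamma^*$ as the adjoint of $\gamma_*$, and need it to land in $\Qcal_x(G)$. By Proposition~\ref{prop:ii-conca-path} applied twice,
\[
\oint_{\gamma^{-1}\star\alpha\star\gamma}\omega_1\cdots\omega_r=\sum_{i\le j}\int_{\gamma^{-1}}\omega_1\cdots\omega_i\oint_\alpha\omega_{i+1}\cdots\omega_j\int_\gamma\omega_{j+1}\cdots\omega_r .
\]
Here the factors over $\gamma^{\pm1}$ are constants independent of $\alpha$. Hence $\gamma^*\oint_y u$ is an explicit finite linear combination of elements $\oint_x u'$, which shows that $\gamma^*$ maps $\Qcal_y(G)$ into $\Qcal_x(G)$. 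Non-degeneracy of the pairing then gives uniqueness and linearity.

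\textbf{Step 3: Hopf compatibilities by duality.} Each structure map is checked by pairing against loops and invoking non-degeneracy. For multiplicativity, $(\gamma^*(u\circ v),\alpha)=(u\circ v,\gamma_*\alpha)=(u,\gamma_*\alpha)(v,\gamma_*\alpha)$, where the last equality is Proposition~\ref{prop:ii-mult}, read as $\Delta(\alpha)=\alpha\otimes\alpha$. This equals $(\gamma^*u\circ\gamma^*v,\alpha)$. The comultiplication follows from the multiplicativity of $\gamma_*$:
\[
(\Delta\gamma^*u,\alpha\otimes\beta)=(u,\gamma_*(\alpha\star\beta))=(u,\gamma_*\alpha\star\gamma_*\beta)=((\gamma^*\otimes\gamma^*)\Delta u,\alpha\otimes\beta).
\]
The unit and counit follow similarly from $\gamma_*e_x=e_y$. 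The antipode is then preserved automatically, since any bialgebra morphism commutes with antipodes. For left non-degeneracy on tensor products, I use that it holds on the image of $\Sh(G)\otimes\Sh(G)$, as in Chen's Lemma~3.5.

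\textbf{Step 4: restriction to $\pi^1$.} Since $\pi^1_y(G)=N_y^{\perp}$ and $\gamma_*N_x=N_y$, we have $\gamma^*(N_y^\perp)\subset N_x^\perp=\pi^1_x(G)$. The symmetric argument with $\gamma^{-1}$ gives the reverse inclusion. Because $(\gamma^{-1})^*\gamma^*=(\gamma\star\gamma^{-1})^*=\mathrm{id}$ by adjointness and $\gamma\star\gamma^{-1}\sim e_x$, the restriction is a bijective Hopf morphism, hence a Hopf isomorphism.

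\textbf{Main obstacle.} The delicate point is Step 2 together with the non-degeneracy of the tensor pairing used in Step 3. I would handle it with the explicit concatenation formula above, which rests on Theorem~\ref{thm:paths-equ-int}, rather than with an abstract dual argument.
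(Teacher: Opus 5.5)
Your proposal is correct and follows the paper's route: you define $\gamma^*$ as the adjoint of the conjugation isomorphism $\gamma_*\colon \bar{L}_x(G)\to\bar{L}_y(G)$, transport the Hopf structure through the non-degenerate pairing, take $(\gamma^{-1})^*$ as the inverse, and restrict to $\pi^1$ because $\gamma_*$ preserves $C_\partial$-homotopy. Your extra details fill in points the paper's terse argument leaves implicit: the explicit concatenation formula shows the adjoint actually lands in $\Qcal_x(G)$ (non-degeneracy alone only gives uniqueness), and the multiplicativity check on $\gamma_*$ is exactly what the coalgebra compatibility of $\gamma^*$ requires.
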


\begin{proof}
It suffices to show that $\gamma^*$ is compatible with both the algebra and coalgebra structures. Since the pairing is nondegenerate, its adjoint $\gamma^{*}$ is both an algebra and a coalgebra morphism, hence a morphism of Hopf $\Kbb$-algebras.
\end{proof}

Let $G=(G_0,G_1)$ and $H=(H_0,H_1)$ be two digraphs, let $x$ be a fixed vertex of $G$, and let $f,g \colon G \to H$ be two digraph maps such that $f\backsimeq g$. Then there exists a line digraph $I_n$ and a digraph map $F \colon G \Box I_n \to H$ such that $F|_{G \Box \{0\}}=f$ and $F|_{G \Box \{n\}}=g$. Let $\gamma$ be the path map from $f(x)$ to $g(x)$ defined by $\gamma(t)=F(x,t)$. We have the following lemma.

\begin{lemma}\label{lem:pi1-homotopy-commutes}
Let $f,g$ and $\gamma$ be defined as above. Then the following diagram commutes:
\begin{equation*}
\xymatrix{
\pi^1_{f(x)}(H) \ar[r]^{f^{*}} & \pi^1_x(G) \\
\pi^1_{g(x)}(H) \ar[u]^{\gamma^{*}} \ar[ur]^{g^{*}}.
}
\end{equation*}
\end{lemma}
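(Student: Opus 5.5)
We need to show $f^{*} = g^{*}\circ\gamma^{*}$ on $\pi^1_{f(x)}(H)$; the diagram has $\gamma^*\colon \pi^1_{g(x)}(H)\to\pi^1_{f(x)}(H)$ with $\gamma$ from $f(x)$ to $g(x)$, so the intended identity is $f^*\circ\gamma^{*}=g^{*}$ on $\pi^1_{g(x)}(H)$. Since the pairing $\pi^1_x(G)\times\Kbb\pi_1(G,x)\to\Kbb$ is non-degenerate, it suffices to check that both sides pair identically with every loop $\alpha$ at $x$. Take $u=\oint_{g(x)}\omega_1\cdots\omega_r\in\pi^1_{g(x)}(H)$. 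By Proposition~\ref{prop:dual-map} and the definition of $\gamma^*$ as the adjoint of $\gamma_*$,
\[
(f^*\gamma^* u,\alpha)=(\gamma^*u,f\circ\alpha)=\oint_{\gamma^{-1}\star(f\circ\alpha)\star\gamma}\omega_1\cdots\omega_r,
\qquad
(g^*u,\alpha)=\oint_{g\circ\alpha}\omega_1\cdots\omega_r .
\]
Because $u$ lies in $\pi^1_{g(x)}(H)$, it is invariant under $C_\partial$-homotopy of loops at $g(x)$. The problem therefore reduces to a purely geometric statement: the loops $\gamma^{-1}\star(f\circ\alpha)\star\gamma$ and $g\circ\alpha$ at $g(x)$ are $C_\partial$-homotopic.

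To prove this, I would use the homotopy $F\colon G\Box I_n\to H$. Let $\alpha\colon I_m\to G$ be the loop and consider the composite $\Phi=F\circ(\alpha\times\Id)\colon I_m\Box I_n\to H$, a map from a grid. Here $\Phi|_{I_m\times\{0\}}=f\circ\alpha$, $\Phi|_{I_m\times\{n\}}=g\circ\alpha$, and both vertical sides equal $\gamma$, since $\alpha(0)=\alpha(m)=x$. The key step is to show that the boundary paths of a grid map are $C_\partial$-homotopic. Concretely, for each $k$ I would compare $\gamma|_{[0,k]}^{-1}$-type paths row by row. Equivalently, I would prove that $\Phi$ restricted to row $k$, conjugated by the vertical segments, is one-step $C_\partial$-homotopic to row $k+1$ conjugated similarly. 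Each unit cell of $I_m\Box I_n$ maps to a square, a degenerate square (a triangle or edge), or a point in $H$. Sweeping the path across one cell at a time therefore realizes a local transformation of types (i)--(v) from the combinatorial description theorem. I would organize this as: first show that $(\text{vertical arrow})^{-1}\star(\text{row}_k)\star(\text{vertical arrow})\simeq_C \text{row}_{k+1}$ between consecutive rows, treating $\text{row}_k$ as $\Phi(-,k)$. Then compose over $k$ and cancel the backtracking $\gamma$-segments, using move (iv) together with elementary equivalence, which preserves integrals by Proposition~\ref{prop:iter-ele-equ} and Corollary~\ref{trivial}.

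The main obstacle is this one-row step. Arrows in $I_n$ and $I_m$ may point either way, and cells may degenerate, so one must check that the image of each cell's boundary yields a valid square/triangle move or a trivial collapse. I would handle it by the standard fact, as in \cite{grigor2014homotopy}, that $C_\partial$-homotopy classes of loops are preserved under this conjugation, which is what makes $\pi_1$ functorial on homotopies. Equivalently, one can note that the map $t\mapsto\Phi(\cdot,t)$ gives a one-step $C_\partial$-homotopy via the cylinder with shrinking map the identity, after padding with $\gamma$. Granting this, the two pairings agree for all $\alpha$, and non-degeneracy gives $f^*\gamma^*=g^*$, so the diagram commutes.
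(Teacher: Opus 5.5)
Your proposal is correct and follows the paper's proof. Both arguments use non-degeneracy of the pairing and the adjointness of $f^*,\gamma^*$ with $f_*,\gamma_*$ to reduce the claim to the $\pi_1$-level identity $\gamma_*\circ f_*=g_*$, i.e.\ $\gamma^{-1}\star(f\circ\alpha)\star\gamma\simeq_C g\circ\alpha$. The paper simply asserts this identity, whereas you also sketch why it holds via the grid map $F\circ(\alpha\times\Id)$. That sketch is sound: the padded identity-cylinder step works row by row, provided you use the inverse cylinder $C^{-}_{\Id}$ when the corresponding arrow of $I_n$ points backwards and use that $C_\partial$-homotopy is compatible with conjugation by a path.
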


\begin{proof}
Since the pairing $(-,-)\colon \pi_x^1(G)\times \pi_1(G,x)\to \Kbb $
is non-degenerate, the Lemma follows from the fact that the following diagram commutes.
	\begin{equation*}
\xymatrix{\pi_1(H,f(x))\ar[d]^{\gamma_{*}}&\pi_1(G,x)\ar[l]^{f_{*}}\ar[dl]^{g_{*}}\\
		\pi_1(H,g(x))
		}
	\end{equation*}
\end{proof}
The following theorem states that the algebra $\pi^1$ is a homotopy invariant.
\begin{theorem}\label{homoinva}
{ If $G\simeq H$,} then as Hopf $\Kbb$-algebras $\pi^1_x(G) \cong \pi^1_y(H)$.
\end{theorem}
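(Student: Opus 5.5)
Let $f\colon G\to H$ and $g\colon H\to G$ be homotopy inverses, so $g\circ f\simeq \Id_G$ and $f\circ g\simeq \Id_H$. Homotopies between digraph maps need not preserve base points, so we work with unbased maps and use the change-of-base-point isomorphisms to compare. Without loss of generality assume $G$ and $H$ are connected; otherwise restrict to the components containing $x$ and $y$, which are again homotopy equivalent via $f,g$. By the previous lemma, $\pi^1_y(H)\cong\pi^1_{f(x)}(H)$ as Hopf algebras. It therefore suffices to show that
\[
f^*\colon \pi^1_{f(x)}(H)\to \pi^1_x(G)
\]
is an isomorphism of Hopf $\Kbb$-algebras. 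Here $f^*$ is the pullback of Proposition \ref{prop:dual-map}, which by Theorem \ref{thm:PQ-func} is a Hopf morphism $\Qcal_{f(x)}(H)\to\Qcal_x(G)$.

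The first step is to check that $f^*$ maps $\pi^1_{f(x)}(H)$ into $\pi^1_x(G)$. If $\alpha\simeq_C\alpha'$ are loops at $x$, then $f\circ\alpha\simeq_C f\circ\alpha'$, since composing the map $F\colon C_h\to G$ with $f$ gives a digraph map $C_h\to H$. By the adjointness of Proposition \ref{prop:dual-map}, for $\oint u\in\pi^1_{f(x)}(H)$ we get $(f^*u,\alpha)=(u,f_*\alpha)=(u,f_*\alpha')=(f^*u,\alpha')$. So $f^*$ restricts to a Hopf morphism on $\pi^1$, and it is dual to $f_*\colon\pi_1(G,x)\to\pi_1(H,f(x))$. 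The same reasoning applies to $g^*$.

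Next apply Lemma \ref{lem:pi1-homotopy-commutes} to the homotopy $g\circ f\simeq\Id_G$, with $\gamma(t)=F(x,t)$ running from $gf(x)$ to $x$. This gives $(g\circ f)^*=\Id^*\circ\gamma^*=\gamma^*$ as maps $\pi^1_x(G)\to\pi^1_x(G)$, after identifying via the base-change isomorphism. By Corollary \ref{cor:covariant}, $(g\circ f)^*=f^*g^*$, so $f^*g^*=\gamma^*$, which is an isomorphism by the change-of-base-point lemma. Similarly $g^*f^*=\delta^*$ for the path $\delta$ produced by $f\circ g\simeq \Id_H$. Hence $f^*$ is surjective and $g^*$ is injective (from the first identity), and $f^*$ is injective (from the second). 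So $f^*\colon\pi^1_{f(x)}(H)\to\pi^1_{gf(x)}(G)$ is a bijective Hopf morphism. Its inverse is automatically a Hopf morphism, since it preserves the product, coproduct, unit and counit, and therefore also the antipode.

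The main obstacle is bookkeeping base points. The maps $g^*$ land at base point $gf(x)$, not $x$, so the composites must be read as $\pi^1_{gf(x)}(G)\to\pi^1_x(G)$ and so on, with each $\gamma^*$ mediating. I would track this carefully and verify that Lemma \ref{lem:pi1-homotopy-commutes} applies verbatim with $f$ replaced by $g\circ f$ and $g$ by $\Id_G$. Its proof rests on the claim that $F$ induces a $C_\partial$-homotopy $\gamma^{-1}\star(g f\circ\alpha)\star\gamma\simeq_C\alpha$ for loops $\alpha$; this is where the real content lies, and I would invoke it as stated. The final isomorphism $\pi^1_x(G)\cong\pi^1_{f(x)}(H)\cong\pi^1_y(H)$ then follows by composing with the change-of-base-point isomorphism in $H$.
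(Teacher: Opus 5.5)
Your route is the paper's: apply Lemma~\ref{lem:pi1-homotopy-commutes} to the homotopies $g\circ f\simeq\Id_G$ and $f\circ g\simeq\Id_H$ to see that $(g\circ f)^*$ and $(f\circ g)^*$ are isomorphisms up to base change, then deduce that $f^*$ is one. The paper stops after the first half. Your check that $f^*$ preserves $\pi^1$, and your explicit final deduction, are therefore additions, but the deduction needs one repair.

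\textbf{The injectivity step.} Write out base points. The first identity is
$f^*\circ g^*\colon \pi^1_{gf(x)}(G)\to\pi^1_{f(x)}(H)\to\pi^1_x(G)$.
The second, taken at the base point $f(x)$ of $H$, is
$g^*\circ f^*\colon \pi^1_{fgf(x)}(H)\to\pi^1_{gf(x)}(G)\to\pi^1_{f(x)}(H)$.
So the second identity gives injectivity of $f^*$ at the base point $gf(x)$. It does not give injectivity of the map $f^*\colon\pi^1_{f(x)}(H)\to\pi^1_x(G)$ you need. Choosing a different base point in $H$ does not help directly, since that would require some $y'$ with $g(y')=x$, and $x$ need not lie in the image of $g$.

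The fix is short. The two identities show that the single map $g^*\colon\pi^1_{gf(x)}(G)\to\pi^1_{f(x)}(H)$ is injective (first identity) and surjective (second identity), hence bijective. Then $f^*=(g\circ f)^*\circ(g^*)^{-1}$ is bijective.

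\textbf{Smaller points.}
\begin{itemize}
\item This $f^*$ lands in $\pi^1_x(G)$, not in $\pi^1_{gf(x)}(G)$ as you wrote at the end.
\item With your orientation of $\gamma$ (from $gf(x)$ to $x$), the lemma yields $(g\circ f)^*\circ\gamma^*=\Id$ rather than $(g\circ f)^*=\gamma^*$. This is harmless, since either way $(g\circ f)^*$ is an isomorphism.
\item Your reduction to connected components is not valid in general. The map $f$ need not carry the component of $x$ to the component of $y$, and for disconnected digraphs with arbitrary $x,y$ the conclusion can fail. The reduction is also unnecessary: Section 5 assumes $G$ connected, and then $H$ is connected too, because each $z\in H_0$ is joined to $fg(z)\in f(G_0)$ by the track of the homotopy.
\end{itemize}
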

\begin{proof}
From the definition of homotopy equivalence of digraphs, there exist line digraphs $I_n$ and $I_m$, together with digraph maps
$f\colon G\to H$, $g\colon H\to G$, $F_G\colon G\Box I_n\to G$, and $F_H\colon H\Box I_m\to H$, such that
$F_G(0)=\mathrm{id}_{G}$, $F_H(0)=\mathrm{id}_{H}$, $F_G(n)=g\circ f$, and $F_H(m)=f\circ g$.
Applying Lemma~\ref{lem:pi1-homotopy-commutes} to $F_G$ and $F_H$, we obtain that both
$(g\circ f)^{*}$ and $(f\circ g)^{*}$ are isomorphisms of Hopf $\mathbb{K}$-algebras.
\end{proof}

The previous result may be reformulated as follows.
\begin{theorem}\label{thm:pi-func}
The assignment
\[
\pi^1 \colon  \underline{\hBDig} \to \underline{\mathcal{H}},
\qquad (G_0, G_1, x) \mapsto \pi^1_x(G),
\]
defines a contravariant functor from the homotopy category of based digraphs to the category of Hopf $\Kbb$-algebras.
\end{theorem}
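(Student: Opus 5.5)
To prove Theorem~\ref{thm:pi-func} I will build the functor on objects and on morphisms, check that it is well defined on homotopy classes, and then verify the functor axioms.

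\textbf{Objects.} For a based digraph $G^*=(G_0,G_1,x)$ set $\pi^1(G^*)=\pi^1_x(G)$. This is a Hopf $\Kbb$-algebra by Theorem~\ref{thm:pi1-hopf}.

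\textbf{Morphisms.} Take a based digraph map $f\colon G^*\to H^*$ with $f(x)=y$. Theorem~\ref{thm:PQ-func} already gives a Hopf algebra morphism $f^*\colon\Qcal_y(H)\to\Qcal_x(G)$. I need it to restrict to $\pi^1_y(H)\to\pi^1_x(G)$, and I will use adjointness (Proposition~\ref{prop:dual-map}) for this.

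Let $\oint_y u\in\pi^1_y(H)$ and let $\alpha\simeq_C\alpha'$ be loops at $x$. The key fact is that $f\circ\alpha\simeq_C f\circ\alpha'$. To see it, compose a one-step $C_\partial$-homotopy $F\colon C_h\to G$ (or $C_h^-\to G$) with $f$. Since $f$ is a digraph map, $f\circ F$ is again a digraph map, so it is a one-step $C_\partial$-homotopy from $f_*\alpha$ to $f_*\alpha'$. Then
\[
(f^*\oint_y u,\alpha)=(\oint_y u,f_*\alpha)=(\oint_y u,f_*\alpha')=(f^*\oint_y u,\alpha').
\]
Hence $f^*\oint_y u\in\pi^1_x(G)$. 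The restriction commutes with $\mu,\eta,\Delta,\epsilon,j$ because these structures on $\pi^1$ are restrictions of those on $\Qcal$. So the restricted map is a Hopf morphism.

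\textbf{Functor axioms.} Contravariance, $(g\circ f)^*=f^*g^*$, is Corollary~\ref{cor:covariant} restricted to $\pi^1$. For the identity, $\Id^*=\Id$ holds trivially because $\Id_*\alpha=\alpha$.

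\textbf{Independence of representative.} The main point is that $f^*$ depends only on the based homotopy class of $f$. Let $f\simeq g$ be based maps via $F\colon G\Box I_n\to H$ with $F(x,t)=y$ for all $t$. Then the path $\gamma(t)=F(x,t)$ is constant at $y$, so it is elementarily equivalent to the trivial path by Corollary~\ref{trivial}. Consequently $\gamma_*$ is the identity on $\bar L_y(H)$, and hence $\gamma^*=\Id$. Lemma~\ref{lem:pi1-homotopy-commutes} then gives $f^*=g^*\circ\gamma^*=g^*$ on $\pi^1_y(H)$. Therefore the assignment $[f]\mapsto f^*$ is well defined on $\underline{\hBDig}$.

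\textbf{Main obstacle.} The delicate step is this last one, and it depends on the convention used for morphisms in $\underline{\hBDig}$. If based homotopies are not required to fix the basepoint, then $\gamma$ is a genuine path from $y$ to itself and need not be trivial, so the argument above fails. In that case I would fall back on the general form of Lemma~\ref{lem:pi1-homotopy-commutes}, which gives $f^*=g^*\circ\gamma^*$ with $\gamma^*$ a Hopf isomorphism. I would then have to restrict to the basepoint-fixed notion of based homotopy to obtain strict functoriality, as is standard.

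I would also need to justify that Lemma~\ref{lem:pi1-homotopy-commutes} applies, since it rests on the underlying statement $\gamma_*\circ f_*=g_*$ on $\pi_1$. This requires the homotopy square $f\alpha\,\gamma\simeq_C\gamma\, g\alpha$ in $H$, which follows from decomposing $I_k\Box I_n$ into squares. I take that lemma as given.
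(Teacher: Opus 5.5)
Your proposal is correct and takes essentially the paper's route. The paper gives no separate argument and treats the theorem as a reformulation of Theorem~\ref{thm:PQ-func}, Theorem~\ref{thm:pi1-hopf} and Lemma~\ref{lem:pi1-homotopy-commutes}; you assemble the same ingredients, while supplying two points the paper leaves implicit: that $f^*$ maps $\pi^1_{f(x)}(H)$ into $\pi^1_x(G)$ because $f\circ F$ is again a one-step $C_\partial$-homotopy, and that well-definedness on $\underline{\hBDig}$ needs basepoint-fixing homotopies, so $\gamma$ is constant and $\gamma^*=\Id$. One harmless slip: the lemma reads $f^*\circ\gamma^*=g^*$, with $\gamma^*\colon\pi^1_{g(x)}(H)\to\pi^1_{f(x)}(H)$, rather than $f^*=g^*\circ\gamma^*$; this makes no difference here since $\gamma^*=\Id$.
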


\end{document}